\newtheorem{thm}{Theorem}[section]
\theoremstyle{definition} 
\newtheorem{example}{Example}
\newtheorem{lem}[thm]{Lemma}
\newtheorem{prp}[thm]{Proposition}
\newtheorem{rem}[thm]{Remark}
\definecolor{wco}{rgb}{0.5,0.2,0.3}
\numberwithin{equation}{section} \theoremstyle{remark}
\newcommand{\F}{\mathcal{F}}
\title{{\bf On the infinite time horizon approximation for  McKean-Vlasov SDEs driven by L\'evy processes with common noise}
	\footnote{Supported in
		part by  NNSFC (No.12101390, No.12426656). }}
\author{Ke Xu$^{a,b}$, Fen-Fen Yang$^{a,b}$, Chenggui Yuan$^{c}$ \\	
	{\small $^a$ Department of Mathematics, Shanghai University, Shanghai 200444, China}\\
	{\small $^b$ Newtouch Center for Mathematics, Shanghai University, Shanghai, 200444, China}\\
	{\small $^c$  Department of Mathematics, Swansea University, Bay campus, SA1 8EN, UK }\\
	{\small  xuke@shu.edu.cn; yangfenfen@shu.edu.cn; c.yuan@swansea.ac.uk}}	
\date{}
\begin{document}
	\allowdisplaybreaks
	\def\d{\text{{\rm{d}}}}
	\maketitle
	
	\begin{abstract}
		In this work, we establish the existence and uniqueness of solutions to McKean-Vlasov stochastic differential equations (SDEs) driven by L\'evy processes with common noise, by means of a contraction mapping principle in the space of probability measures. In addition, on the infinite time horizon we analyse the propagation of chaos for  McKean-Vlasov SDEs driven by L\'evy processes in the presence of common noise, and provide numerical simulations to validate the corresponding model.
	\end{abstract}
	\textbf{Keywords:} McKean-Vlasov equations; L\'evy processes; Common noise; Propagation of chaos; Numerical simulation
	\section{Introduction}
	Let $(\Omega, \mathcal{F}, \mathbb{P})$ be a complete probability space. Consider the $d$-dimensional stochastic process $X = (X_t)_{t \geq 0}$ satisfying the following conditional McKean-Vlasov stochastic differential equation (SDE) with jumps given by:
	\begin{equation}\label{eq:MKV-SDE}
		\begin{aligned}
			\mathrm{d}X(t) &= b\bigl(X(t), \mathcal{L}^1(X(t))\bigr)  \mathrm{d}t + \sigma\bigl(X(t), \mathcal{L}^1(X(t))\bigr)  \mathrm{d}W_t \\
			&\quad + c\bigl(X(t), \mathcal{L}^1(X(t))\bigr)  \mathrm{d}Z_t + \sigma^{0}\bigl(X(t), \mathcal{L}^1(X(t))\bigr)  \mathrm{d}W^{0}_t,
		\end{aligned}
	\end{equation}
	for $t \geq 0$, with initial value $X_0=x_0$. Here $\mathcal{L}^1(X(t))$ provides a version of the conditional law of $X(t)$ given $W^0$, $W = (W_t)_{t \geq 0}$ is a $d$-dimensional standard Brownian motion.
	Denote $\mathbb{R}_0^d := \mathbb{R}^d \setminus \{0\}$ and let $\mathcal{B}(\mathbb{R}_+ \times \mathbb{R}_0^d)$ be the Borel $\sigma$-algebra on $\mathbb{R}_+ \times \mathbb{R}_0^d$, and
	\[
	Z_t = \int_0^t \int_{\mathbb{R}_0^d} z  \widetilde{N}(\mathrm{d}s, \mathrm{d}z), \quad t \geq 0,
	\]
	where $\widetilde{N}(\mathrm{d}t, \mathrm{d}z) := N(\mathrm{d}t, \mathrm{d}z) - \nu(\mathrm{d}z) \mathrm{d}t$ is the compensated Poisson random measure associated with the Poisson random measure $N$ on $(\mathbb{R}_+ \times \mathbb{R}_0^d, \mathcal{B}(\mathbb{R}_+ \times \mathbb{R}_0^d))$ with intensity measure $\nu(\mathrm{d}z) \mathrm{d}s$. Explicitly, for $t > 0$ and $A \in \mathcal{B}(\mathbb{R}_0^d)$,
	\[
	N([0, t] \times A) := \sum_{s \in (0,t]} \mathbf{1}_{\{ \Delta Z_s \in A \}},
	\]
	with the jump size of $Z$ at time $s$ defined by
	\begin{equation*}
		\Delta Z_s := 
		\begin{cases} 
			Z_s - Z_{s-}, & s > 0, \\
			0, & s = 0,
		\end{cases}
		\quad \text{where} \quad Z_{s-} := \lim_{u \uparrow s} Z_u.
	\end{equation*}
	Moreover $Z = (Z_t)_{t \geq 0}$ is a $d$-dimensional centered pure jump L\'evy process independent of $W$, whose L\'evy measure $\nu$ satisfies 
	\[
	\int_{\mathbb{R}^d} (1 \wedge |z|^2)  \nu(\mathrm{d}z) < +\infty.
	\]
	Let $|\cdot|$ denote the Euclidean norm on $\mathbb{R}^d$, and let $(\mathcal{F}^0_t)_{t \ge 0}$, $(\mathcal{F}^1_t)_{t \ge 0}$ be two complete filtered probability spaces satisfying the usual condition.
	Let $(W_t^0)_{t \ge 0}$ and $(W_t)_{t \ge 0}$ be independent $d$-dimensional Brownian motions defined on $(\Omega^0, \mathcal{F}^0, \mathbb{P}^0)$ and $(\Omega^1, \mathcal{F}^1, \mathbb{P}^1)$, respectively.
	We construct the product probability space $(\Omega, \mathcal{F}, \mathbb{P})$ by setting
	\[
	\Omega := \Omega^0 \times \Omega^1, \quad 
	(\mathcal{F}, \mathbb{P}) := \text{completion of } (\mathcal{F}^0 \otimes \mathcal{F}^1,\, \mathbb{P}^0 \otimes \mathbb{P}^1),
	\]
	and equipping it with the filtration $(\mathcal{F}_t)_{t \ge 0}$ obtained as the complete, right-continuous augmentation of $(\mathcal{F}^0_t \otimes \mathcal{F}^1_t)_{t \ge 0}$.  
	An element $\omega \in \Omega$ means $\omega = (\omega^0, \omega^1)$ with $\omega^0 \in \Omega^0$ and $\omega^1 \in \Omega^1$. Expectations with respect to $\mathbb{P}^0$ and $\mathbb{P}^1$ are denoted by $\mathbb{E}^0$ and $\mathbb{E}^1$, respectively.  
	For $i \in \{1,2\}$, let $(\Omega_i^1, \mathcal{F}_i^1, \mathbb{P}_i^1)$ be independent copies of $(\Omega^1, \mathcal{F}^1, \mathbb{P}^1)$, with $\mathbb{E}^1_i$ denoting expectation under $\mathbb{P}_i^1$.  
	We write $\Omega_i := \Omega^0 \times \Omega_i^1$ for the corresponding copies of $\Omega$.
	Denote by $\mathcal{P}(\mathbb{R}^d)$ the set of all Borel probability measures on $\mathbb{R}^d$ endowed with the topology of weak convergence.  
	For $k>0$, define
	\[
	\mathcal{P}_k(\mathbb{R}^d) := \left\{ \mu \in \mathcal{P}(\mathbb{R}^d) : \int_{\mathbb{R}^d} |x|^k\, \mu(\mathrm{d}x) < \infty \right\}.
	\]
	For $\mu, \nu \in \mathcal{P}_k(\mathbb{R}^d)$, the $k$-Wasserstein distance is given by
	\[
	\mathcal{W}_k(\mu, \nu) := \inf_{\pi \in \Pi(\mu, \nu)} \left( \int_{\mathbb{R}^d \times\mathbb{R}^d} |x - y|^k\, \pi(\mathrm{d}x, \mathrm{d}y) \right)^{\frac{1}{k \vee 1}},
	\]
	where $\Pi(\mu,\nu)$ denotes the set of all couplings of $\mu$ and $\nu$.  
	It is known that $(\mathcal{P}_k(\mathbb{R}^d), \mathcal{W}_k)$ forms a Polish metric space. We denote $\delta_0$ the Dirac measure concentrated at the origin in $\mathbb{R}^d$. 
	Let $\xi$ be a random variable on $\Omega$ measurable with respect to $\mathcal{F}^0 \otimes \mathcal{F}^1$.  
	For any fixed $\omega^0 \in \Omega^0$, the mapping
	\[
	\Omega^1 \ni \omega^1 \longmapsto \xi(\omega^0, \omega^1)
	\]
	defines a random variable on $(\Omega^1, \mathcal{F}^1, \mathbb{P}^1)$, whose distribution is denoted by $\mathcal{L}(\xi(\omega^0, \cdot))$.  
	Given a random variable $\xi$ on $(\Omega, \mathcal{F}, \mathbb{P})$, we define
	\[
	\mathcal{L}^1(\xi) := \mathcal{L}(\xi(\omega^0, \cdot)),
	\]
	which represents the conditional distribution of $\xi$ given $\mathcal{F}^0$.  
	In particular, if the initial condition $X(0)$ in \eqref{eq:MKV-SDE} is assumed to be defined on $(\Omega^1, \mathcal{F}^1, \mathbb{P}^1)$, then $\mathcal{L}^1(X(t))$ provides a version of the conditional law of $X(t)$ given $W^0$.  
	Henceforth, we shall restrict our attention to initial conditions living on $(\Omega^1, \mathcal{F}^1, \mathbb{P}^1)$. 
	
	The existence and uniqueness theory for strong solutions of McKean-Vlasov SDEs under Lipschitz and linear growth conditions (with respect to both the state variable and the measure) is well-established (see, e.g., \cite{Meleard1996}, \cite{Sznitman1991}). Wang~\cite{Wang2018} establishes well-posedness via the Picard iteration method, resolving the pathwise intractability in the case of SDEs driven by Brownian motion. In the context of delay McKean-Vlasov SDEs with super-linear drift, Bao et al.~\cite{Bao20242437} propose a Milstein-based antithetic MLMC framework attaining optimal complexity without L\'evy area simulation under Brownian noise. Liu et al.~\cite{LWZ2021} establish concentration inequalities, exponential convergence in the Wasserstein metric $\mathcal{W}_1$, and uniform-in-time propagation of chaos for McKean-Vlasov systems via reflection coupling and new cost functions, with applications to multi-well confinement and bounded interaction potentials.
	
	Compared with McKean-Vlasov SDEs driven by Brownian motion, the McKean-Vlasov SDEs driven by L\'evy processes have been much less studied. Bao and Wang~\cite{Bao2025} establish existence, uniqueness, and multiplicity of stationary distributions for  McKean-Vlasov SDEs driven by L\'evy processes. Song et al.~\cite{Liu2023} develop large and moderate deviation principles, along with explicit rate functions, for  McKean-Vlasov SDEs driven by L\'evy processes using a weak-convergence approach. In \cite{Jourdain2008}, Jourdain et al. investigate the existence, uniqueness, and particle approximations for McKean-Vlasov SDEs driven by L\'evy processes. More recently, Mehri et al.~\cite{Mehri2020} and Neelima et al.~\cite{Neelima2020} address the well-posedness and propagation of chaos results for McKean-Vlasov SDEs with delay driven by L\'evy noise. Bao et al.~\cite{Bao20254} establish strong well-posedness of McKean-Vlasov SDEs driven by L\'evy processes under weak monotonicity and coercivity, prove both weak and strong propagation of chaos, and extend the results to systems with common noise.
	
	The McKean-Vlasov SDEs driven by Brownian motion with common noise have been applied to various fields including finance, physics, biology, mean-field games, and machine learning. The fundamental theory of the McKean-Vlasov SDEs with common noise is well established. One can refer to Carmona and Delarue~\cite{Carmona2018}, Kumar et al.~\cite{Kumar2020} for the well-posedness and conditional propagation of chaos, Hammersley et al.~\cite{Hammersley2021} for the weak well-posedness, Maillet~\cite{Maillet2023}, Bao and Wang~\cite{Bao2024} for the long-time behavior. Yuan et al.~\cite{Chen2025105993} design delay feedback control strategies for stabilizing McKean-Vlasov SDEs with common noise, proving the global solution existence and uniqueness.
	Shao and Wei~\cite{Shao2021} study the well-posedness and conditional propagation of chaos for McKean-Vlasov SDEs with common noise, establishing explicit convergence rates.
	
In applications, random jump shocks naturally arise in a variety of systems. 
For instance, an epidemic outbreak occurs when a contagious disease spreads rapidly through a population due to increased transmission rates. 
Super-spreader events may cause sudden surges in infection numbers, which can be modelled by a jump component in a diffusion-type dynamics for the infected population. 
When global mobility is high, localized outbreaks can quickly propagate across regions, resulting in a pandemic, and the epidemic dynamics are then subject to system-wide sources of randomness (e.g., environmental factors, global travel) that impact all communities. Similar large-scale effects appear in economics and finance. 
In systemic risk modelling, the balance sheet of each bank is influenced not only by its idiosyncratic shocks but also by macroeconomic factors and market-wide events. 
This can be described by McKean-Vlasov SDEs with common noise and jumps, where the mean-field term captures peer effects and the jump component models sudden losses or liquidity dry-ups; see the interbank lending model studied later. 
Bistable dynamics with random environment, as in a Ginzburg-Landau type equation, provide a prototype for metastable transitions under heavy-tailed shocks, and will serve as our first test model. 
In conditional mean-field control problems, agents interact through conditional expectations of their states while being exposed to common noise representing shared information or policy shocks. 
Linear-quadratic structures then lead to tractable feedback controls and motivate the LQ test models considered in Section~5.
These examples motivate the study of McKean-Vlasov SDEs driven by L\'evy processes with common noise; see, for instance, \cite{BWWY2025,BWWY2024}. 
In recent years, significant advances have been achieved for McKean-Vlasov SDEs driven by L\'evy processes under common noise, covering topics such as well-posedness, deep learning approaches, optimal stopping problems, optimal and impulse control, as well as stochastic maximum principles; see, among others, \cite{AR2025,AO2024,AO2023,BWWY2025,BWWY2024,Do2024,Erny2022,Huang2025,HR2024,McKean1966} for further references.

	This paper investigates the infinite time horizon approximation for McKean-Vlasov SDEs driven by L\'evy processes with common noise. The primary contribution of this study lies in addressing a research gap by extending previous work to McKean-Vlasov SDE driven by L\'evy processes, where the drift and diffusion coefficients are not globally Lipschitz continuous. Tran et al.~\cite{Tran2025} propose a tamed-adaptive Euler-Maruyama scheme for McKean-Vlasov SDEs driven by L\'evy processes with superlinear coefficients, proving strong convergence on finite and infinite horizons. In particular, this study builds upon the results of Tran et al.~\cite{Tran2025} by introducing the effect of common noise. This factor has not been examined in the context of such SDEs. The incorporation of common noise enhances the model, providing a more realistic framework for situations where noise simultaneously impacts all components, a consideration that has often been neglected in prior research.
	
	The main contributions of this article are reflected in the following three aspects. 
	
	\begin{enumerate}
		\item Establishing existence, uniqueness, and moment estimates for solutions under non-globally Lipschitz coefficients (assumptions \textbf{(A1)-(A7)}), generalizing results in \cite{ {Kumar2020},{Neelima2020},{Shao2024}}.
		
		\item We extend the results of Tran et al.~\cite{Tran2025} to establish the boundedness of high-order moments of the solutions and demonstrate the propagation of chaos in the presence of common noise.
		
		\item We apply the theoretical proofs presented in this paper to practical examples and provide corresponding numerical analysis and results.
	\end{enumerate}
	
	The remainder of the paper is organized as follows. In Section 2, we establish the well-posedness of solutions to McKean-Vlasov SDE driven by L\'evy process with common noise, including existence, uniqueness, and second-order moment estimates under non-globally Lipschitz coefficients. In Section 3, we investigate the propagation of chaos for the associated particle systems. In Section 4, we present the corresponding numerical analysis of the McKean-Vlasov SDE driven by L\'evy process with common noise. In Section 5, we provide specific examples to conduct numerical simulations.
	\section{Well-posedness of Solutions}
	To establish the well-posedness of \eqref{eq:MKV-SDE}, we assume that the drift, diffusion and jump coefficients \( b \), \( \sigma \), \(\sigma^{0}\), \( c \) and the L\'evy measure \( \nu \) satisfy the following conditions. 
	\vspace*{0.5cm}
	\\
	\textbf{(A1)} There exists a positive constant \( L \) such that
	\[
	2 \langle x, b(x, \mu) \rangle + |\sigma(x, \mu)|^2 + |\sigma^{0}(x, \mu)|^2 + |c(x, \mu)|^2 \int_{\mathbb{R}^d_{0}} |z|^2 \, \nu({{\rm{d}}}z) \leq L(1 + |x|^2 + \mathcal{W}_2^2(\mu, \delta_0)
	),
	\]
	for any \( x\in \mathbb{R}^d \) and \( \mu\in \mathcal{P}_2(\mathbb{R}^d) \).\\
	\textbf{(A2)} There exist constants \( L_1 \in \mathbb{R} \) and \( L_2 \geq 0 \) such that 
	\begin{align*}
		&2 \langle x - \overline{x}, b(x, \mu) - b(\overline{x}, \overline{\mu}) \rangle + |\sigma(x, \mu) - \sigma(\overline{x}, \overline{\mu})|^2 +  |\sigma^{0}(x, \mu) - \sigma^{0}(\overline{x}, \overline{\mu})|^2 \\
		&+ |c(x, \mu) - c(\overline{x}, \overline{\mu})|^2\int_{\mathbb{R}^d_{0}} |z|^2 \, \nu({{\rm{d}}}z)
		\leq L_1 |x -\overline{x}|^2 + L_2 \mathcal{W}_2^2(\mu, \overline{\mu}),
	\end{align*}for any \( x, \overline{x} \in \mathbb{R}^d \) and \( \mu, \overline{\mu} \in \mathcal{P}_2(\mathbb{R}^d) \).\\
	\textbf{(A3)} There exist constants \( \tilde{L} > 0 \) and \( \ell > 0 \) such that 
	\[
	|b(x, \mu) - b(\overline{x}, \overline{\mu})| \leq \tilde{L}(1 + |x|^\ell + |\overline{x}|^\ell) \left( |x -\overline{x}| + \mathcal{W}_2(\mu, \overline{\mu}) \right),
	\]for any \( x,\overline{x}\in \mathbb{R}^d \) and \( \mu, \overline{\mu} \in \mathcal{P}_2(\mathbb{R}^d) \).\\
	\textbf{(A4)} There exists an even integer \( p_0 \in [2, +\infty) \) such that
	\[
	\int_{|z| > 1} |z|^{2p_0}\, \nu({{\rm{d}}}z) < \infty \quad \text{and} \quad \int_{0 < |z| \leq 1} |z| \, \nu({{\rm{d}}}z) < \infty.
	\]\\
	\textbf{(A5)} There exists a positive constant \( L_3 \) such that 
	\[
	|c(x, \mu)| \leq L_3 \left( 1 + |x| + \mathcal{W}_2(\mu, \delta_0) \right),
	\]for any \( x \in \mathbb{R}^d \) and \( \mu \in \mathcal{P}_2(\mathbb{R}^d) \).\\
	\textbf{(A6)} For the even integer \( p_0 \in [2, +\infty) \) given in \textbf{(A4)} and the positive constant \( L_3 \) given in \textbf{(A5)}, there exist constants \( \gamma_1 \in \mathbb{R} \), \( \gamma_2 \geq 0 \), and \( \eta \geq 0 \) such that
	\begin{align*}
		&\langle x, b(x, \mu) \rangle + \frac{p_0 - 1}{2} (|\sigma(x, \mu)|^2 + |\sigma^{0}(x, \mu)|^2 )+ \frac{1}{2L_3} |c(x, \mu)|^2 
		\int_{\mathbb{R}^d_{0}}  |z| \left(\left( 1 + L_3 |z| \right)^{p_0 - 1} - 1 \right) \nu({{\rm{d}}}z) \\
		&\leq \gamma_1 |x|^2 + \gamma_2 \mathcal{W}_2^2(\mu, \delta_0) + \eta,
	\end{align*} for any \( x \in \mathbb{R}^d \) and \( \mu \in \mathcal{P}_2(\mathbb{R}^d) \).\\
		\textbf{(A7)} There exist constants \( \tilde{L}_1 \in \mathbb{R} \) and \( \tilde{L}_2 \geq 0 \) such that 
	\begin{align*}
		&|\sigma(x, \mu) - \sigma(\overline{x}, \overline{\mu})|^2 +  |\sigma^{0}(x, \mu) - \sigma^{0}(\overline{x}, \overline{\mu})|^2 + |c(x, \mu) - c(\overline{x}, \overline{\mu})|^2\int_{\mathbb{R}^d_{0}} |z|^2 \, \nu({{\rm{d}}}z)\\
		&\leq \tilde{L}_1 |x -\overline{x}|^2 + \tilde{L}_2 \mathcal{W}_2^2(\mu, \overline{\mu}),
	\end{align*}for any \( x, \overline{x} \in \mathbb{R}^d \) and \( \mu, \overline{\mu} \in \mathcal{P}_2(\mathbb{R}^d) \).
	\begin{rem}\label{rem}
		Similar with Tran et al.~\cite{Tran2025}. It follows from Assumption \textup{\textbf{(A6)}} that, for the even integer \( p_0 \in [2, +\infty) \) given in \textbf{(A4)} and the positive constant \( L_3 \) given in \textbf{(A5)}, there exist constants \( \gamma_1 \in \mathbb{R} \), \( \gamma_2 \geq 0 \), and \( \eta \geq 0 \) such that
		\begin{align*}
			&\langle x, b(x, \mu) \rangle + \frac{p - 1}{2} (|\sigma(x, \mu)|^2 + |\sigma^{0}(x, \mu)|^2 )+ \frac{1}{2L_3} |c(x, \mu)|^2 
			\int_{\mathbb{R}^d_{0}}  |z| \left(\left( 1 + L_3 |z| \right)^{p - 1} - 1 \right) \nu({{\rm{d}}}z) \\
			&\leq \gamma_1 |x|^2 + \gamma_2 \mathcal{W}_2^2(\mu, \delta_0) + \eta,
		\end{align*}
		for any \( p \in [2, p_0] \), \( x \in \mathbb{R}^d \) and \( \mu \in \mathcal{P}_2(\mathbb{R}^d) \).
	\end{rem}
	\begin{example}\label{vv}
		To illustrate that the aforementioned hypotheses are valid, we consider the following example: a variant of the Ginzburg-Landau equation. Specifically, consider		
		\[
		\begin{aligned}
			&{\rm d}X_t = 0.1\big(X_t - X_t^3 + m(\mu_t)\big)\,{\rm d}t + 0.1X_t\,{\rm d}W_t + 0.05X_t\,{\rm d}W^0_t + \sin(X_{t-})\,{\rm d}Z_t,\\
			& i.e. \quad b(x,\mu) = 0.1\big(x - x^3 + m(\mu)\big), \quad 
			\sigma(x,\mu) = 0.1x,\quad \sigma^{0}(x,\mu) = 0.05x, \quad 
			c(x,\mu) = \sin(x),
		\end{aligned}
		\]
		where $\mu_t = \mathcal L^1(X_t)$ denotes the conditional distribution of $X_t$, $m(\mu_t) = \int y\,\mu_t({\rm d}y)$ is its first moment, $W$ is a Brownian motion, $W^0$ a common Brownian motion, and $Z$ a pure-jump L\'evy process with L\'evy measure $\nu({\rm d}z)$. For the jump part we choose a tempered stable L\'evy measure on $\mathbb R_0$:
		\[\nu(\mathrm{d}z)=C_{\alpha,\lambda}\,|z|^{-1-\alpha}e^{-\lambda|z|}\,\mathrm{d}z,\qquad \alpha\in(0,1),\;\lambda>0,
		\]
    	where $C_{\alpha,\lambda}$ is determined by the normalization condition ensuring that $\nu$ is a valid L\'evy measure, i.e.
    	\[
    	C_{\alpha,\lambda} = \frac{\lambda^{\alpha}}{2\Gamma(-\alpha)},
    	\]
    	with $\Gamma(\cdot)$ denoting the Gamma function.
		Note that  $\Gamma(s)=\int_{0}^{\infty} t^{s-1} e^{-t}\,\mathrm{d}t$ is the Gamma function. By setting $s=q-\alpha$, we have
		\[
		\int_{\mathbb R_0}|z|^q\,\nu(\mathrm{d}z)
		= C_{\alpha,\lambda}\int_{\mathbb R_0} |z|^{q-1-\alpha} e^{-\lambda|z|}\,\mathrm{d}z
		=2C_{\alpha,\lambda}\,\lambda^{\alpha-q}\,\Gamma(q-\alpha),\qquad q>\alpha.
		\]
		For $q=2$, we obtain
		\[
		\int_{\mathbb R_0}|z|^2\,\nu(\mathrm{d}z)
		=2C_{\alpha,\lambda}\,\lambda^{\alpha-2}\,\Gamma(2-\alpha), \qquad 0<\alpha<1.
		\]					
		Next we verify that the above equation satisfies assumptions \textup{\textbf{(A1)-(A7)}}.
		\\
		\textbf{(1)}
		Since $b(x,\mu)=0.1(x-x^3+m(\mu))$, we have
		\begin{align*}
			2 \langle x, b(x, \mu) \rangle &=2x\cdot0.1\big(x-x^3+m(\mu)\big)=0.2\big(x^2-x^4+xm(\mu)\big).
		\end{align*}
	   Moreover,
		\[|\sigma(x,\mu)|^2=(0.1x)^2=0.01x^2,\qquad |\sigma^0(x,\mu)|^2=(0.05x)^2=0.0025x^2.
		\]		
		Also we have $|c(x,\mu)|^2\int_{\mathbb R_0}|z|^2\,\nu(\mathrm{d}z)=\sin^2(x)\cdot 2C_{\alpha,\lambda}\,\lambda^{\alpha-2}\,\Gamma(2-\alpha)\le 2C_{\alpha,\lambda}\,\lambda^{\alpha-2}\,\Gamma(2-\alpha)$ and $m(\mu)^2\le\mathcal W_2^2(\mu,\delta_0).$ Thus, we have
		\begin{align*}
			&2 \langle x, b(x, \mu) \rangle+|\sigma(x,\mu)|^2+|\sigma^0(x,\mu)|^2+|c(x,\mu)|^2\int_{\mathbb R_0}|z|^2\,\nu(\mathrm{d}z)\\
			&\quad=0.2(x^2-x^4+xm(\mu))+0.0125x^2+\sin^2(x)\cdot 2C_{\alpha,\lambda}\,\lambda^{\alpha-2}\,\Gamma(2-\alpha)\\
			&\quad\le 0.2x^2+0.0125x^2+0.2|x||m(\mu)|+2C_{\alpha,\lambda}\,\lambda^{\alpha-2}\,\Gamma(2-\alpha)\\
			&\quad \le (0.2125+0.1)x^2+0.1\mathcal W_2^2(\mu,\delta_0)+2C_{\alpha,\lambda}\,\lambda^{\alpha-2}\,\Gamma(2-\alpha)\\
			&\quad =0.3125 x^2+0.1\mathcal W_2^2(\mu,\delta_0)+2C_{\alpha,\lambda}\,\lambda^{\alpha-2}\,\Gamma(2-\alpha).
		\end{align*}
		Letting $L=\max\{0.3125,0.1,2C_{\alpha,\lambda}\,\lambda^{\alpha-2}\,\Gamma(2-\alpha)\}+1$, we show that the Assumption \textbf{(A1)} holds.\\
		\textbf{(2)}
		Note that
		\[|\sigma(x)-\sigma(\overline x)|^2=(0.1)^2|x-\overline x|^2=0.01|x-\overline x|^2,\]
		\[|\sigma^0(x)-\sigma^0(\overline x)|^2=(0.05)^2|x-\overline x|^2=0.0025|x-\overline x|^2.
		\]
		For the jump coefficient, use $|\sin x-\sin y|\le|x-y|$, so
		
		\[|c(x)-c(\overline x)|^2\int_{\mathbb R_0}|z|^2\,\nu(\mathrm{d}z)\le 2C_{\alpha,\lambda}\,\lambda^{\alpha-2}\,\Gamma(2-\alpha)|x-\overline x|^2.
		\]
		Hence,
		\begin{align*}
			&2 \langle x - \overline{x}, b(x, \mu) - b(\overline{x}, \overline{\mu}) \rangle\\
			&\quad=2(x-\overline x)\cdot0.1\big((x-x^3)-(\overline x-\overline x^3)+m(\mu)-m(\overline\mu)\big)\\
			&\quad=0.2 (x-\overline x)\big((x-x^3)-(\overline x-\overline x^3)\big)+0.2(x-\overline x)(m(\mu)-m(\overline\mu)).
		\end{align*}
		Using the algebraic identity	
		\((x-x^3)-(\overline x-\overline x^3)=(x-\overline x)\big(1-(x^2+x\overline x+\overline x^2)\big),
		\)
		we have
		\[0.2 (x-\overline x)\big((x-x^3)-(\overline x-\overline x^3)\big)=0.2 |x-\overline x|^2\big(1-(x^2+x\overline x+\overline x^2)\big)\le 0.2|x-\overline x|^2.
		\]
		Moreover,
		\[0.2(x-\overline x)(m(\mu)-m(\overline\mu))\le 0.1|x-\overline x|^2+0.1|m(\mu)-m(\overline\mu)|^2.
		\]		
		Combining the above bounds gives
		\begin{align*}
			&2 \langle x - \overline{x}, b(x, \mu) - b(\overline{x}, \overline{\mu}) \rangle+|\sigma(x,\mu)-\sigma(\overline x,\overline\mu)|^2+|\sigma^0(x,\mu)-\sigma^0(\overline x,\overline\mu)|^2\\
			&\le \big(0.2+0.1+0.01+0.0025+2C_{\alpha,\lambda}\,\lambda^{\alpha-2}\,\Gamma(2-\alpha)\big)|x-\overline x|^2+0.1|m(\mu)-m(\overline\mu)|^2\\
			&=\Big(0.3125+2C_{\alpha,\lambda}\,\lambda^{\alpha-2}\,\Gamma(2-\alpha)\Big)|x-\overline x|^2+0.1|m(\mu)-m(\overline\mu)|^2.
		\end{align*}
		Since $|m(\mu)-m(\overline\mu)|\le\mathcal W_2(\mu,\overline\mu)$, we may choose
		
		\[L_1:=0.3125+2C_{\alpha,\lambda}\,\lambda^{\alpha-2}\,\Gamma(2-\alpha),\qquad L_2:=0.1.
		\]		
		Hence Assumption \textbf{(A2)} holds with these constants.\\
		\textbf{(3)}
		Due to
		\begin{align*}
			|b(x,\mu)-b(\overline x,\overline\mu)|&=0.1\big|(x-x^3)-(\overline x-\overline x^3)+m(\mu)-m(\overline\mu)\big|\\
			&\le 0.1\big| (x-x^3)-(\overline x-\overline x^3)\big|+0.1|m(\mu)-m(\overline\mu)|.
		\end{align*}
		Applying the mean-value theorem: there exists $\xi$ between $x$ and $\overline x$ such that		
		\[|(x-x^3)-(\overline x-\overline x^3)|=|x-\overline x| |1-3\xi^2|\le |x-\overline x|(1+3\xi^2) \le |x-\overline x|(1+3(|x|^2+|\overline x|^2)).
		\]		
		Thus
		\[|b(x,\mu)-b(\overline x,\overline\mu)|\le 0.1\big(1+3(|x|^2+|\overline x|^2)\big)|x-\overline x|+0.1\mathcal W_2(\mu,\overline\mu).
		\]
	This implies that Assumption \textbf{(A3)} holds with $\ell=2$.\\
		\textbf{(4)}
		Note that  $\Gamma(s,\lambda)=\int_{\lambda}^{\infty} t^{s-1} e^{-t}\,\mathrm{d}t$ is the upper incomplete Gamma function. 	When $2p_0>\alpha$, we have
		\begin{align*}
			\int_{|z|>1}|z|^{2p_0}\,\nu(\mathrm{d}z)
			&=2C_{\alpha,\lambda}\int_{1}^{\infty} r^{2p_0-1-\alpha} e^{-\lambda r}\,\mathrm{d}r\\
			&=2C_{\alpha,\lambda}\,\lambda^{\alpha-2p_0}\,\Gamma(2p_0-\alpha,\lambda).
		\end{align*}
		Similarly, due to $\gamma(s,\lambda)=\int_{0}^{\lambda} t^{s-1}e^{-t}\,\mathrm{d}t$ is the lower incomplete Gamma function. When $\alpha<1$, we have
		\begin{align*}
			\int_{0<|z|\le 1}|z|\,\nu(\mathrm{d}z)
			&=2C_{\alpha,\lambda}\int_{0}^{1} r^{-\alpha} e^{-\lambda r}\,\mathrm{d}r\\
			&=2C_{\alpha,\lambda}\,\lambda^{\alpha-1}\,\gamma(1-\alpha,\lambda).
		\end{align*}
		Thus Assumption \textbf{(A4)} holds.\\
		\textbf{(5)}
		Since $|\sin x|\le 1\le 1+|x|+\mathcal W_2(\mu,\delta_0)$ for all $x,\mu$, it's easy to see that Assumption \textbf{(A5)} holds.\\
		\textbf{(6)}
		Since $b(x,\mu)=0.1(x-x^3+m(\mu))$,
		\[\langle x, b(x, \mu) \rangle=0.1\big(x^2-x^4+xm(\mu)\big)\le 0.1x^2+0.1|x||m(\mu)|\le 0.1x^2+0.05x^2+0.05\mathcal W_2^2(\mu,\delta_0).\]
		Next,
		\begin{align*}
			\frac{p_0-1}{2}(|\sigma(x,\mu)|^2+|\sigma^0(x,\mu)|^2)&=\frac{p_0-1}{2}\cdot0.0125 x^2.
		\end{align*}
		Using $|c(x,\mu)|^2=\sin^2(x)\le 1$ and $L_3=1$, we have 
		\begin{align*}
			&\langle x,b(x,\mu)\rangle+\frac{p_0-1}{2}(|\sigma(x,\mu)|^2+|\sigma^0(x,\mu)|^2)+\frac{1}{2L_3}|c(x,\mu)|^2\int_{\mathbb R_0} |z|\Big((1+L_3|z|)^{p_0-1}-1\Big)\nu(\mathrm{d}z)\\
			&\quad \leq \Big(0.15+\frac{p_0-1}{2}\cdot0.0125\Big)x^2+0.05\mathcal W_2^2(\mu,\delta_0)+\tfrac12 \int_{\mathbb R_0}|z|\Big((1+|z|)^{p_0-1}-1\Big)\cdot C_{\alpha,\lambda}\,|z|^{-1-\alpha}e^{-\lambda|z|}\,\mathrm{d}z.
		\end{align*}
		Let
		\[
		I:=\tfrac12\int_{\mathbb R_0}|z|\Big((1+|z|)^{p_0-1}-1\Big)\nu(\mathrm{d}z).
		\]
		Setting $t=\lambda |z|$, we obtain
		\[
		\begin{aligned}
			I
			&= C_{\alpha,\lambda}\int_{0}^{\infty}\Big(\frac{t}{\lambda}\Big)^{-\alpha}
			\Big(\bigl(1+\tfrac{t}{\lambda}\bigr)^{p_0-1}-1\Big)e^{-t}\,\frac{\mathrm{d}t}{\lambda}\\[6pt]
			&= C_{\alpha,\lambda}\,\lambda^{\alpha-1}\int_{0}^{\infty} t^{-\alpha}
			\Big(\bigl(1+\tfrac{t}{\lambda}\bigr)^{p_0-1}-1\Big)e^{-t}\,\mathrm{d}t.
		\end{aligned}
		\]			
		As $t\to 0$, the term $\bigl(1+\tfrac{t}{\lambda}\bigr)^{p_0-1}-1 \sim (p_0-1)\tfrac{t}{\lambda}$, so it is integrable if $\alpha<2$. 
		As $t\to\infty$, exponential damping $e^{-t}$ ensures integrability for all $p_0$.	
		Thus the integral $I$ is finite for all $p_0\in\mathbb R$ and $\alpha\in(0,1)$.
		Hence we may choose	
		\[\gamma_1:=0.15+\frac{p_0-1}{2}\cdot0.0125,\qquad \gamma_2:=0.05,\qquad \eta:=\tfrac12 \int_{\mathbb R_0}|z|\Big((1+|z|)^{p_0-1}-1\Big)\nu(\mathrm{d}z).
		\]			
		This proves Assumption \textbf{(A6)}.\\
		\textbf{(7)} Note that
		\begin{align*}
			&|\sigma(x)-\sigma(\overline x)|^2+|\sigma^0(x)-\sigma^0(\overline x)|^2+|c(x)-c(\overline x)|^2\int_{\mathbb R_0}|z|^2\,\nu(\mathrm{d}z)\\
			&\quad=0.01|x-\overline x|^2+0.0025|x-\overline x|^2+2C_{\alpha,\lambda}\,\lambda^{\alpha-2}\,\Gamma(2-\alpha)|\sin x-\sin\overline x|^2\\
			&\quad\le (0.0125+2C_{\alpha,\lambda}\,\lambda^{\alpha-2}\,\Gamma(2-\alpha))|x-\overline x|^2.
		\end{align*}
		Hence choose $\tilde L_1:=0.0125+2C_{\alpha,\lambda}\,\lambda^{\alpha-2}\,\Gamma(2-\alpha)$ and $\tilde L_2:=0$. This verifies Assumption \textbf{(A7)}.		
	\end{example}
	\begin{rem}
		We point out that assumptions \textbf{(A1)}, \textbf{(A2)}, \textbf{(A4)}, \textbf{(A6)} and \textbf{(A7)} does \textbf{not} hold for an isotropic $\alpha$-stable L\'evy measure with index $\alpha \in (0,2)$, and the measure $\nu(\mathrm{d}z)=c_{d,\alpha}\,|z|^{-d-\alpha}\,\mathrm{d}z.$
		In fact
		\[
		\int_{|z|>1}|z|^2\,\nu(\mathrm{d}z)
		= c_{d,\alpha}\int_{|z|>1}|z|^{2-d-\alpha}\,\mathrm{d}z.
		\]
		Using spherical coordinates,
		\[
		\int_{|z|>1}|z|^{2-d-\alpha}\,\mathrm{d}z
		= \int_{1}^{\infty}\int_{S^{d-1}} r^{2-d-\alpha}\,r^{d-1}\,\mathrm{d}\sigma(\theta)\,\mathrm{d}r
		= S_{d-1}\int_{1}^{\infty} r^{1-\alpha}\,\mathrm{d}r,
		\]
		where $S_{d-1}=2\pi^{d/2}/\Gamma(d/2)$ is the surface area of the $(d-1)$-sphere.
		
		The radial integral is
		\[
		\int_{1}^{\infty} r^{1-\alpha}\,\mathrm{d}r
		= \frac{1}{2-\alpha}\left(r^{2-\alpha}\right)\Big|_{1}^{\infty}.
		\]
		For $\alpha\in(0,2)$ we have $2-\alpha>0$, hence the limit as $r\to\infty$ diverges. Thus
		\[
		\int_{|z|>1}|z|^2\,\nu(\mathrm{d}z)=\infty.
		\]
		Therefore the isotropic $\alpha$-stable L\'evy measure does not admit a finite second moment.
	\end{rem}

	\begin{thm}\label{00}
		Let assumptions \textup{\textbf{(A1)-(A7)}} hold. Then, for any random distribution $X^\mu_0 \in \mathcal{F}_0$ with $\mathbb{E}|X^\mu_0|^2<\infty$, there exists a unique c$\grave{a}$dl$\grave{a}$g process X taking values in $\mathbb{R}^d$ satisfying the McKean-Vlasov SDE \eqref{eq:MKV-SDE} with
		initial distribution \(X^\mu_0\) such that 
		\[\sup_{t \in [0,T]} \mathbb{E} [|X_t|^2] \leq K,\]
		where $K := K(\mathbb{E}|X^\mu_0|^2, d, L, L_1, \gamma_1, \gamma_2, T)$ is a positive constant.
	\end{thm}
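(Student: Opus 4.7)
The plan is to establish well-posedness via a two-layer fixed-point scheme on the space of $\mathcal{F}^0$-adapted measure flows, adapting the common-noise strategy of \cite{Carmona2018,Shao2021} to accommodate the compensated Poisson integral and the polynomial-growth drift. Concretely, I would work on the space
\begin{equation*}
\mathcal{M}_T := \Big\{\mu = (\mu_t)_{t\in[0,T]} : \mu \text{ is } \mathcal{F}^0_t\text{-adapted, c\`adl\`ag in } \mathcal{P}_2(\mathbb{R}^d),\ \sup_{t\le T}\mathbb{E}^0[\mathcal{W}_2^2(\mu_t,\delta_0)] < \infty\Big\},
\end{equation*}
freeze $\mu\in\mathcal{M}_T$ in the coefficients of \eqref{eq:MKV-SDE}, and look for a fixed point of the map $\Phi:\mu\mapsto\mathcal{L}^1(Y^\mu)$, where $Y^\mu$ is the solution of the decoupled SDE.

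The inner step is the construction of $Y^\mu$ from
\begin{equation*}
\mathrm{d}Y_t = b(Y_t,\mu_t)\,\mathrm{d}t + \sigma(Y_t,\mu_t)\,\mathrm{d}W_t + c(Y_{t-},\mu_{t-})\,\mathrm{d}Z_t + \sigma^0(Y_t,\mu_t)\,\mathrm{d}W^0_t, \quad Y_0=X^\mu_0.
\end{equation*}
This is a standard L\'evy-driven SDE with random time-inhomogeneous coefficients: the joint monotonicity \textbf{(A2)} absorbs the drift, diffusion and jump contributions into a single linear bound, while the polynomial Lipschitz assumption \textbf{(A3)} supplies local control of the drift. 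A classical internal Picard iteration, together with It\^o's formula applied to $|Y^\mu_t|^2$, the compensated-Poisson isometry, \textbf{(A1)}, and Gronwall's inequality, yields a unique strong c\`adl\`ag solution together with the a priori estimate
\begin{equation*}
\sup_{t\le T}\mathbb{E}|Y^\mu_t|^2 \le C\Big(1 + \mathbb{E}|X^\mu_0|^2 + \sup_{t\le T}\mathbb{E}[\mathcal{W}_2^2(\mu_t,\delta_0)]\Big).
\end{equation*}

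For the outer iteration I would compare $Y^\mu$ and $Y^{\bar\mu}$ via It\^o on $|Y^\mu_t-Y^{\bar\mu}_t|^2$, invoking \textbf{(A2)} to obtain
\begin{equation*}
\mathbb{E}|Y^\mu_t-Y^{\bar\mu}_t|^2 \le L_1\int_0^t\mathbb{E}|Y^\mu_s-Y^{\bar\mu}_s|^2\,\mathrm{d}s + L_2\int_0^t\mathbb{E}[\mathcal{W}_2^2(\mu_s,\bar\mu_s)]\,\mathrm{d}s.
\end{equation*}
Combining this with the synchronous-coupling bound $\mathbb{E}[\mathcal{W}_2^2(\Phi(\mu)_s,\Phi(\bar\mu)_s)]\le \mathbb{E}|Y^\mu_s-Y^{\bar\mu}_s|^2$, a weighted supremum norm $\sup_{t\le T}e^{-\lambda t}\mathbb{E}|\cdot|^2$ with $\lambda$ large (or, equivalently, a short-horizon contraction followed by a patching argument) will make $\Phi$ a strict contraction on $\mathcal{M}_T$. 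The Banach fixed-point theorem then produces a unique $\mu^*$, and $X:=Y^{\mu^*}$ solves \eqref{eq:MKV-SDE}; the quantitative estimate $\sup_{t\le T}\mathbb{E}|X_t|^2\le K$ falls out of the a priori bound on the frozen problem together with the identity $\mathbb{E}[\mathcal{W}_2^2(\mathcal{L}^1(X_s),\delta_0)]=\mathbb{E}|X_s|^2$.

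The step I anticipate as the main obstacle is not either iteration in isolation but their interaction: because \textbf{(A3)} only yields polynomial Lipschitz control of $b$, the cross term $\langle Y^\mu-Y^{\bar\mu},\,b(Y^\mu,\mu)-b(Y^{\bar\mu},\bar\mu)\rangle$ cannot be linearised directly by a Lipschitz estimate. I would split it by adding and subtracting $b(Y^{\bar\mu},\mu)$, absorb the state piece via the monotone structure of \textbf{(A2)}, and control the measure-dependent remainder through Young's inequality together with the higher-moment bounds on $Y^{\bar\mu}$ that \textbf{(A4)}--\textbf{(A6)} make available. A subsidiary technical point is the verification that $\mathcal{L}^1(Y^\mu_t)$ is genuinely $\mathcal{F}^0_t$-measurable with values in $\mathcal{P}_2(\mathbb{R}^d)$, which follows from the regular-conditional-probability construction recalled in the introduction.
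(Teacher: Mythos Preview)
Your proposal is correct in spirit and would yield a valid proof, but it differs from the paper's argument in two noteworthy ways.

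\textbf{The fixed-point space.} The paper works on the space $\mathscr{Y}_T(2)$ of \emph{processes} (c\`adl\`ag, adapted, with $\sup_t\mathbb{E}|\eta_t|^2<\infty$) equipped with the weighted metric $\boldsymbol{\rho}_{2,\infty}(\eta,\tilde\eta)=(\mathbb{E}\sup_{t\le T}e^{-\lambda t}|\eta_t-\tilde\eta_t|^2)^{1/2}$, and proves completeness of this space in a separate lemma. The contraction map sends a process $\eta$ to the solution $Y^\mu$ with $\mu_t=\mathcal{L}(\eta_t\mid\mathcal{F}^0_t)$. Because the metric carries a supremum \emph{inside} the expectation, the contraction estimate requires BDG on the three martingale terms, and this is precisely why the paper invokes the extra Lipschitz assumption \textbf{(A7)} on $\sigma,\sigma^0,c$. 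Your route through the measure-flow space $\mathcal{M}_T$ with the pointwise metric $\sup_t e^{-\lambda t}\mathbb{E}[\mathcal{W}_2^2(\mu_t,\bar\mu_t)]$ is the Carmona--Delarue set-up; since you only need $\mathbb{E}|Y^\mu_t-Y^{\bar\mu}_t|^2$ at fixed times, the martingales vanish in expectation and \textbf{(A7)} is never used. In that sense your argument is slightly more economical, at the cost of having to check that $t\mapsto\mathcal{L}^1(Y^\mu_t)$ is $\mathcal{F}^0_t$-adapted and that $\mathcal{M}_T$ is complete---points you flag but do not carry out.

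\textbf{Your anticipated obstacle is not one.} The concern you raise about the cross term $\langle Y^\mu-Y^{\bar\mu},\,b(Y^\mu,\mu)-b(Y^{\bar\mu},\bar\mu)\rangle$ is unnecessary: assumption \textbf{(A2)} is stated jointly in \emph{both} the state and the measure arguments, so it bounds this term (together with the diffusion and jump differences) by $L_1|Y^\mu-Y^{\bar\mu}|^2+L_2\mathcal{W}_2^2(\mu,\bar\mu)$ without any splitting through $b(Y^{\bar\mu},\mu)$. Indeed, the displayed inequality you wrote two paragraphs earlier is already the correct consequence of \textbf{(A2)}, and the paper uses exactly that inequality. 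No recourse to \textbf{(A3)} or to higher moments from \textbf{(A4)}--\textbf{(A6)} is needed for the contraction; those assumptions enter only in the well-posedness of the frozen SDE and in the later $p$-th moment proposition.
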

	
	\begin{proof}
		To facilitate readability, we divide the proof into the following steps.
		\begin{enumerate}
			\item[${\bf (1)}$] \textbf{Constructing Metric Spaces}
		\end{enumerate}
		Let \(\mathcal{D}([0,T];\mathbb{R}^d)\) denote the space of c$\grave{a}$dl$\grave{a}$g functions on \([0,T]\), which equips with the uniform metric
		\[
		\rho_{\infty}(\xi,\tilde{\xi}) := \sup_{t\in[0,T]}\left\{|\xi_{t}-\tilde{\xi}_{t}| \right\}, \quad \xi,\tilde{\xi} \in \mathcal{D}([0,T];\mathbb{R}^{d}).
		\]
		Then, the space \(\mathcal{D}([0,T];\mathbb{R}^{d})\) endowed with the distance \(\rho_{\infty}\) is a complete space.
		
		For any given \(T>0\) and \(p\geq 1\), let \(\mathscr{Y}_{T}(p)\) be a space consisting of all processes \(\eta:[0,T]\times\Omega\to\mathbb{R}^{d}\) satisfying	
		\begin{itemize}
			\item[(i)] For almost all \(\omega\in\Omega\), \(\eta_{.}(\omega):[0,T]\to\mathbb{R}^{d}\) is right-continuous with left limits;
			\item[(ii)] The process \(t \mapsto \eta_{t}\) is \(\mathcal{F}_{t}\)-adapted and satisfies \(\sup_{t\in[0,T]}\mathbb{E}[|\eta_{t}|^{p}]<\infty\).
		\end{itemize}
		Based on the distance \(\rho_{\infty}\) on \(\mathcal{D}([0,T];\mathbb{R}^{d})\), for each \(p\geq 1,\) with $\lambda$ as above, we define a metric on \(\mathscr{Y}_{T}(p)\) by  
		\[
		\boldsymbol{\rho}_{p,\infty}(\eta,\tilde{\eta}) := \bigg[\mathbb{E}\rho^{p}_{\infty}(\eta,\tilde{\eta})\bigg]^{1/p}, \quad \eta, \tilde{\eta} \in \mathscr{Y}_{T}(p). 
		\]
		The completeness of $(\mathscr{Y}_T(p), \boldsymbol{\rho}_{p,\infty})$ follows essentially from Lemma~2.3 in Shao et al.~\cite{Shao2024}.		
		\begin{enumerate}
			\item[${\bf (2)}$] \textbf{Freezing the distribution}
		\end{enumerate}
		For a given \(\eta \in \mathscr{Y}_T(2)\), denote \(\mu_t = \mathcal{L}(\eta_t | \mathcal{F}_t^0)\), we construct the following classic SDE
		\begin{equation}\label{rem 2.1}
			Y_{t}^{\mu}=Y_{0}^{\mu}+\int_{0}^{t}b(Y_{s}^{\mu},\mu_{s}){{\rm{d}}}s+\int_{0}^{t}\sigma(Y_{s}^{\mu},\mu_{s}){{\rm{d}}}W_{s}+\int_{0}^{t}c(Y_{s}^{\mu},\mu_{s}){{\rm{d}}}Z_{s}+\int_{0}^{t}\sigma^{0}(Y_{s}^{\mu},\mu_{s}){{\rm{d}}}W_{s}^{0},
		\end{equation}
		almost surely for any \( t \in [0, T],\) with \(Y_0^\mu = X_0\). According to \cite[Theorem 2.1\hspace{-0.1em}]{Neelima2020}, the classical SDE \eqref{rem 2.1} admits a unique solution with \( \sup_{t \in [0,T]} \mathbb{E}\bigl[\lvert Y_t^\mu \rvert^2\bigr] < \infty. \) Moreover, we have \((Y_t^\mu)_{t \in [0,T]} \in \mathscr{Y}_T(2)\).
		\begin{enumerate}
			\item[${\bf (3)}$] \textbf{Constructing a contractive mapping}
		\end{enumerate}
		Define the mapping  
		\[
		\Phi : \mathscr{Y}_T(2) \longrightarrow \mathscr{Y}_T(2), \quad \eta \mapsto Y^\mu.
		\]
		We show that \(\Phi\) is a strict contraction in \((\mathscr{Y}_T(2), \boldsymbol{\rho}_{2,\infty})\). For another \(\tilde{\eta} \in \mathscr{Y}_T(2)\), let \((Y_t^\nu)_{t \in [0,T]}\) solve SDE \eqref{rem 2.1} by replacing \(\mu_t = \mathcal{L}(\eta_t | \mathcal{F}_t^0)\) there with \(\nu_t = \mathcal{L}(\tilde{\eta}_t | \mathcal{F}_t^0)\) for \(t\in [0,T]\). 
		Applying It\^o's formula to \(|Y_{t}^{\mu}-Y_{t}^{\nu}|^{2} \),  taking the supremum over \([0,t] \subset [0,T]\) and expectation, and using the Assumption \textbf{(A2)}, we get
		\begin{equation}\label{2}
			\begin{aligned}
				&\mathbb{E}\Big[\sup_{0\le s\le t}\big|Y_s^\mu - Y_s^\nu\big|^2\Big]\\
				& \leq \mathbb{E}\sup_{0\le r\le t}\int_0^r\Big[ 2 \langle Y_s^\mu - Y_s^{\nu}, b(Y_s^\mu, \mu_s) - b(Y_s^{\nu}, \nu_s)\rangle \\
				&\quad+ |\sigma(Y_s^\mu, \mu_s) - \sigma(Y_s^{\nu}, \nu_s)|^2 + |\sigma^{0}(Y_s^\mu, \mu_s) - \sigma^{0}(Y_s^{\nu}, \nu_s)|^2 \Big]{\rm{d}}s \\
				&\quad+ \mathbb{E}[\sup_{0\le r\le t}\int_0^r \int_{\mathbb{R}_0^d} |(c(Y_s^\mu, \mu_s) - c(Y_s^{\nu}, \nu_s)) z|^2 \nu({\rm{d}}z) {\rm{d}}s] \\
				&\quad + 2\, \mathbb{E}\left[\sup_{0 \le r \le t} \int_0^r \left\langle Y_s^\mu - Y_s^\nu, (\sigma(Y_s^\mu, \mu_s) - \sigma(Y_s^\nu, \nu_s)) \mathrm{d}W_s \right\rangle \right] \\
				&\quad + 2\, \mathbb{E}\left[\sup_{0 \le r \le t} \int_0^r \left\langle Y_s^\mu - Y_s^\nu, (\sigma^0(Y_s^\mu, \mu_s) - \sigma^0(Y_s^\nu, \nu_s)) \mathrm{d}W_s^0 \right\rangle \right] \\
				&\quad + 2\, \mathbb{E}\left[\sup_{0 \le r \le t} \int_0^r \int_{\mathbb{R}_0^d} \Big( |Y_{s^-}^\mu - Y_{s^-}^\nu + \big( c(Y_{s^-}^\mu, \mu_s) - c(Y_{s^-}^\nu, \nu_s) \big)\, z|^2 - |Y_{s^-}^\mu - Y_{s^-}^\nu|^2 \Big) \widetilde{N}(\mathrm{d}s, \mathrm{d}z) \right]\\
				&\le \mathbb{E}\Big[\!\sup_{0 \le r \le t}\int_0^r \big(L_1|Y_s^\mu - Y_s^\nu|^2 + L_2\mathcal{W}_{2}^{2}(\mu_s,\nu_s)\big)\,\mathrm{d}s\Big]\\
				&\quad + 2\, \mathbb{E}\left[\sup_{0 \le r \le t} \int_0^r \left\langle Y_s^\mu - Y_s^\nu, (\sigma(Y_s^\mu, \mu_s) - \sigma(Y_s^\nu, \nu_s)) \mathrm{d}W_s \right\rangle \right] \\
				&\quad + 2\, \mathbb{E}\left[\sup_{0 \le r \le t} \int_0^r \left\langle Y_s^\mu - Y_s^\nu, (\sigma^0(Y_s^\mu, \mu_s) - \sigma^0(Y_s^\nu, \nu_s)) \mathrm{d}W_s^0 \right\rangle \right] \\
				&\quad + 2\, \mathbb{E}\left[\sup_{0 \le r \le t} \int_0^r \int_{\mathbb{R}_0^d} \Big( |Y_{s^-}^\mu - Y_{s^-}^\nu + \big( c(Y_{s^-}^\mu, \mu_s) - c(Y_{s^-}^\nu, \nu_s) \big)\, z|^2 - |Y_{s^-}^\mu - Y_{s^-}^\nu|^2 \Big) \widetilde{N}(\mathrm{d}s, \mathrm{d}z) \right].
			\end{aligned}
		\end{equation}
		Moreover, due to
		\[
		\mathbb{E}[|Y_s^\mu-Y_s^\nu|^2]\le \mathbb{E}\Big[\sup_{0\le r\le s}|Y_r^\mu-Y_r^\nu|^2\Big],\qquad
		\mathbb{E}[\mathcal W_2^2(\mu_s,\nu_s)]\le \mathbb{E}\Big[\sup_{0\le r\le s}\mathcal W_2^2(\mu_r,\nu_r)\Big],
		\]
		we obtain
		\begin{align*}
			&\mathbb{E}\Bigg[\int_0^t \Big(\tilde L_1 |Y_s^\mu-Y_s^\nu|^2+\tilde L_2 \mathcal W_2^2(\mu_s,\nu_s)\Big)\,\mathrm{d}s\Bigg]\\
			&= \int_0^t \Big(\tilde L_1\,\mathbb{E}[|Y_s^\mu-Y_s^\nu|^2]+\tilde L_2\,\mathbb{E}[\mathcal W_2^2(\mu_s,\nu_s)]\Big)\,\mathrm{d}s\\
			&\le \int_0^t \Big(\tilde L_1\,\mathbb{E}[\sup_{0\le r\le s}|Y_r^\mu-Y_r^\nu|^2]
			+\tilde L_2\,\mathbb{E}[\sup_{0\le r\le s}\mathcal W_2^2(\mu_r,\nu_r)]\Big)\,\mathrm{d}s.
		\end{align*}
		Therefore,
		\begin{align*}
			& C_1\,\mathbb{E}\Bigg[\Bigg(\int_0^t |Y_s^\mu - Y_s^\nu|^2
			\Big(\tilde L_1 |Y_s^\mu - Y_s^\nu|^2+\tilde L_2 \mathcal W_2^2(\mu_s,\nu_s)\Big)\,\mathrm{d}s\Bigg)^{1/2}\Bigg]\\
			&\quad \le C_1\Big(\mathbb{E}[\sup_{0\le r\le t}|Y_r^\mu-Y_r^\nu|^2]\Big)^{1/2}
			\Bigg(\int_0^t \Big(\tilde L_1\,\mathbb{E}[\sup_{0\le r\le s}|Y_r^\mu-Y_r^\nu|^2]
			+\tilde L_2\,\mathbb{E}[\sup_{0\le r\le s}\mathcal W_2^2(\mu_r,\nu_r)]\Big)\,\mathrm{d}s\Bigg)^{1/2}.
		\end{align*}
		Applying Young's inequality $uv\le \tfrac{1}{2}\epsilon u^2+\tfrac{1}{2\epsilon}v^2$, let \(\epsilon =\frac{1}{4C_1}\) we have
		\begin{align*}
			& C_1\,\mathbb{E}\Bigg[\Bigg(\int_0^t |Y_s^\mu - Y_s^\nu|^2
			\Big(\tilde L_1 |Y_s^\mu - Y_s^\nu|^2+\tilde L_2 \mathcal W_2^2(\mu_s,\nu_s)\Big)\,\mathrm{d}s\Bigg)^{1/2}\Bigg]\\
			&\quad \le \frac{C_1\epsilon}{2}\,\mathbb{E}\Big[\sup_{0\le r\le t}|Y_r^\mu-Y_r^\nu|^2\Big]
			+\frac{C_1}{2\epsilon}\int_0^t \Big(\tilde L_1\,\mathbb{E}[\sup_{0\le r\le s}|Y_r^\mu-Y_r^\nu|^2]
			+\tilde L_2\,\mathbb{E}[\sup_{0\le r\le s}\mathcal W_2^2(\mu_r,\nu_r)]\Big)\,\mathrm{d}s\\
			&\quad \le \frac{1}{8}\,\mathbb{E}\Big[\sup_{0\le r\le t}|Y_r^\mu-Y_r^\nu|^2\Big]
			+2{C_1}^2\int_0^t \Big(\tilde L_1\,\mathbb{E}[\sup_{0\le r\le s}|Y_r^\mu-Y_r^\nu|^2]
			+\tilde L_2\,\mathbb{E}[\sup_{0\le r\le s}\mathcal W_2^2(\mu_r,\nu_r)]\Big)\,\mathrm{d}s.
		\end{align*}
		Finally, using
		\begin{align*}
			&\tilde L_1\,\mathbb{E}[\sup_{0\le r\le s}|Y_r^\mu-Y_r^\nu|^2]
			+\tilde L_2\,\mathbb{E}[\sup_{0\le r\le s}\mathcal W_2^2(\mu_r,\nu_r)]\\
			&\quad \le \max\{\tilde L_1,\tilde L_2\}\Big(\mathbb{E}[\sup_{0\le r\le s}|Y_r^\mu-Y_r^\nu|^2]
			+\mathbb{E}[\sup_{0\le r\le s}\mathcal W_2^2(\mu_r,\nu_r)]\Big),
		\end{align*}
		we obtain
		\begin{equation}\label{a}
			\begin{aligned}
				& C_1\,\mathbb{E}\Bigg[\Bigg(\int_0^t |Y_s^\mu - Y_s^\nu|^2
				\Big(\tilde L_1 |Y_s^\mu - Y_s^\nu|^2+\tilde L_2 \mathcal W_2^2(\mu_s,\nu_s)\Big)\,\mathrm{d}s\Bigg)^{1/2}\Bigg]\\
				& \le \frac{1}{8}\,\mathbb{E}\Big[\sup_{0\le r\le t}|Y_r^\mu-Y_r^\nu|^2\Big]
				+ 2{C_1}^2\,\max\{\tilde L_1,\tilde L_2\}\\
				&\quad \times \int_0^t\Big(\mathbb{E}[\sup_{0\le r\le s}|Y_r^\mu-Y_r^\nu|^2]
				+\mathbb{E}[\sup_{0\le r\le s}\mathcal W_2^2(\mu_r,\nu_r)]\Big)\,\mathrm{d}s.
			\end{aligned}
		\end{equation}
		By using BDG's inequality and \eqref{a}, we have
		\begin{equation}\label{BDG1}
			\begin{aligned}
				&\mathbb{E} \left[ \sup_{0\le r \le t} \left| \int_0^r \langle Y_s^\mu - Y_s^\nu, (\sigma(Y_s^\mu, \mu_s) - \sigma(Y_s^\nu, \nu_s)) \, \mathrm{d}W_s \rangle \right| \right] \\
				&\quad \le C_1\, \mathbb{E} \left[ \left( \int_0^t |Y_s^\mu - Y_s^\nu|^2 \cdot |\sigma(Y_s^\mu, \mu_s) - \sigma(Y_s^\nu, \nu_s)|^2 \,\mathrm{d}s \right)^{1/2} \right] \\
				&\quad \le C_1\, \mathbb{E} \left[ \left( \int_0^t |Y_s^\mu - Y_s^\nu|^2 \cdot \left(\tilde{L}_1|Y_s^\mu - Y_s^\nu|^2+\tilde{L}_2\mathcal{W}_{2}^{2}(\mu_s,\nu_s)\right) \,\mathrm{d}s \right)^{1/2} \right]\\
				&\quad \leq \frac{1}{8}\,\mathbb{E}\Big[\sup_{0\le r\le t}|Y_r^\mu-Y_r^\nu|^2\Big]+ K_1\int_{0}^{t}\left(\mathbb{E}\Big[\sup_{0\le r\le s}|Y_r^\mu - Y_r^\nu|^2\Big]+\mathbb{E}\Big[\sup_{0\le r\le s}\mathcal{W}_{2}^{2}(\mu_r,\nu_r)\Big]\right)\,\mathrm{d}s,
			\end{aligned}
		\end{equation}
		where \(K_1 \;=\; 2{C_1}^2\cdot\,\max\{\tilde L_1,\tilde L_2\}\). In the same way for the term driven by \(W\), we have
		\begin{equation}\label{BDG2}
			\begin{aligned}
				&\mathbb{E}\left[\sup_{0 \le r \le t} \int_0^r \left\langle Y_s^\mu - Y_s^\nu, (\sigma^0(Y_s^\mu, \mu_s) - \sigma^0(Y_s^\nu, \nu_s)) \mathrm{d}W_s^0 \right\rangle \right] \\
				&\quad \le C_2\, \mathbb{E} \left[ \left( \int_0^t |Y_s^\mu - Y_s^\nu|^2 \cdot |\sigma^0(Y_s^\mu, \mu_s) - \sigma^0(Y_s^\nu, \nu_s)|^2 \,\mathrm{d}s \right)^{1/2} \right] \\
				&\quad \le C_2\, \mathbb{E} \left[ \left( \int_0^t |Y_s^\mu - Y_s^\nu|^2 \cdot \left(\tilde{L}_1 |Y_s^\mu - Y_s^\nu|^2 + \tilde{L}_2 \mathcal{W}_2^2(\mu_s, \nu_s)\right) \,\mathrm{d}s \right)^{1/2} \right] \\
				&\quad \le \frac{1}{8}\,\mathbb{E}\Big[ \sup_{0\le r\le t} |Y_r^\mu - Y_r^\nu|^2 \Big] 
				+ K_2 \int_0^t \Big( \mathbb{E}\big[\sup_{0\le r\le s}|Y_r^\mu - Y_r^\nu|^2\big] 
				+ \mathbb{E}\big[\sup_{0\le r\le s}\mathcal{W}_2^2(\mu_r,\nu_r)\big] \Big)\,\mathrm{d}s,
			\end{aligned}
		\end{equation}
		where \(K_2 = 2C_2^2 \cdot \max\{\tilde{L}_1,\tilde{L}_2\}\).\\
		As for the jump term, let
		\[
		M_r:=\int_0^r \int_{\mathbb{R}_0^d} \Big( |Y_{s^-}^\mu - Y_{s^-}^\nu + \big( c(Y_{s^-}^\mu, \mu_s) - c(Y_{s^-}^\nu, \nu_s) \big)\, z|^2 - |Y_{s^-}^\mu - Y_{s^-}^\nu|^2 \Big) \widetilde{N}(\mathrm{d}s, \mathrm{d}z),\qquad r\in[0,t].
		\]
		Note that
		\[
		\begin{aligned}
			&\Big|\big|Y_{s^-}^\mu-Y_{s^-}^\nu +\big(c(Y_{s^-}^\mu,\mu_s)-c(Y_{s^-}^\nu,\nu_s)\big)\,z\big|^2 - |Y_{s^-}^\mu-Y_{s^-}^\nu|^2\Big| \\
			&\quad \le 2\,|Y_{s^-}^\mu-Y_{s^-}^\nu|\,\cdot\big|c(Y_{s^-}^\mu,\mu_s)-c(Y_{s^-}^\nu,\nu_s)\big|\,|z|  + \big|c(Y_{s^-}^\mu,\mu_s)-c(Y_{s^-}^\nu,\nu_s)\big|^2\,|z|^2.
		\end{aligned}
		\]
		By the Kunita's inequality (see~\cite[Theorem 4.4.23\hspace{-0.01em}]{Applebaum2009}) for compensated Poisson integrals there exists a constant $C_3>0$ such that
		\[
		\begin{aligned}
			&\mathbb{E}\Big[\sup_{0\le r\le t}|M_r|\Big]\le C_3\Bigg\{\mathbb{E}\Big[\Big(\int_0^t\int_{\mathbb R^d_0}
			\Big|2\big\langle Y_{s^-}^\mu-Y_{s^-}^\nu,\; c(Y_{s^-}^\mu,\mu_s)-c(Y_{s^-}^\nu,\nu_s)\big\rangle\,z  \\
			& + \big|c(Y_{s^-}^\mu,\mu_s)-c(Y_{s^-}^\nu,\nu_s)\big|^2\,|z|^2\Big|^2
			\,\nu(\mathrm{d}z)\,\mathrm{d}s\Big)^{1/2}\Big] +\;\mathbb{E}\Big[\int_0^t\int_{\mathbb R^d_0}
			\Big|2\big\langle Y_{s^-}^\mu-Y_{s^-}^\nu,\; c(Y_{s^-}^\mu,\mu_s)-c(Y_{s^-}^\nu,\nu_s)\big\rangle\,z   \\
			& + \big|c(Y_{s^-}^\mu,\mu_s)-c(Y_{s^-}^\nu,\nu_s)\big|^2\,|z|^2\Big|
			\,\nu(\mathrm{d}z)\,\mathrm{d}s\Big]\Bigg\}.
		\end{aligned}
		\]
		Using \(|a+b|^2 \le 2|a|^2 + 2|b|^2\), \(|a+b| \le |a|+|b|\) and the Assumption \textbf{(A7)}, we get	
		\[
		\begin{aligned}
			&\int_0^t \int_{\mathbb R^d_0} \Big|2 \langle Y_{s^-}^\mu - Y_{s^-}^\nu, c(Y_{s^-}^\mu,\mu_s)-c(Y_{s^-}^\nu,\nu_s) \rangle z + |c(Y_{s^-}^\mu,\mu_s)-c(Y_{s^-}^\nu,\nu_s|^2 |z|^2 \Big|^2 \nu({\rm{d}}z) {\rm{d}}s \\
			&\quad \le \int_0^t \int_{\mathbb R^d_0} 8 |Y_{s^-}^\mu - Y_{s^-}^\nu|^2\cdot |c(Y_{s^-}^\mu,\mu_s)-c(Y_{s^-}^\nu,\nu_s)|^2 |z|^2 + 2 |c(Y_{s^-}^\mu,\mu_s)-c(Y_{s^-}^\nu,\nu_s)|^4 |z|^4 \, \nu({\rm{d}}z) {\rm{d}}s\\
			&\quad \le \int_0^t 8 |Y_{s^-}^\mu - Y_{s^-}^\nu|^2 \big( \tilde{L}_1 |Y_{s^-}^\mu - Y_{s^-}^\nu|^2 + \tilde{L}_2 \mathcal W_2^2(\mu_s,\nu_s) \big) {\rm{d}}s \\
			&\qquad + 2 \big( \tilde{L}_1 |Y_{s^-}^\mu - Y_{s^-}^\nu|^2 + \tilde{L}_2 \mathcal W_2^2(\mu_s,\nu_s) \big)^2 \bigg(\int_{\mathbb R^d_0}|z|^2\nu(\mathrm{d}z)\bigg)^{-2}\int_{\mathbb R^d_0} |z|^4 \nu({\rm{d}}z) {\rm{d}}s, 
		\end{aligned}
		\]	
		and	
		\[
		\begin{aligned}
			&\int_0^t \int_{\mathbb R^d_0} \Big|2 \langle Y_{s^-}^\mu - Y_{s^-}^\nu, c(Y_{s^-}^\mu,\mu_s)-c(Y_{s^-}^\nu,\nu_s) \rangle z + |c(Y_{s^-}^\mu,\mu_s)-c(Y_{s^-}^\nu,\nu_s|^2 |z|^2 \Big| \nu({\rm{d}}z) {\rm{d}}s \\
			&\quad \le \int_0^t \int_{\mathbb R^d_0} 2 |Y_{s^-}^\mu - Y_{s^-}^\nu|\cdot |c(Y_{s^-}^\mu,\mu_s)-c(Y_{s^-}^\nu,\nu_s)| |z| + |c(Y_{s^-}^\mu,\mu_s)-c(Y_{s^-}^\nu,\nu_s|^2 |z|^2 \, \nu({\rm{d}}z) {\rm{d}}s\\
			&\quad \le \int_0^t 2 |Y_{s^-}^\mu - Y_{s^-}^\nu| \sqrt{\tilde{L}_1 |Y_{s^-}^\mu - Y_{s^-}^\nu|^2 + \tilde{L}_2 \mathcal W_2^2(\mu_s,\nu_s)} \bigg(\int_{\mathbb R^d_0}|z|^2\nu(\mathrm{d}z)\bigg)^{-1/2}\int_{\mathbb R^d_0} |z| \nu({\rm{d}}z) {\rm{d}}s \\
			&\qquad + \int_0^t (\tilde{L}_1 |Y_{s^-}^\mu - Y_{s^-}^\nu|^2 + \tilde{L}_2 \mathcal W_2^2(\mu_s,\nu_s)){\rm{d}}s.
		\end{aligned}
		\]
		Hence, similar to the term driven by \(W\),
		\begin{equation}\label{BDG3}
			\begin{aligned}
				&\mathbb{E}\Big[\sup_{0\le r\le t}\Big|\int_0^r\int_{\mathbb R^d_0}
				\langle Y_s^\mu - Y_s^\nu,\; c(Y_s^\mu,\mu_s,z)-c(Y_s^\nu,\nu_s,z)\rangle
				\,\tilde N(\mathrm{d}s,\mathrm{d}z)\Big|\Big] \\
				&\quad \le \frac{1}{8}\,\mathbb{E}\Big[\sup_{0\le r\le t}|Y_r^\mu-Y_r^\nu|^2\Big]  + K_3\int_0^t\Big(\mathbb{E}[\sup_{0\le r\le s}|Y_r^\mu-Y_r^\nu|^2]
				+\mathbb{E}[\sup_{0\le r\le s}\mathcal W_2^2(\mu_r,\nu_r)]\Big)\,\mathrm{d}s,
			\end{aligned}
		\end{equation}
		where 
		\(K_3\) depends on \(\tilde{L}_1,\tilde{L}_2\) and the moments \(\int_{\mathbb R^d_0} |z| \nu({\rm{d}}z), \int_{\mathbb R^d_0} |z|^2 \nu({\rm{d}}z), \int_{\mathbb R^d_0} |z|^4 \nu({\rm{d}}z)\).			
		Substituting \eqref{BDG1}, \eqref{BDG2}, \eqref{BDG3} into \eqref{2}, we obtain
		\begin{equation}\label{sup}
			\begin{aligned}
				&\mathbb{E}\Big[\sup_{0\le s\le t}\big|Y_s^\mu - Y_s^\nu\big|^2\Big]\\
				&\le \mathbb{E}\Big[\!\sup_{0 \le r \le t}\int_0^r \big(L_1|Y_s^\mu - Y_s^\nu|^2 + L_2\mathcal{W}_{2}^{2}(\mu_s,\nu_s)\big)\,\mathrm{d}s\Big]\\
				&\quad + 2\, \mathbb{E}\left[\sup_{0 \le r \le t} \int_0^r \left\langle Y_s^\mu - Y_s^\nu, (\sigma(Y_s^\mu, \mu_s) - \sigma(Y_s^\nu, \nu_s)) \mathrm{d}W_s \right\rangle \right] \\
				&\quad + 2\, \mathbb{E}\left[\sup_{0 \le r \le t} \int_0^r \left\langle Y_s^\mu - Y_s^\nu, (\sigma^0(Y_s^\mu, \mu_s) - \sigma^0(Y_s^\nu, \nu_s)) \mathrm{d}W_s^0 \right\rangle \right] \\
				&\quad + 2\, \mathbb{E}\left[\sup_{0 \le r \le t} \int_0^r \int_{\mathbb{R}_0^d} \Big( |Y_{s^-}^\mu - Y_{s^-}^\nu + \big( c(Y_{s^-}^\mu, \mu_s) - c(Y_{s^-}^\nu, \nu_s) \big)\, z|^2 - |Y_{s^-}^\mu - Y_{s^-}^\nu|^2 \Big) \widetilde{N}(\mathrm{d}s, \mathrm{d}z) \right]\\
				&\le \mathbb{E}\Big[\!\sup_{0 \le r \le t}\int_0^r \big(L_1|Y_s^\mu - Y_s^\nu|^2 + L_2\mathcal{W}_{2}^{2}(\mu_s,\nu_s)\big)\,\mathrm{d}s\Big]+ \frac{3}{4}\,\mathbb{E}\Big[\sup_{0\le r\le t}|Y_r^\mu-Y_r^\nu|^2\Big]\\
				&\quad + 2(K_1+K_2+K_3)\int_{0}^{t}\left(\mathbb{E}\Big[\sup_{0\le r\le s}|Y_r^\mu - Y_r^\nu|^2\Big]+\mathbb{E}\Big[\sup_{0\le r\le s}\mathcal{W}_{2}^{2}(\mu_r,\nu_r)\Big]\right)\,\mathrm{d}s\\
				&\leq \frac{3}{4}\,\mathbb{E}\Big[\sup_{0\le r\le t}|Y_r^\mu-Y_r^\nu|^2\Big] + \frac{\hat{C}}{4}\int_{0}^{t}\left(\mathbb{E}[\sup_{0\le r\le s}|Y_r^\mu - Y_r^\nu|^2+\sup_{0\le r\le s}\mathcal{W}_{2}^{2}(\mu_r,\nu_r)]\right){\rm{d}}s,
			\end{aligned}
		\end{equation}
		where $\hat{C}:=\hat{C}(L_1,L_2,K_1,K_2,K_3)$.
		Hence,
		\begin{align*}
			\mathbb{E}\Big[\sup_{0\le s\le t}\big|Y_s^\mu - Y_s^\nu\big|^2\Big]&\leq \hat{C}\int_{0}^{t}\left(\mathbb{E}[\sup_{0\le r\le s}|Y_r^\mu - Y_r^\nu|^2+\sup_{0\le r\le s}\mathcal{W}_{2}^{2}(\mu_r,\nu_r)]\right){\rm{d}}s\\
			&\leq \hat{C}t\cdot \mathbb{E}[\sup_{0\le r\le t}|Y_r^\mu - Y_r^\nu|^2]+\hat{C}t\cdot \mathbb{E}[\sup_{0\le r\le t}\mathcal{W}_{2}^{2}(\mu_r,\nu_r)].
		\end{align*}
		Therefore, we have
		\begin{align*}
			\mathbb{E}\Big[\sup_{0\le s\le t}\big|Y_s^\mu - Y_s^\nu\big|^2\Big] &\leq \frac{\hat{C}t}{1-\hat{C}t} \cdot \mathbb{E}[\sup_{0\le r\le t}\mathcal{W}_{2}^{2}(\mu_r,\nu_r)] \\
			&\leq \frac{\hat{C}t}{1-\hat{C}t} \cdot \mathbb{E}[\sup_{0\le r\le t}|\eta_{r}-\tilde{\eta}_{r}|^2].
		\end{align*}
Combining the definition of the metric on the space 
$(\mathscr{Y}_T(2), \boldsymbol{\rho}_{2,\infty})$ and recalling that
$Y := \Phi(X)$ and $\tilde Y := \Phi(\tilde X)$, we obtain, for any $t\in[0,T]$,
\begin{align*}
	\boldsymbol{\rho}_{2,\infty}(\Phi(X),\Phi(\tilde{X}))
	&= \boldsymbol{\rho}_{2,\infty}(Y,\tilde{Y}) := \bigg[\mathbb{E}\sup_{0\le r\le t}|Y_{r}-\tilde{Y}_{r}|^{2}\bigg]^{1/2} \\
	&\leq \bigg(\frac{\hat{C}t}{1-\hat{C}t}\bigg)^{1/2}
	\bigg[\mathbb{E}\sup_{0\le r\le t}|X_{r}-\tilde{X}_{r}|^{2}\bigg]^{1/2} \\
	&=: \bigg(\frac{\hat{C}t}{1-\hat{C}t}\bigg)^{1/2}
	\boldsymbol{\rho}_{2,\infty}(X,\tilde{X}).
\end{align*}
		Choosing \(t=t_0\in(0,\frac{1}{2\hat{C}})\), then we have \(0<\frac{\hat{C}t_0}{1-\hat{C}t_0}<1\).
		
		By the contraction mapping principle on $[0,t_0]$, the mapping
		\[
		\Phi : \mathscr{Y}_T(2) \to \mathscr{Y}_T(2)
		\]
		is a strict contraction on the complete metric space \(\bigl(\mathscr{Y}_T(2), \boldsymbol{\rho}_{2,\infty}\bigr)\). Hence there exists a unique fixed point \(X^*\in\mathscr{Y}_T(2)\) such that
		\[
		\Phi(X^*) \;=\; X^*.
		\]
		By definition of \(\Phi\), \(X^*\) satisfies
		\begin{align*}
			X^*_t&= X^\mu_0
			+ \int_0^t b\bigl(X^*_s,\mathcal{L}(X^*_s\mid\F^0_s)\bigr)\,{{\rm{d}}}s + \int_0^t \sigma \bigl(X^*_s,\mathcal{L}(X^*_s\mid\F^0_s)\bigr)\,{{\rm{d}}}W_s
			+ \int_0^t c\bigl(X^*_s,\mathcal{L}(X^*_s\mid\F^0_s)\bigr)\,{{\rm{d}}}Z_s\\
			&+ \int_0^t \sigma^0\bigl(X^*_s,\mathcal{L}(X^*_s\mid\F^0_s)\bigr)\,{\rm{d}}W^0_s,
		\end{align*}
		which is precisely equation \eqref{eq:MKV-SDE}. Uniqueness follows immediately: if \(Y\in\mathscr{Y}_T(2)\) is any other solution of \eqref{eq:MKV-SDE}, then \(Y = \Phi(Y)\), and by uniqueness of the fixed point \(Y = X^*\). Therefore, SDE \eqref{eq:MKV-SDE} with initial distribution \(X^\mu_0\) has a unique solution. 
		
		We now iterate this argument to cover the whole interval $[0,T]$. Let
		$N:=\lceil T/t_0\rceil$ and for $k=1,\dots,N-1$ define the shifted time intervals
		$I_k:=[k t_0,(k+1)t_0\wedge T]$. Suppose that a unique solution
		$X^{(k-1)}$ has been constructed on $[0,k t_0]$. Use the terminal value
		$X^{(k-1)}_{k t_0}$ as the initial condition for the problem on the next interval $I_k$.
		Define the mapping $\Phi^{(k)}$ on the space $\mathscr{Y}_{I_k}(2)$ exactly as before but
		with initial time $k t_0$ and initial law induced by $X^{(k-1)}_{k t_0}$. The same local
		estimate (with the same constant $\hat C$) is valid on any interval of length $\le t_0$,
		hence $\Phi^{(k)}$ is again a contraction on $\mathscr{Y}_{I_k}(2)$ with contraction
		constant $q<1$. Therefore there exists a unique fixed point $X^{(k)}$ on $I_k$.
	    By constructing the solutions,  we have  $X^{(k)}_{k t_0}=X^{(k-1)}_{k t_0}$,
		so the pieces $\{X^{(k)}:k=0,\dots,N-1\}$ connect together to give a unique adapted solution
		$X$ on $[0,T]$ satisfying \eqref{eq:MKV-SDE}. Thus the existence on $[0,T]$ is proved.
		
		\begin{enumerate}
			\item[${\bf (4)}$] \textbf{Proving the boundedness of the second-order moment }
		\end{enumerate}
		Apply It\^o's formula to $ |X_{t}|^{2} $. Using Assumption \textbf{(A1)}, taking expectation and noting \[ \mathbb{E}[\mathcal{W}_{2}^{2}(\mathcal{L}^{1}(X_{s}),\delta_{0})]\leq\mathbb{E}|X_{s}|^{2}, \] we derive
		\[
		\mathbb{E}|X_{t}|^{2}\leq |x_{0}|^{2}+L\int_{0}^{t}(1+2\mathbb{E}|X_{s}|^{2}){\rm{d}}s.
		\]
		By Gr\"onwall's inequality, we obtain 
		\[
		\sup_{t\in[0,T]}\mathbb{E}|X_{t}|^{2}\leq \left(|x_{0}|^{2}+LT\right)e^{2LT}:=K.
		\]
		This establishes existence, uniqueness, and the boundedness of the second-order moment. 
	\end{proof} 
	We will need the following Lemma later.
		\begin{lem}\label{lemma}
		For any $s \geq 0$, we have
		\begin{equation}
			\mathcal{W}^{2}_2(\mathcal{L}^{1}_{X_s}, \delta_{0}) = \mathbb{E}[|X_{s}|^{2} \mid \mathcal{F}^0],
		\end{equation}
		where $\mathcal{L}^{1}_{X_s}$ denotes the conditional law of $X_s$ given $\mathcal{F}^0$.
	\end{lem}
	\begin{proof}
		Recall that for two probability measures $\mu$ and $\nu$ on $\mathbb{R}^d$, the 2-Wasserstein distance is defined as
		\[
		\mathcal{W}_2^2(\mu, \nu) = \inf_{\pi \in \Pi(\mu, \nu)} \iint_{\mathbb{R}^d \times \mathbb{R}^d} |x - y|^2  {\rm{d}}\pi(x, y),
		\]
		where $\Pi(\mu, \nu)$ denotes the set of all couplings of $\mu$ and $\nu$.
		
		In our case, $\nu = \delta_0$ is the Dirac measure at the origin. Since $\delta_0$ is a deterministic measure, there is only one possible coupling $\pi$ between $\mathcal{L}^{1}_{X_s}$ and $\delta_0$, namely the product measure
		\[
		\pi = \mathcal{L}^{1}_{X_s} \otimes \delta_0.
		\]
		Now compute the cost with this specific coupling
		\[
		\begin{aligned}
			\mathcal{W}_2^2(\mathcal{L}^{1}_{X_s}, \delta_0) 
			&= \iint_{\mathbb{R}^d \times \mathbb{R}^d} |x - y|^2  {\rm{d}}\pi(x, y) \\
			&= \iint_{\mathbb{R}^d \times \mathbb{R}^d} |x - y|^2  {\rm{d}}(\mathcal{L}^{1}_{X_s} \otimes \delta_0)(x, y) \\
			&= \int_{\mathbb{R}^d} \left( \int_{\mathbb{R}^d} |x - y|^2  \delta_0({\rm{d}}y) \right) \mathcal{L}^{1}_{X_s}({\rm{d}}x).
		\end{aligned}
		\]
		Since $\delta_0$ is concentrated at $y = 0$, the inner integral evaluates to
		\[
		\int_{\mathbb{R}^d} |x - y|^2  \delta_0({\rm{d}}y) = |x - 0|^2 = |x|^2.
		\]
		Therefore,
		\[
		\mathcal{W}_2^2(\mathcal{L}^{1}_{X_s}, \delta_0) = \int_{\mathbb{R}^d} |x|^2  \mathcal{L}^{1}_{X_s}({\rm{d}}x) = \mathbb{E}[|X_s|^2 \mid \mathcal{F}^0].
		\]
		This completes the proof.
	\end{proof}
	\begin{prp}\label{prp:2.3}
		Let assumptions \textup{\textbf{(A1)-(A7)}} hold. Based on Theorem \ref{00}, we can further obtain for each $p \in [2, p_0]$, one can find a constant $C_p > 0$ such that for all $t \ge 0$, 
		\begin{equation}\label{2.1}
			\mathbb{E} [|X_t|^p] \leq
			\begin{cases}
				C_p(1 + e^{\gamma p t}) & \text{if } \gamma \neq 0, \\
				C_p(1 + t)^{p/2} & \text{if } \gamma = 0, p = 2 \text{ or } \gamma = 0, \gamma_2 > 0, p \in (2, p_0], \\
				C_p(1 + t)^p & \text{if } \gamma = 0, \gamma_2 = 0, p \in (2, p_0],
			\end{cases}
		\end{equation}
		where $\gamma = \gamma_1 + \gamma_2$.\\
		\(\text{If }\gamma<0,\text{ then }
		\sup_{t\ge0}\mathbb{E}\bigl[|X_t|^p\bigr]\le2C_p.\)		
	\end{prp}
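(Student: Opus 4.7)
The plan is to derive Proposition \ref{prp:2.3} by combining the finite-horizon well-posedness of Theorem 2.1 with an It\^o-based moment estimate. Existence and uniqueness on $[0,\infty)$ follow by patching: Theorem 2.1 produces a unique solution on every finite interval $[0,T]$, and pathwise uniqueness lets us consistently extend these restrictions to a global c\`adl\`ag process $X=(X_t)_{t\ge 0}$ lying in $\bigcap_{T>0}\mathscr{Y}_T(2)$. The remaining content is the global-in-time moment estimate \eqref{2.1}, which I would establish by applying It\^o's formula to $x \mapsto |x|^p$ for $p\in[2,p_0]$ (assuming the initial datum has a finite $p$-th moment) and then controlling the resulting generator via Assumption \textbf{(A6)}.

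After It\^o's formula, the finite-variation part of $|X_t|^p$ decomposes into the drift contribution $p|X_t|^{p-2}\langle X_t,b\rangle$, the Brownian second-order terms from $\sigma$ and $\sigma^0$ (which, after a Cauchy--Schwarz bound on the cross terms, are dominated by $\tfrac{p(p-1)}{2}|X_t|^{p-2}(|\sigma|^2+|\sigma^0|^2)$), and the compensator of the jump integral. The jump compensator is rewritten via the identity
\[
|x+y|^p-|x|^p-p|x|^{p-2}\langle x,y\rangle \;=\; p\int_0^1 \bigl(|x+ty|^{p-2}\langle x+ty,y\rangle-|x|^{p-2}\langle x,y\rangle\bigr)\,\mathrm{d}t,
\]
whose integrand is estimated in terms of $|x|^{p-2}|y|\bigl((1+|y|/|x|)^{p-1}-1\bigr)$ in order to reproduce the weight $\tfrac{1}{2L_3}|c(x,\mu)|^2\int_{\mathbb{R}^d_0}|z|\bigl((1+L_3|z|)^{p-1}-1\bigr)\nu(\mathrm{d}z)$ that appears in \textbf{(A6)}. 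The linear growth bound \textbf{(A5)} on $c$ and the integrability assumption \textbf{(A4)} on $\nu$ guarantee that every piece is finite. Summing the four contributions and invoking \textbf{(A6)} (with $p$ in place of $p_0$, as justified by the remark preceding the proposition) yields the pointwise inequality $\mathcal{A}|x|^p \le p|x|^{p-2}\bigl(\gamma_1|x|^2+\gamma_2\mathcal{W}_2^2(\mu,\delta_0)+\eta\bigr)$.

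Taking expectation after a standard localisation by stopping times that removes the Brownian and compensated-Poisson martingale parts, and applying Young's inequality $a^{p-2}b^2\le\tfrac{p-2}{p}a^p+\tfrac{2}{p}b^p$ together with the identity $\mathcal{W}_2^2(\mathcal{L}^1(X_t),\delta_0)=\mathbb{E}^1|X_t|^2$ and Jensen's inequality $\mathbb{E}^0\bigl[(\mathbb{E}^1|X_t|^2)^{p/2}\bigr]\le\mathbb{E}|X_t|^p$, I reduce everything to the scalar differential inequality
\[
\tfrac{\mathrm{d}}{\mathrm{d}t}\mathbb{E}|X_t|^p \;\le\; p\gamma\,\mathbb{E}|X_t|^p+C_p\,\mathbb{E}|X_t|^{p-2}+C_p, \qquad p\in[2,p_0],
\]
with $\gamma=\gamma_1+\gamma_2$. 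For $\gamma\ne 0$ Gr\"onwall's lemma immediately delivers the exponential bound $C_p(1+e^{p\gamma t})$; in the dissipative regime $\gamma<0$ this sharpens to the uniform bound $2C_p$ upon choosing $C_p$ large enough to dominate both the initial datum and the equilibrium level $C_p/|p\gamma|$. For $\gamma=0$ I proceed by induction on even $p$: the base case $p=2$ reads $\mathbb{E}|X_t|^2\le C_2(1+t)$, and feeding this back into the inequality for $p=4,6,\ldots$ yields the polynomial rates in \eqref{2.1}, the distinction between $(1+t)^{p/2}$ and $(1+t)^p$ reflecting whether the Wasserstein contribution $\gamma_2\mathcal{W}_2^2$ is retained or vanishes.

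The hard part will be the jump-compensator estimate: matching the symmetric It\^o correction $|x+y|^p-|x|^p-p|x|^{p-2}\langle x,y\rangle$ to the asymmetric structural weight $|z|\bigl((1+L_3|z|)^{p-1}-1\bigr)$ prescribed by \textbf{(A6)} demands a careful integral expansion combined with the growth estimate of \textbf{(A5)} and the $2p_0$-integrability of $\nu$ away from the origin. Once this jump bound is secured, the localisation argument, Young's inequality, Gr\"onwall, and the induction over even $p$ are all routine.
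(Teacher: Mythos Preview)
Your overall architecture matches the paper's: invoke Theorem~2.1 for well-posedness, apply It\^o's formula to a power of $|X_t|$, use \textbf{(A6)} to control the generator, localise, and induct on even $p\in[2,p_0]$. The differences are in the execution of two steps.

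First, for the jump compensator the paper does \emph{not} use the integral remainder you suggest. Because it inducts over even $p$, it writes $|x+cz|^p=(|x|^2+|cz|^2+2\langle x,cz\rangle)^{p/2}$ and expands with the binomial theorem. Repeated use of \textbf{(A5)} and the binomial identities $\sum_j\binom{i}{j}a^j=(1+a)^i$, $\sum_j\binom{i}{j}ja^j=ia(1+a)^{i-1}$ then produces exactly the leading term $\tfrac{p}{2L_3}|x|^{p-2}|c|^2|z|\bigl((1+L_3|z|)^{p-1}-1\bigr)$ required by \textbf{(A6)}, plus a remainder polynomial $Q_p$ of total degree $p$ in $|X_s|$ and $1+\sqrt{\mathbb{E}[|X_s|^2\mid\mathcal{F}^0]}$ with $|X_s|$-degree at most $p-2$. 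Your heuristic bound $|x|^{p-2}|y|\bigl((1+|y|/|x|)^{p-1}-1\bigr)$ does not directly yield this structure: the ratio $|y|/|x|=|c z|/|x|$ is not controlled by $L_3|z|$ when $|x|$ is small, so you would still need to peel off lower-order cross terms analogous to the paper's $Q_p$. The binomial route buys an algebraically exact match to the weight in \textbf{(A6)}.

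Second, your scalar differential inequality $\tfrac{\mathrm d}{\mathrm dt}\mathbb{E}|X_t|^p\le p\gamma\,\mathbb{E}|X_t|^p+C_p\mathbb{E}|X_t|^{p-2}+C_p$ is too clean: the $Q_p$ remainder contributes mixed terms like $\mathbb{E}\bigl[|X_s|^{p-2-k}(1+\mathbb{E}[|X_s|^2\mid\mathcal{F}^0])^{(k+2)/2}\bigr]$, which after H\"older give products of lower moments rather than a single $\mathbb{E}|X_t|^{p-2}$. This is why, in the case $\gamma=0$, $\gamma_2=0$, the paper integrates $C(1+s)^{p-1}$ (coming from such products) and obtains $(1+t)^p$ instead of the $(1+t)^{p-1}$ your inequality would predict. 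The paper also works with $e^{-\gamma_1 pt}|X_t|^p$ rather than $|X_t|^p$, which separates the $\gamma_1$ and $\gamma_2$ contributions before invoking the induction hypothesis; this is what makes the case analysis in \eqref{2.1} come out correctly.
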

	
	\begin{proof}
		We hereby follow the methodology introduced in \cite[Proposition 2.3 \hspace{-0.1em}]{Tran2025}.\\
		\textbf{Step 1:}
		It follows from Theorem \ref{00} that $\sup_{t \in [0,T]} \mathbb{E} [|X_t|^2] \leq K,K := K(\mathbb{E}|X^\mu_0|^2, d, L, L_1, \gamma_1, \gamma_2, T).$\\
		\textbf{Step 2:}
		We first demonstrate that for any even natural number $p \in [2, p_0]$ and for any $T > 0$, the following holds,
		\begin{equation}\label{2.2}
			\sup_{t \in [0,T]} \mathbb{E}[|X_t|^p] \leq C(T,p).
		\end{equation}
		Observe that \eqref{2.2} is valid for the case $p=2$ as established in Step~1.  
		Assume now, as an induction hypothesis, that \eqref{2.2} holds for every even integer $q \in [2,\, p-2]$, namely
		\begin{equation}\label{2.3}
			\sup_{t \in [0,T]} \mathbb{E}\big[\,|X_t|^q\,\big] \le C(T,q).
		\end{equation}
		Let $\lambda \in \mathbb{R}$ and consider any even integer $p$.  
		By applying It\^o's formula to the process $e^{-\lambda t}|X_t|^p$, we obtain
		\begin{equation}\label{2.4}
			\begin{aligned}
				&e^{-\lambda t} |X_t|^p = |x_0|^p + \int_0^t e^{-\lambda s} \Big[ 
				-\lambda |X_s|^p + p |X_s|^{p-2} \langle X_s, b(X_s, \mathcal{L}^{1}_{X_s}) \rangle + \frac{p}{2} |X_s|^{p-2} |\sigma(X_s, \mathcal{L}^{1}_{X_s})|^2 \\
				&+ \frac{p}{2} |X_s|^{p-2} |\sigma^{0}(X_s, \mathcal{L}^{1}_{X_s})|^2 + \frac{p(p-2)}{2} |X_s|^{p-4} \big| X_s^\top \sigma(X_s,\mathcal{L}^{1}_{X_s}) \big|^2  \\
				& + \frac{p(p-2)}{2} |X_s|^{p-4} \big| X_s^\top \sigma^{0}(X_s,\mathcal{L}^{1}_{X_s}) \big|^2 \Big] {{\rm{d}}}s  + p \int_0^t e^{-\lambda s}|X_s|^{p-2} \Big[ \langle X_s, \sigma(X_s, \mathcal{L}^{1}_{X_s}) {{\rm{d}}}W_s \rangle \\
				&+ \langle X_s, \sigma^{0}(X_s, \mathcal{L}^{1}_{X_s}) {{\rm{d}}}W^{0}_s \rangle \Big] 
				+ \int_0^t \int_{\mathbb{R}_0^d} e^{-\lambda s} \Big( |X_{s^-} + c(X_{s^-}, \mathcal{L}^{1}_{X_{s^-}}) z|^p - |X_{s^-}|^p \Big) \widetilde{N}({{\rm{d}}}s, {{\rm{d}}}z) \\
				& + \int_0^t \int_{\mathbb{R}_0^d} e^{-\lambda s} \Big( |X_{s} + c(X_{s}, \mathcal{L}^{1}_{X_{s}}) z|^p - |X_{s}|^p - p|X_{s}|^{p-2} \langle X_s, c(X_{s}, \mathcal{L}^{1}_{X_{s}}) z \rangle \Big) \nu({{\rm{d}}}z) {{\rm{d}}}s.
			\end{aligned}
		\end{equation}
		To handle the last integral in \eqref{2.4}, it suffices to apply the binomial theorem, which yields, for any $t \geq 0$,
		\begin{equation}\label{2.5}
			\begin{aligned}
				&|X_s + c(X_s, \mathcal{L}^{1}_{X_s}) z|^p 
				= \left( |X_s|^2 + |c(X_s, \mathcal{L}^{1}_{X_s}) z|^2 + 2 \langle X_s, c(X_s, \mathcal{L}^{1}_{X_s}) z \rangle \right)^{p/2}\\
				&= |X_s|^p + \frac{p}{2} |X_s|^{p-2} \left( |c(X_s, \mathcal{L}^{1}_{X_s}) z|^2 + 2 \langle X_s, c(X_s, \mathcal{L}^{1}_{X_s}) z \rangle \right)\\
				&\quad+ \sum_{i=2}^{p/2} \binom{p/2}{i} |X_s|^{p-2i} \left( |c(X_s, \mathcal{L}^{1}_{X_s}) z|^2 + 2 \langle X_s, c(X_s, \mathcal{L}^{1}_{X_s}) z \rangle \right)^i\\
				&= |X_s|^p + p |X_s|^{p-2} \langle X_s, c(X_s, \mathcal{L}^{1}_{X_s}) z \rangle + \frac{p}{2} |X_s|^{p-2} |c(X_s, \mathcal{L}^{1}_{X_s}) z|^2\\
				&\quad+ \sum_{i=2}^{p/2} \binom{p/2}{i} |X_s|^{p-2i} \left( |c(X_s, \mathcal{L}^{1}_{X_s}) z|^2 + 2 \langle X_s, c(X_s, \mathcal{L}^{1}_{X_s}) z \rangle \right)^i.
			\end{aligned}
		\end{equation}
		Next, by repeatedly applying the binomial theorem, using Assumption \textbf{(A5)}, the identity \eqref{lemma}
		and Young's inequality
		\[
		y|x|^{p-3} \leq \frac{1}{2}(|x|^{p-2} + y^{2}|x|^{p-4}),
		\]
		which holds for any $x \in \mathbb{R}^d$, $y > 0$, as well as the Binomial identities
		\[
		\sum_{j=0}^i \binom{i}{j} a^{j} = (1 + a)^{i}\quad \textup{and} \quad \sum_{j=0}^i \binom{i}{j} j a^{j} = i a (1 + a)^{i-1},
		\]
		which valid for any $a \in \mathbb{R}$, we obtain,
		\begin{align*}
			&|X_s|^{p-2i} \Big( |c(X_s, \mathcal{L}^1_{X_s}) z|^2 + 2 \langle X_s, c(X_s, \mathcal{L}^1_{X_s}) z \rangle \Big)^i \\
			&= |X_s|^{p-2i} \sum_{j=0}^i \binom{i}{j} |c(X_s, \mathcal{L}^1_{X_s}) z|^{2i-2j} 2^j \langle X_s, c(X_s, \mathcal{L}^1_{X_s}) z \rangle^j \\
			&\leq \sum_{j=0}^i \binom{i}{j} 2^j |X_s|^{p-2i+j} |c(X_s, \mathcal{L}^1_{X_s})|^{2i-j} |z|^{2i-2j} \\
			&= |c(X_s, \mathcal{L}^1_{X_s})|^2 \sum_{j=0}^i \binom{i}{j} 2^j |X_s|^{p-2i+j} |c(X_s, \mathcal{L}^1_{X_s})|^{2i-j-2} |z|^{2i-j} \\
			&\leq |c(X_s, \mathcal{L}^1_{X_s})|^2 \sum_{j=0}^i \binom{i}{j} 2^j |X_s|^{p-2i+j}|z|^{2i-j} L_3^{2i-j-2}(1+|X_s|+\mathcal{W}_2(\mathcal{L}^{1}_{X_s}, \delta_{0}))^{2i-j-2}\\
			&= |c(X_s, \mathcal{L}^1_{X_s})|^2 \sum_{j=0}^i \binom{i}{j} 2^j |X_s|^{p-2i+j} |z|^{2i-j} L_3^{2i-j-2}
			\Big( 1 + |X_s| + \sqrt{\mathbb{E}[|X_s|^2|\mathcal{F}^0]} \Big)^{2i-j-2} \\
			&= |c(X_s, \mathcal{L}^1_{X_s})|^2 \sum_{j=0}^i \binom{i}{j} 2^j |X_s|^{p-2i+j} |z|^{2i-j} L_3^{2i-j-2}
			\Bigg( |X_s|^{2i-j-2} \\
			&\quad+ (2i-j-2)
			\Big( 1 + \sqrt{\mathbb{E}[|X_s|^2|\mathcal{F}^0]} \Big) |X_s|^{2i-j-3}  + \sum_{k=2}^{2i-j-2} \binom{2i-j-2}{k}\\
			&\quad \times \Big( 1 + \sqrt{\mathbb{E}[|X_s|^2|\mathcal{F}^0]} \Big)^k |X_s|^{2i-j-2-k} \Bigg) \\
			&= |c(X_s,\mathcal{L}^1_{X_s})|^2 \sum_{j=0}^i \binom{i}{j} 2^j |z|^{2i-j} L_3^{2i-j-2}
			\Bigg( |X_s|^{p-2} + (2i-j-2)
			\Big( 1 + \sqrt{\mathbb{E}[|X_s|^2|\mathcal{F}^0]} \Big) |X_s|^{p-3} \\
			&\quad+ \sum_{k=2}^{2i-j-2} \binom{2i-j-2}{k}
			\Big( 1 + \sqrt{\mathbb{E}[|X_s|^2|\mathcal{F}^0]} \Big)^k |X_s|^{p-2-k} \Bigg) \\
			&\leq |c(X_s, \mathcal{L}^1_{X_s})|^2 \sum_{j=0}^i \binom{i}{j} 2^j |z|^{2i-j} L_3^{2i-j-2}
			\Bigg( |X_s|^{p-2} + \frac{(2i-j-2)}{2}
			\Big( |X_s|^{p-2} \\
			&\quad+ \Big( 1 + \sqrt{\mathbb{E}[|X_s|^2|\mathcal{F}^0]} \Big)^2 |X_s|^{p-4} \Bigg) + \sum_{k=2}^{2i-j-2} \binom{2i-j-2}{k}
			\Big( 1 + \sqrt{\mathbb{E}[|X_s|^2|\mathcal{F}^0]} \Big)^k |X_s|^{p-2-k} \Bigg) \\
			&= |c(X_s, \mathcal{L}^1_{X_s})|^2 |X_s|^{p-2} \sum_{j=0}^i \binom{i}{j} 2^j |z|^{2i-j} L_3^{2i-j-2}(i-\frac{j}{2})\\
			&\quad+|c(X_s, \mathcal{L}^1_{X_s})|^2\sum_{j=0}^i \binom{i}{j} 2^j |z|^{2i-j} L_3^{2i-j-2}\Bigg(\Big( i-\frac{j}{2} - 1 \Big)\Big( 1 + \sqrt{\mathbb{E}[|X_s|^2|\mathcal{F}^0]} \Big)^2 |X_s|^{p-4} \\
			&\quad+ \sum_{k=2}^{2i-j-2} \binom{2i-j-2}{k}
			\Big( 1 + \sqrt{\mathbb{E}[|X_s|^2|\mathcal{F}^0]} \Big)^k |X_s|^{p-2-k} \Bigg)\\
			&= |c(X_s, \mathcal{L}^1_{X_s})|^2 |X_s|^{p-2}\big(\frac{i}{L_3^{2}}(L_3^{2}|z|^{2}+2L_3|z|)^{i}-\frac{|z|}{L_3}i(L_3^{2}|z|^{2}+2L_3|z|)^{i}\big)\\
			&\quad+|c(X_s, \mathcal{L}^1_{X_s})|^2\sum_{j=0}^i \binom{i}{j} 2^j |z|^{2i-j} L_3^{2i-j-2}\Bigg(\Big( i-\frac{j}{2} - 1 \Big)\Big( 1 + \sqrt{\mathbb{E}[|X_s|^2|\mathcal{F}^0]} \Big)^2 |X_s|^{p-4}\\
			&\quad+ \sum_{k=2}^{2i-j-2} \binom{2i-j-2}{k}
			\Big( 1 + \sqrt{\mathbb{E}[|X_s|^2|\mathcal{F}^0]} \Big)^k |X_s|^{p-2-k} \Bigg)\\				
			&\leq |c(X_s, \mathcal{L}^1_{X_s})|^{2} |X_s|^{p-2} \left( 
			\frac{i}{L_3^2} (L_3^2|z|^2 + 2L_3|z|)^{i} 
			- \frac{|z|i}{L_3} (L_3^2|z|^2 + 2L_3|z|)^{i-1} 
			\right)\\
			&\quad+ {L_3^2}\Big(1+|X_{s}|+\sqrt{\mathbb{E}[|X_s|^2|\mathcal{F}^0]}\Big)^2\sum_{j=0}^i \binom{i}{j} 2^j |z|^{2i-j} 
			L_3^{2i-j-2} \Bigg( 
			(i - \frac{j}{2} - 1)(1 + \sqrt{\mathbb{E}[|X_s|^2|\mathcal{F}^0]})^2 \\
			&\quad \times|X_s|^{p-4} + \sum_{k=2}^{2i-j-2} \binom{2i-j-2}{k} (1 + \sqrt{\mathbb{E}[|X_s|^2|\mathcal{F}^0]})^k |X_s|^{p-2-k} 
			\Bigg) \\
			&=|c(X_s, \mathcal{L}^1_{X_s})|^{2} |X_s|^{p-2} 
			\frac{i|z|}{L_3}(1 + L_3|z|)(L_3^2|z|^2 + 2L_3|z|)^{i-1}\\
			&\quad+ \sum_{j=0}^i |z|^{2i-j} Q_p \big(p-2, 2i-j, |X_s|, 1+\sqrt{\mathbb{E}[|X_s|^2|\mathcal{F}^0]}\big),\\
			&\text{where} \quad 2i - j \geq 2 \quad \text{and} \quad 
		\end{align*}
		\begin{equation*}
			Q_q(m, n, x, y) = \sum_{\ell_1 \leq m, \, \ell_2 \leq n, \, \ell_1 + \ell_2 = q} c_{\ell_1 \ell_2} x^{\ell_1} y^{\ell_2}.
		\end{equation*}
		This, together with the identity
		\[
		\sum_{i=2}^{p/2} \binom{p/2}{i} i a^{i-1} = 
		\frac{p}{2} \left((1 + a)^{p/2 - 1} - 1\right),
		\]
		which holds for any $a \in \mathbb{R}$, yields,
		\begin{equation}\label{2.6}
			\begin{aligned}
				&\sum_{i=2}^{p/2} \binom{p/2}{i} |X_s|^{p-2i} 
				\left( |c(X_s, \mathcal{L}^1_{X_s})z|^2 + 2 \langle X_s, c(X_s, \mathcal{L}^1_{X_s})z \rangle \right)^i \\
				&\leq \frac{|z|}{L_3} \left( 1 + L_3 |z| \right) |c(X_s, \mathcal{L}^1_{X_s})|^2 |X_s|^{p-2} 
				\sum_{i=2}^{p/2} \binom{p/2}{i} i \left( L_3^2 |z|^2 + 2L_3 |z| \right)^{i-1} \\ 
				&\quad+ \sum_{i=2}^{p/2} \sum_{j=0}^i \binom{p/2}{i} |z|^{2i-j} Q_p(p - 2, 2i - j, |X_s|, 1 + \sqrt{\mathbb{E}[|X_s|^2|\mathcal{F}^0]})\\
				&= \frac{p}{2} |X_s|^{p-2} |c(X_s, \mathcal{L}^1_{X_s})|^2 \frac{|z|}{L_3} 
				\left( (1 + L_3 |z|)^{p-1} - L_3 |z| - 1 \right) \\ 
				&\quad+ \sum_{i=2}^{p/2} \sum_{j=0}^i \binom{p/2}{i} |z|^{2i-j} Q_p(p - 2, 2i - j, |X_s|, 1 + \sqrt{\mathbb{E}[|X_s|^2|\mathcal{F}^0]}). 
			\end{aligned}
		\end{equation}
		As a consequence of \eqref{2.5} and \eqref{2.6}, we conclude that for any $s \geq 0$,
		\begin{equation}\label{2.7}
			\begin{aligned}
				&|X_s + c(X_s, \mathcal{L}^1_{X_s})z|^p - |X_s|^p - p|X_s|^{p-2} \langle X_s, c(X_s, \mathcal{L}^1_{X_s})z \rangle \\
				&\leq \frac{p}{2} |X_s|^{p-2} |c(X_s, \mathcal{L}^1_{X_s})|^2 \Bigg( |z|^2 + \frac{|z|}{L_3} \left( (1 + L_3|z|)^{p-1} - L_3|z| - 1 \right) \Bigg) \\
				&\quad+ \sum_{i=2}^{p/2} \sum_{j=0}^i \binom{p/2}{i} |z|^{2i-j} Q_p(p - 2, 2i - j, |X_s|, 1 + \sqrt{\mathbb{E}[|X_s|^2|\mathcal{F}^0]})\\
				&= \frac{p}{2L_3} |X_s|^{p-2} |c(X_s, \mathcal{L}^1_{X_s})|^2 |z| \left( (1 + L_3|z|)^{p-1} - 1 \right) \\
				&\quad+ \sum_{i=2}^{p/2} \sum_{j=0}^i \binom{p/2}{i} |z|^{2i-j} Q_p(p - 2, 2i - j, |X_s|, 1 + \sqrt{\mathbb{E}[|X_s|^2|\mathcal{F}^0]}).
			\end{aligned}
		\end{equation}
		Under Assumption \textbf{(A4)}, the L\'evy measure satisfies 
		\[
		\int_{|z|>1} |z|^{2p_0} \nu({\rm{d}}z) < \infty,
		\]
		which ensures the convergence of the higher-order jump integrals in \eqref{2.5}. 
		Therefore, by substituting \eqref{2.7} into \eqref{2.4}, we obtain that for any $t \geq 0$,
		\begin{align*}
			&e^{-\lambda t} |X_t|^p \leq |x_0|^p +p \int_0^t e^{-\lambda s} |X_s|^{p-2} \Bigg[ 
			- \frac{\lambda}{p}|X_s|^2 + \langle X_s, b(X_s, \mathcal{L}^1_{X_s}) \rangle + \frac{p-1}{2} |\sigma(X_s, \mathcal{L}^1_{X_s})|^2 \\
			&+ \frac{p-1}{2} |\sigma^{0}(X_s, \mathcal{L}^1_{X_s})|^2+\frac{1}{2L_3} |c(X_s, \mathcal{L}^1_{X_s})|^2  \int_{\mathbb{R}^d_0} |z|\bigg( (1 + L_3|z|)^{p-1} - 1 \bigg) \nu({{\rm{d}}}z)\Bigg]{{\rm{d}}}s \\
			& +\int_{0}^{t}\int_{\mathbb{R}^d_0}e^{-\lambda s} \sum_{i=2}^{p/2} \sum_{j=0}^i \binom{p/2}{i} |z|^{2i-j} Q_p\bigg(p - 2, 2i - j, |X_s|, 1 + \sqrt{\mathbb{E}[|X_s|^2|\mathcal{F}^0]}\bigg) \nu({{\rm{d}}}z){{\rm{d}}}s \\
			&+ p \int_0^t e^{-\lambda s} |X_s|^{p-2} \bigg[\langle X_s, \sigma(X_s, \mathcal{L}^1_{X_s}) {{\rm{d}}}W_s \rangle+ \langle X_s, \sigma^{0}(X_s, \mathcal{L}^{1}_{X_s}) {{\rm{d}}}W^{0}_s \rangle \Bigg]  \\
			&+ \int_0^t \int_{\mathbb{R}^d_0} e^{-\lambda s} \Big( |X_{s-} + c(X_{s-}, \mathcal{L}^1_{X_s-})z|^p - |X_{s-}|^p \Big) \tilde{N}({{\rm{d}}}s, {{\rm{d}}}z).
		\end{align*}
		Now, for each $N > 0$, define the stopping time 
		\[
		\tau_N := \inf \{ t \geq 0 : |X_t| \geq N \}.
		\]
		By choosing $\lambda = \gamma_1 p$ and applying Assumption \textbf{(A6)}, Remark \ref{rem}, together with Lemma \ref{lemma}
		\[
		\mathcal{W}_2^2(\mathcal{L}^1_{X_s}, \delta_0) = \mathbb{E}[|X_s|^2 \mid \mathcal{F}^0],
		\]
		we obtain,
		\begin{equation}\label{2.8}
			\begin{aligned}
				&\mathbb{E} \left[ e^{-\gamma_1 p(t \wedge \tau_N)} |X_{t \wedge \tau_N}|^p \right] 
				\leq |x_0|^p + p\int_0^t e^{-\gamma_1 ps} \mathbb{E} \Big[ |X_s|^{p-2}\Big] \Big( \gamma_2 \mathcal{W}_2^2(\mathcal{L}^1_{X_s}, \delta_0) + \eta \Big) {{\rm{d}}}s\\
				&\quad+ \int_0^t \int_{\mathbb{R}^d_0} e^{-\gamma_1 p s}  \sum_{i=2}^{p/2} \sum_{j=0}^i \binom{p/2}{i} |z|^{2i - j} \mathbb{E} \left[ Q_p \left( p - 2, 2i - j, |X_s|, 1 + \sqrt{\mathbb{E}[|X_s|^2|\mathcal{F}^0]} \right) \right] \nu({{\rm{d}}}z) {{\rm{d}}}s \\
				&\leq |x_0|^p + p \int_0^t e^{-\gamma_1 p s} \mathbb{E} \left[ |X_s|^{p-2} \right]\big( \gamma_2 \mathbb{E} \left[ |X_s|^{2}|\mathcal{F}^0 \right] + \eta \big){{\rm{d}}}s\\
				&\quad+ \int_0^t \int_{\mathbb{R}^d_0} e^{-\gamma_1 p s}  \sum_{i=2}^{p/2} \sum_{j=0}^i \binom{p/2}{i} |z|^{2i - j} \mathbb{E} \left[ Q_p \left( p - 2, 2i - j, |X_s|, 1 + \sqrt{\mathbb{E}[|X_s|^2|\mathcal{F}^0]} \right) \right] \nu({{\rm{d}}}z) {{\rm{d}}}s.
			\end{aligned}
		\end{equation}
		Next, using the inequality 
		\[
		e^{-\gamma_1 p (t \wedge \tau_N)} \geq e^{-\gamma_1 p t},
		\]
		and the induction hypothesis \eqref{2.3}, there exists a positive constant $C(T, p)$, independent of $N$, such that
		
		\begin{equation}\label{2.9}
			\sup_{t \in [0,T]} \mathbb{E} \left[ |X_{t \wedge \tau_N}|^p \right] \leq C(T, p). 
		\end{equation}
		This yields
		\[
		\sup_{t \in [0,T]} \mathbb{P}(\tau_N < t) \leq \frac{C(T, p)}{N^p},
		\]
		which implies that \(\tau_N \to \infty\) almost surely as \(N \to \infty\).  
		Now, letting \(N \uparrow \infty\) and applying Fatou's lemma to the left-hand side of \eqref{2.9}, we obtain
		\[
		\sup_{t \in [0,T]} \mathbb{E}\left[ |X_t|^p \right] \leq C(T, p).
		\]
		Thus, by the principle of induction, we establish \eqref{2.2}.\\
		\textbf{Step 3:} We next aim to prove \eqref{2.1} for every even natural number $p \in [2, p_0]$. \\
		First, by applying \eqref{2.4} with $\lambda = 2\gamma$ and $p = 2$, and using Assumption \textbf{(A6)}, we obtain
		\begin{align*}
			&e^{-2\gamma t} |X_t|^2 
			\leq |x_0|^2 + 2 \int_0^t e^{-2\gamma s} \left( -\gamma_2 |X_s|^2 + \gamma_2 \mathcal{W}_2^2(\mathcal{L}^1_{X_s}, \delta_0) + \eta \right) {{\rm{d}}}s\\
			&\quad+ 2 \int_0^t e^{-2\gamma s} \bigg[\langle X_s, \sigma(X_s, \mathcal{L}^1_{X_s}) {{\rm{d}}}W_s+\sigma^{0}(X_s, \mathcal{L}^1_{X_s}) {{\rm{d}}}W_s^{0} \rangle\bigg] \\
			&\quad+ \int_0^t \int_{\mathbb{R}^d_0} e^{-2\gamma s}  \Big( |X_{s-} + c(X_{s-}, \mathcal{L}^1_{X_s-})z|^2 - |X_{s-}|^2 \Big) \tilde{N}({{\rm{d}}}s, {{\rm{d}}}z).
		\end{align*}
		Due to Lemma \ref{lemma}, we have
		\(
		\mathcal{W}_2^2\big(\mathcal{L}^1_{X_s}, \delta_0\big) = \mathbb{E}\big[\,|X_s|^2 \,\big|\, \mathcal{F}^0\big],
		\)
		and by applying the estimate \eqref{2.2}, we deduce
		\[
		\mathbb{E}\!\left[e^{-2\gamma t} |X_t|^2\right] 
		\le |x_0|^2 + 2 \int_0^t e^{-2\gamma s} \mathbb{E}\!\left[ -\gamma_2 |X_s|^{2} + \gamma_2 \mathbb{E}[|X_s|^2 \mid \mathcal{F}^0] + \eta \right] \mathrm{d}s 
		\le |x_0|^2 + 2\eta \int_0^t e^{-2\gamma s} \,\mathrm{d}s.
		\]
		This yields
		\[
		\mathbb{E}[|X_t|^2] \le 
		\begin{cases}
			\big( |x_0|^2 + \tfrac{\eta}{\gamma} \big) e^{2\gamma t} - \tfrac{\eta}{\gamma} & \gamma \neq 0, \\[0.4em]
			|x_0|^2 + 2\eta t & \gamma = 0,
		\end{cases}
		\]
		so that \eqref{2.1} holds in the case \(p=2\).  
		Assume now, as an induction hypothesis, \eqref{2.1} is valid for every even integer \(q \in [2,\, p-2]\), that is,
		\begin{equation}\label{2.10}
			\mathbb{E}[|X_t|^q] \le 
			\begin{cases}
				C_q \big( 1 + e^{\gamma q t} \big) & \gamma \neq 0, \\[0.4em]
				C_q (1+t)^{q/2} & \gamma = 0,\ \big(q = 2\ \text{or}\ (\gamma_2>0,\ q\in(2, p-2])\big), \\[0.4em]
				C_q (1+t)^q & \gamma = 0,\ \gamma_2 = 0,\ q \in (2, p-2].
			\end{cases}
		\end{equation}
		We prove that \eqref{2.1} also holds for the even integer \(p\).  
		The argument relies on \eqref{2.8}, the inductive assumption \eqref{2.10}, and Assumption~\textbf{(A4)}.  
		
		\paragraph{Case \(\gamma \neq 0\).}
		We have
		\[
		\mathbb{E} \!\left[ e^{-\gamma_1 p (t \wedge \tau_N)} |X_{t \wedge \tau_N}|^p \right] 
		\le |x_0|^p + C \int_0^t e^{-\gamma_1 p s} \big( 1 + e^{\gamma p s} \big) \mathrm{d}s
		= |x_0|^p + C \int_0^t \big( e^{-\gamma_1 p s} + e^{\gamma_2 p s} \big) \mathrm{d}s.
		\]
		Since $\tau_N \to \infty$ almost surely as $N \to \infty$, Fatou's lemma gives
		\[
		\mathbb{E} \!\left[ e^{-\gamma_1 p t} |X_t|^p \right] 
		\le |x_0|^p + C \int_0^t \big( e^{-\gamma_1 p s} + e^{\gamma_2 p s} \big) \mathrm{d}s.
		\]
		If $\gamma_1 = 0$, then
		\[
		\mathbb{E}[|X_t|^p] \le |x_0|^p + C\big(1 + t + e^{\gamma_2 p t}\big) \le C\big(1 + e^{\gamma p t}\big).
		\]
		If $\gamma_1 \neq 0$, we obtain
		\[
		\mathbb{E}[|X_t|^p] \le |x_0|^p e^{\gamma_1 p t} + \frac{C}{p\gamma_1}\big(e^{\gamma_1 p t} - 1\big) + \frac{C}{p\gamma_2} e^{\gamma p t} \le C\big(1 + e^{\gamma p t}\big).
		\]
		
		\paragraph{Case \(\gamma = 0\).}
		If $\gamma_2 > 0$, then
		\[
		\mathbb{E} \!\left[ e^{-\gamma_1 p (t \wedge \tau_N)} |X_{t \wedge \tau_N}|^p \right] 
		\le |x_0|^p + C \int_0^t e^{-\gamma_1 p s} (1 + s)^{p/2} \mathrm{d}s 
		\le |x_0|^p + C (1 + t)^{p/2} \!\! \int_0^t e^{-\gamma_1 p s} \mathrm{d}s.
		\]
		Letting $N \to \infty$ and using $-\gamma_1 = \gamma_2$, we find
		\[
		\mathbb{E}\!\left[ e^{\gamma_2 p t} |X_t|^p \right] 
		\le |x_0|^p + \frac{C}{\gamma_2 p} (1 + t)^{p/2} e^{\gamma_2 p t},
		\]
		hence $\mathbb{E}[|X_t|^p] \le C (1 + t)^{p/2}$.\\
		If $\gamma_2 = \gamma_1 = 0$, we have
		\[
		\mathbb{E}[|X_{t \wedge \tau_N}|^p] \le |x_0|^p + C \int_0^t (1 + s)^{p-1} \mathrm{d}s \le C (1 + t)^p.
		\]
		Letting $N \to \infty$ yields $\mathbb{E}[|X_t|^p] \le C (1 + t)^p$.
		
		Therefore, \eqref{2.1} holds for the given even $p$.  
		By induction, it follows that \eqref{2.1} is valid for all even integers $p \in [2,\, p_0]$, and by H\"older's inequality, for all real $p \in [2,\, p_0]$.  
		This completes the proof.		
	\end{proof}
	\section{Propagation of chaos}	
	For each \( N \in \mathbb{N} \), let \((W^i, Z^i)\), \(i = 1, \ldots, N\), be independent copies of the pair \((W, Z)\). Denote by \(N^i({\rm{d}}t, {\rm{d}}z)\) the Poisson random measure corresponding to the jumps of the L\'evy process \(Z^i\) with intensity measure \(\nu({\rm{d}}z) {\rm{d}}t\), and define the compensated Poisson random measure by
	\[
	\widetilde{N}^i({\rm{d}}t, {\rm{d}}z) := N^i({\rm{d}}t, {\rm{d}}z) - \nu({\rm{d}}z) {\rm{d}}t.
	\]
	Then, the L\'evy-It\^o decomposition of \(Z^i\) is given by
	\[
	Z_t^i = \int_0^t \int_{\mathbb{R}_0^d} z \, \widetilde{N}^i({\rm{d}}s, {\rm{d}}z), \quad t \geq 0.
	\]
	We now consider the system of non-interacting particles associated with the  McKean-Vlasov SDE driven by L\'evy processes with common noise \eqref{eq:MKV-SDE}, where the state \(X^i = (X_t^i)_{t \geq 0}\) of the \(i\)-th particle is defined by
	\begin{equation}
		\begin{split}
			X_t^i&= x_0 + \int_0^t b(X_s^i, \mathcal{L}^{1}_{X_s^i}) {{\rm{d}}}s + \int_0^t \sigma(X_s^i, \mathcal{L}^{1}_{X_s^i}) {{\rm{d}}}W_s^i+ \int_0^t \sigma^{0}(X_s^i, \mathcal{L}^{1}_{X_s^i}) {{\rm{d}}}W^{0}_s \\
			&+ \int_0^t \int_{\mathbb{R}_0^d} c\left(X_{s-}^i, \mathcal{L}^{1}_{X_{s-}^i}\right) z \widetilde{N}^i({{\rm{d}}}s, {{\rm{d}}}z),
		\end{split}
	\end{equation}
	for any $t \geq 0$ and $i \in \{1, \ldots, N\}$.\\
	For $\boldsymbol{X}^N := (X^1, X^2, \ldots, X^N),   \boldsymbol{Y}^N := (Y^1, Y^2, \ldots, Y^N) \in \mathbb{R}^{dN}$, we have
	\[
	\mathcal{W}_2^2(\mu^{\boldsymbol{X}^N}, \delta_0) = \frac{1}{N} \sum_{i=1}^N |X^i|^2.
	\]
	Here, the empirical measure is defined by
	\begin{equation}\label{ppp}
		\mu^{\boldsymbol{X}^N}({\rm{d}}x) := \frac{1}{N} \sum_{i=1}^N \delta_{X^i}({\rm{d}}x),
	\end{equation}
	where \(\delta_X\) denotes the Dirac measure concentrated at \(X\). 
	Moreover, a standard estimate for the Wasserstein distance between two empirical measures \(\mu^{\boldsymbol{X}^N}\) and \(\mu^{\boldsymbol{Y}^N}\) is given by
	\begin{equation}\label{eee}
		\mathcal{W}_2^2(\mu^{\boldsymbol{X}^N}, \mu^{\boldsymbol{Y}^N}) \leq \frac{1}{N} \sum_{i=1}^N |X^i - Y^i|^2 = \frac{1}{N} \left| \boldsymbol{X}^N - \boldsymbol{Y}^N \right|^2,
	\end{equation}
	(see inequality (1.24) in \cite{Carmona2016}).
	The true measure \(\mathcal{L}^1_{X_t}\) at each time \(t\) is approximated by the empirical measure
	\begin{equation}\label{qqq}
		\mu_t^{\boldsymbol{X}^N}({\rm{d}}x) := \frac{1}{N} \sum_{i=1}^N \delta_{X_t^{i,N}}({\rm{d}}x),
	\end{equation}
	where \(\boldsymbol{X}^N = (\boldsymbol{X}_t^N)_{t \geq 0} = (X_t^{1,N}, \ldots, X_t^{N,N})_{t \geq 0}^\top\) denotes the system of interacting particles, which is the solution to an \(\mathbb{R}^{dN}\)-valued SDE driven by L\'evy processes with common noise. The components \(X^{i,N} = (X_t^{i,N})_{t \geq 0}\) correspond to the state of the \(i\)-th particle. Then, we have
	\begin{equation}\label{3.2}
		\begin{split}
			X_t^{i,N} &= x_0 + \int_0^t b(X_s^{i,N}, \mu_s^{\boldsymbol{X}^N}) {{\rm{d}}}s + \int_0^t \sigma(X_s^{i,N}, \mu_s^{\boldsymbol{X}^N}) {{\rm{d}}}W_s^i+ \int_0^t \sigma^{0}(X_s^{i,N}, \mu_s^{\boldsymbol{X}^N}) {{\rm{d}}}W^{0}_s \\
			&\quad+ \int_0^t \int_{\mathbb{R}_0^d} c\left(X_{s-}^{i,N}, \mu_{s-}^{\boldsymbol{X}^N}\right) z \widetilde{N}^i({\rm{d}}s,{\rm{d}}z),
		\end{split}
	\end{equation}
	for any $t \geq 0$ and $i \in \{1, \ldots, N\}$.
	Note that the interacting particle system \(\boldsymbol{X}^N = (X^{i,N})_{i=1}^N{}^\top\) can be regarded as a SDE driven by L\'evy processes with common noise and random coefficients taking values in \(\mathbb{R}^{dN}\). 
	Therefore, under assumptions \textbf{(A1)}, \textbf{(A2)}, and \textbf{(A3)} and \(L_1 = L_2 > 0\), there exists a unique c$\grave{a}$dl$\grave{a}$g solution to \eqref{3.2} satisfying
	\[
	\max_{i \in \{1, \dots, N\}} \sup_{t \in [0,T]} \mathbb{E} \big[ |X^{i,N}_t|^2 \big] \leq K,
	\]
	for any \(N \in \mathbb{N}\), where the constant \(K > 0\) is independent of \(N\).
	
	\begin{prp}\label{prop:3.1}
		(\cite[Proposition 3.1]{Tran2025})  Let assumptions \textup{\textbf{(A1)-(A7)}} hold,  for each \(p \in [2, p_0]\), there exists a constant \(C_p > 0\) such that, for all \(t \ge 0\),
		\[
		\max_{1 \le i \le N} \mathbb{E}\!\left[ |X_t^{i,N}|^p \right] \le 
		\begin{cases}
			C_p\big(1 + e^{\gamma p t}\big) & \gamma \neq 0, \\[0.3em]
			C_p(1 + t)^{p/2} & \gamma = 0,\ p = 2 \ \text{or} \ (\gamma = 0,\ \gamma_2 > 0,\ p \in (2, p_0]), \\[0.3em]
			C_p(1 + t)^p & \gamma = 0,\ \gamma_2 = 0,\ p \in (2, p_0],
		\end{cases}
		\]
		where \(\gamma := \gamma_1 + \gamma_2\).  
		In the special case \(\gamma < 0\), one further has
		\[
		\max_{1 \le i \le N} \ \sup_{t \ge 0} \ \mathbb{E}\!\left[\,|X_t^{i,N}|^p\,\right] \le 2 C_p.
		\]
	\end{prp}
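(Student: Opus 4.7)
The plan is to follow the inductive scheme developed in the proof of Proposition~\ref{prp:2.3}, with the conditional law $\mathcal{L}^1_{X_s}$ replaced everywhere by the empirical measure $\mu_s^{\boldsymbol{X}^N}$. For each fixed $i \in \{1,\ldots,N\}$ and any even integer $p \in [2, p_0]$, I would apply It\^o's formula to $e^{-\lambda t}|X_t^{i,N}|^p$ with $\lambda = \gamma_1 p$, obtaining an expansion directly analogous to (2.4). The jump term is then treated by the same binomial expansion that produced (2.5)--(2.7), so that Assumption~\textbf{(A6)} can be invoked with $\mu = \mu_s^{\boldsymbol{X}^N}$.

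The decisive simplification is the identity
\[
\mathcal{W}_2^2(\mu_s^{\boldsymbol{X}^N}, \delta_0) = \frac{1}{N} \sum_{j=1}^N |X_s^{j,N}|^2,
\]
which replaces the conditional expectation identity used in Proposition~\ref{prp:2.3}. After introducing stopping times $\tau_M^i := \inf\{t \ge 0 : |X_t^{i,N}| \ge M\}$ to control the local martingale terms, taking expectations, and then taking the maximum over $i \in \{1, \ldots, N\}$, I would exploit the bound
\[
\frac{1}{N} \sum_{j=1}^N \mathbb{E}[|X_s^{j,N}|^q] \le \max_{1 \le j \le N} \mathbb{E}[|X_s^{j,N}|^q]
\]
to obtain a closed integral inequality for the quantity $\max_{1 \le i \le N} \mathbb{E}[|X_{t\wedge \tau_M^i}^{i,N}|^p]$. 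All constants in this inequality depend only on $p$, on $\gamma_1, \gamma_2, \eta, L_3$, and on the integrability of $\nu$ from \textbf{(A4)}, and thus remain independent of $N$.

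The base case $p = 2$ is handled by direct Gronwall-type estimation precisely as in the proof of Proposition~\ref{prp:2.3}. For the inductive step, the inductive hypothesis bounds $\max_i \mathbb{E}[|X_s^{i,N}|^{p-2}]$ and the lower-order terms arising from the polynomial $Q_p$; explicit integration then yields the three asymptotic regimes ($\gamma \neq 0$; $\gamma = 0$ with $\gamma_2 > 0$; $\gamma = 0$ with $\gamma_2 = 0$) in exactly the same way as in Proposition~\ref{prp:2.3}, including the dichotomy $\gamma_1 = 0$ versus $\gamma_1 \neq 0$. Letting $M \to \infty$ and invoking Fatou's lemma removes the stopping times, and H\"older's inequality extends the result from even integers to all real $p \in [2, p_0]$. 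The special case $\gamma < 0$ follows by observing that $1 + e^{\gamma p t} \le 2$ for all $t \ge 0$.

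The principal obstacle is ensuring that the constants remain genuinely uniform in $N$. This uniformity rests entirely on the fact that $\mathcal{W}_2^2(\mu_s^{\boldsymbol{X}^N}, \delta_0)$ is an average over the $N$ particles, so it is automatically controlled by the maximum of individual second moments; any alternative argument in which one summed rather than averaged would destroy $N$-independence. Consequently the induction must be performed not on each individual $\mathbb{E}[|X_t^{i,N}|^p]$ but on the quantity $\max_{1 \le i \le N} \mathbb{E}[|X_t^{i,N}|^p]$, and one must verify at each step that the right-hand side of the resulting Gronwall-type inequality is expressed in terms of this same maximum.
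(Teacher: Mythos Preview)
Your proposal is correct and matches the paper's approach: the paper's own proof of Proposition~\ref{prop:3.1} consists of a single sentence stating that the argument is identical to that of Proposition~\ref{prp:2.3} and is therefore omitted. Your outline in fact supplies more detail than the paper does, in particular the explicit identification of $\mathcal{W}_2^2(\mu_s^{\boldsymbol{X}^N},\delta_0)=\frac{1}{N}\sum_j |X_s^{j,N}|^2$ as the replacement for the conditional-expectation identity and the observation that closing the induction on $\max_i \mathbb{E}[|X_t^{i,N}|^p]$ is what secures $N$-independence of the constants.
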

	\begin{proof}
		The argument follows the same reasoning as in the proof of Proposition~\ref{prp:2.3} and is therefore omitted here.
	\end{proof}

	\begin{prp}\label{prop:3.2}
		Define the rate function
		\begin{equation}\label{3.3}
			\varphi(N) = 
			\begin{cases} 
				N^{-1/2} & \text{for } d < 4, \\
				N^{-1/2}\ln N & \text{for } d = 4,\\
				N^{-2/d} & \text{for } d > 4.
			\end{cases}
		\end{equation}
		Under the conditions of Proposition \ref{prop:3.1} and Assumption \textbf{(A2)} with $L_1 \in \mathbb{R}$, $L_2 \geq 0$, there exists a constant $C_T > 0$, independent of $N$, such that for all $N \in \mathbb{N}$,
		\begin{equation}\label{3.4}
			\max_{i \in \{1, \ldots, N\}} \sup_{t \in [0,T]} \mathbb{E}\left[ |X_t^i - X_t^{i,N}|^2 \right] \leq C_T \varphi(N).
		\end{equation}
		Furthermore, if $L_1 + L_2 < 0$ and $\gamma < 0$, then the constant can be made independent of time, i.e., there exists $C > 0$, independent of $N$ and $T$, such that
		\begin{equation}\label{3.5}
			\max_{i \in \{1, \ldots, N\}} \sup_{t \geq 0} \mathbb{E}\left[ |X_t^i - X_t^{i,N}|^2 \right] \leq C \varphi(N).
		\end{equation}
	\end{prp}
	
	\begin{proof}
		The proof of the theorem follows the classical approach presented in \cite{Tran2025}. It is important to emphasize that the main novelty of our result is the extension of the estimate to the setting with common noise.
		Observe that for any \( t \geq 0 \),
		\begin{align*}
			&X_t^i - X_t^{i,N} = \int_0^t \left( b(X_s^i, \mathcal{L}^{1}_{X_s^i}) - b(X_s^{i,N}, \mu_s^{\boldsymbol{X}^N}) \right) {{\rm{d}}}s + \int_0^t \left( \sigma(X_s^i, \mathcal{L}^{1}_{X_s^i}) - \sigma(X_s^{i,N}, \mu_s^{\boldsymbol{X}^N}) \right) {\rm{d}}W_s^i \\
			&+ \int_0^t \left( \sigma^{0}(X_s^i, \mathcal{L}^{1}_{X_s^i}) - \sigma^{0}(X_s^{i,N}, \mu_s^{\boldsymbol{X}^N}) \right) {\rm{d}}W_s^{0} + \int_0^t \int_{\mathbb{R}_0^d} \left( c(X_{s-}^i, \mathcal{L}^{1}_{X_{s-}^i}) - c(X_{s-}^{i,N}, \mu_{s-}^{\boldsymbol{X}^N}) \right) z \widetilde{N}^i({\rm{d}}s, {\rm{d}}z).
		\end{align*}
		Next, for \(\lambda \in \mathbb{R}\), by applying It\^o's formula and invoking Assumption \textbf{(A2)}, \(L_1 \in \mathbb{R}\), and \(L_2 \geq 0\), we deduce that for all \(t \geq 0\),
		\begin{equation}\label{nnn}
			\begin{aligned}
				&e^{-\lambda t} |X_t^i - X_t^{i,N}|^2 = \int_0^t e^{-\lambda s} \Big[ -\lambda |X_s^i - X_s^{i,N}|^2 + 2 \langle X_s^i - X_s^{i,N}, b(X_s^i, \mathcal{L}^{1}_{X_s^i}) - b(X_s^{i,N}, \mu_s^{\boldsymbol{X}^N}) \rangle \\
				&\quad+ |\sigma(X_s^i, \mathcal{L}^{1}_{X_s^i}) - \sigma(X_s^{i,N}, \mu_s^{\boldsymbol{X}^N})|^2 + |\sigma^{0}(X_s^i, \mathcal{L}^{1}_{X_s^i}) - \sigma^{0}(X_s^{i,N}, \mu_s^{\boldsymbol{X}^N})|^2 \Big] {\rm{d}}s \\
				&\quad+ 2 \int_0^t e^{-\lambda s} \bigg[ \langle X_s^i - X_s^{i,N}, (\sigma(X_s^i, \mathcal{L}^{1}_{X_s^i}) - \sigma(X_s^{i,N}, \mu_s^{\boldsymbol{X}^N})) {\rm{d}}W_s^i \rangle \\
				&\quad+ \langle X_s^i - X_s^{i,N}, (\sigma^{0}(X_s^i, \mathcal{L}^{1}_{X_s^i}) - \sigma^{0}(X_s^{i,N}, \mu_s^{\boldsymbol{X}^N})) {\rm{d}}W_s^{0} \rangle \bigg] \\
				&\quad+ \int_0^t \int_{\mathbb{R}_0^d} e^{-\lambda s} \left( |X_{s-}^i - X_{s-}^{i,N} + \big(c(X_{s-}^i, \mathcal{L}^{1}_{X_{s-}^i}) - c(X_{s-}^{i,N}, \mu_{s-}^{\boldsymbol{X}^N})\big) z|^2 - |X_{s-}^i - X_{s-}^{i,N}|^2 \right) \widetilde{N}^i({\rm{d}}s, {\rm{d}}z) \\
				&\quad+ \int_0^t \int_{\mathbb{R}_0^d} e^{-\lambda s} |(c(X_{s-}^i, \mathcal{L}^{1}_{X_{s-}^i}) - c(X_{s-}^{i,N}, \mu_{s-}^{\boldsymbol{X}^N})) z|^2 \nu({\rm{d}}z) {\rm{d}}s \\
				&\leq \int_{0}^{t} e^{-\lambda s} \left( -\lambda |X_s^{i} - X_s^{i,N}|^2 + 2\left\langle X_s^{i} - X_s^{i,N}, b(X_s^{i}, \mathcal{L}^{1}_{X_{s}^i}) - b(X_s^{i,N}, \mu_s^{\boldsymbol{X}^N}) \right\rangle \right) {\rm{d}}s \\
				&\quad+ \int_{0}^{t} e^{-\lambda s} \left( |\sigma(X_s^{i}, \mathcal{L}^{1}_{X_{s}^i}) - \sigma(X_s^{i,N}, \mu_s^{\boldsymbol{X}^N})|^2 + |\sigma^{0}(X_s^{i}, \mathcal{L}^{1}_{X_{s}^i}) - \sigma^{0}(X_s^{i,N}, \mu_s^{\boldsymbol{X}^N})|^2 \right) {\rm{d}}s \\
				&\quad+ \int_{0}^{t} e^{-\lambda s} |c(X_{s-}^{i}, \mathcal{L}^{1}_{X_{s-}^i}) - c(X_{s-}^{i,N}, \mu_{s-}^{\boldsymbol{X}^N})|^2 \int_{\mathbb{R}_0^d} |z|^2 \nu({\rm{d}}z) {\rm{d}}s \\
				&\quad+ 2 \int_{0}^{t} e^{-\lambda s} \bigg[ \left\langle X_s^{i} - X_s^{i,N}, \left( \sigma(X_s^{i}, \mathcal{L}^{1}_{X_{s}^i}) - \sigma(X_s^{i,N}, \mu_s^{\boldsymbol{X}^N}) \right) {\rm{d}}W_s^i \right\rangle \\
				&\quad+ \left\langle X_s^{i} - X_s^{i,N}, \left( \sigma^{0}(X_s^{i}, \mathcal{L}^{1}_{X_{s}^i}) - \sigma^{0}(X_s^{i,N}, \mu_s^{\boldsymbol{X}^N}) \right) {\rm{d}}W_s^{0} \right\rangle \bigg] \\
				&\quad+ \int_{0}^{t} \int_{\mathbb{R}_0^d} e^{-\lambda s} \left( |X_{s-}^{i} - X_{s-}^{i,N} + \left( c(X_{s-}^{i}, \mathcal{L}^{1}_{X_{s-}^{i}}) - c(X_{s-}^{i,N}, \mu_{s-}^{\boldsymbol{X}^N}) \right) z|^2 - |X_{s-}^{i} - X_{s-}^{i,N}|^2 \right) \widetilde{N}^i({\rm{d}}s, {\rm{d}}z) \\
				&\leq \int_{0}^{t} e^{-\lambda s} \left( -\lambda |X_s^{i} - X_s^{i,N}|^2 + L_1 |X_s^{i} - X_s^{i,N}|^2 + L_2 \mathcal{W}_2^2(\mathcal{L}^{1}_{X_{s}^i}, \mu_s^{\boldsymbol{X}^N}) \right) {\rm{d}}s \\
				&\quad+ 2 \int_{0}^{t} e^{-\lambda s} \bigg[ \left\langle X_s^{i} - X_s^{i,N}, \left( \sigma(X_s^{i}, \mathcal{L}^{1}_{X_{s}^i}) - \sigma(X_s^{i,N}, \mu_s^{\boldsymbol{X}^N}) \right) {\rm{d}}W_s^i \right\rangle \\
				&\quad+ \left\langle X_s^{i} - X_s^{i,N}, \left( \sigma^{0}(X_s^{i}, \mathcal{L}^{1}_{X_{s}^i}) - \sigma^{0}(X_s^{i,N}, \mu_s^{\boldsymbol{X}^N}) \right) {\rm{d}}W_s^{0} \right\rangle \bigg] \\
				&\quad+ \int_{0}^{t} \int_{\mathbb{R}_0^d} e^{-\lambda s} \left( |X_{s-}^{i} - X_{s-}^{i,N} + \left( c(X_{s-}^{i}, \mathcal{L}^{1}_{X_{s-}^{i}}) - c(X_{s-}^{i,N}, \mu_{s-}^{\boldsymbol{X}^N}) \right) z|^2 - |X_{s-}^{i} - X_{s-}^{i,N}|^2 \right) \widetilde{N}^i({\rm{d}}s, {\rm{d}}z).
			\end{aligned}
		\end{equation}
		By taking the expectation to \eqref{nnn}, we have for any \(\varepsilon > 0\),
		\begin{equation}
			\begin{split}\label{3.6}
				&e^{-\lambda t} \mathbb{E}\left[ |X_t^i - X_t^{i,N}|^2 \right]\\
				&\leq \int_0^t e^{-\lambda s}\bigg((-\lambda+L_1)\mathbb{E}\left[ |X_s^i - X_s^{i,N}|^2 \right]+L_2\mathbb{E}\left[ \mathcal{W}_2^2(\mathcal{L}^{1}_{X_s^{i}}, \mu_s^{\boldsymbol{X}^N}) \right]\bigg){\rm{d}}s\\
				&\leq \int_0^t e^{-\lambda s} \left( (-\lambda + L_1 + L_2 + \varepsilon) \mathbb{E}\left[ |X_s^i - X_s^{i,N}|^2 \right] + L_2 \left(1 + \frac{L_2}{\varepsilon}\right) \mathbb{E}\left[ \mathcal{W}_2^2(\mathcal{L}^{1}_{X_s^{i,N}}, \mu_s^{\boldsymbol{X}^N}) \right] \right){\rm{d}}s.
			\end{split}
		\end{equation}
		Next, according to \eqref{ppp}, \eqref{eee},  \eqref{qqq} and \(	\mathbb{E}\left[\mathcal{W}_2^2\big(\mathcal{L}^{1}_{X_s^i}, \mathcal{L}^{1}_{X_s^{i,N}}\big)\right] \leq \mathbb{E}\left[|X_s^i - X_s^{i,N}|^2\right]\), let \(|X_s^j - X_s^{j,N}|^2:=\max_{i \in \{1, \ldots, N\}}|X_s^i - X_s^{i,N}|^2\) we have
		\begin{equation}\label{ooo}
			\begin{split}
				\mathbb{E}\left[ \mathcal{W}_2^2(\mathcal{L}^{1}_{X_s^{i,N}}, \mu_s^{\boldsymbol{X}^N}) \right] 	
				&\leq \mathbb{E}[ \mathcal{W}_2(\mathcal{L}^{1}_{X_s^{i,N}}, \mathcal{L}^{1}_{X_s^{i}})+\mathcal{W}_2(\mathcal{L}^{1}_{X_s^{i}},\mu^{\boldsymbol{X}^N})+\mathcal{W}_2(\mu^{\boldsymbol{X}^N},\mu_s^{\boldsymbol{X}^N})]^2\\
				&\leq 3\mathbb{E}[ \mathcal{W}^2_2(\mathcal{L}^{1}_{X_s^{i,N}}, \mathcal{L}^{1}_{X_s^{i}})+\mathcal{W}^2_2(\mathcal{L}^{1}_{X_s^{i}},\mu^{\boldsymbol{X}^N})+\mathcal{W}^2_2(\mu^{\boldsymbol{X}^N},\mu_s^{\boldsymbol{X}^N})]\\
				&\leq 3\mathbb{E}[ \mathbb{E}\left[|X_s^i - X_s^{i,N}|^2\right]+\mathcal{W}^2_2(\mathcal{L}^{1}_{X_s^{i}},\mu^{\boldsymbol{X}^N})+\frac{1}{N} \sum_{i=1}^N |X_s^{i} - X_s^{i,N}|^2]\\
				&\leq 3\mathbb{E}[ \mathbb{E}\left[|X_s^i - X_s^{i,N}|^2\right]+\mathcal{W}^2_2(\mathcal{L}^{1}_{X_s^{i}},\mu^{\boldsymbol{X}^N})+ |X_s^{j} - X_s^{j,N}|^2]\\
				&\leq 6\mathbb{E}[  |X_s^{j} - X_s^{j,N}|^2]+3\mathbb{E}[\mathcal{W}^2_2(\mathcal{L}^{1}_{X_s^{i}},\mu^{\boldsymbol{X}^N})].
			\end{split}
		\end{equation}
		Moreover, by Proposition \ref{prop:3.1}, for any \( p \in [2, p_0] \), we have
		\[
		\max_{i \in \{1, \dots, N\}} \sup_{t \in [0,T]} \mathbb{E} \left[ \left| X^{i,N}_{t} \right|^p \right] \leq C_T,
		\]
		for some constant \( C_T > 0 \), together with Carmona and Delarue~\cite[Theorem 5.8]{Carmona2018} deduce that
		\begin{equation}\label{ddd}
		\mathbb{E}[\mathcal{W}^2_2(\mathcal{L}^{1}_{X_s^{i}},\mu^{\boldsymbol{X}^N})]\leq C \varphi(N).
		\end{equation}
		Combining \eqref{ooo} and \eqref{ddd}, we have
		\begin{equation}\label{uuu}
			\begin{aligned}
				\mathbb{E}\left[ \mathcal{W}_2^2(\mathcal{L}^{1}_{X_s^{i,N}}, \mu_s^{\boldsymbol{X}^N}) \right] 	&\leq 6\mathbb{E}[  |X_s^{j} - X_s^{j,N}|^2]+3\mathbb{E}[\mathcal{W}^2_2(\mathcal{L}^{1}_{X_s^{i}},\mu^{\boldsymbol{X}^N})]\\
		     	&\leq 6\mathbb{E}[  |X_s^{j} - X_s^{j,N}|^2]+3 C \varphi(N).
			\end{aligned}
		\end{equation}
		Consequently, it suffices to choose \( \lambda = L_1 + L_2 + \epsilon \) in \eqref{3.6} to conclude that
		\begin{equation}\label{bbb}
			\begin{split}
				&e^{-\lambda t} \mathbb{E}\left[ |X_t^i - X_t^{i,N}|^2 \right]\\
				&\leq \int_0^t e^{-\lambda s} \left( L_2 \left(1 + \frac{L_2}{\varepsilon}\right) \mathbb{E}\left[ \mathcal{W}_2^2(\mathcal{L}^{1}_{X_s^{i,N}}, \mu_s^{\boldsymbol{X}^N}) \right] \right){{\rm{d}}}s\\
				&\leq L_2 \left(1 + \frac{L_2}{\varepsilon}\right)\int_0^t e^{-\lambda s} \left( 6\mathbb{E}[  |X_s^{j} - X_s^{j,N}|^2]+3 C \varphi(N) \right){{{\rm{d}}}}s\\
				&=6L_2 \left(1 + \frac{L_2}{\varepsilon}\right)\int_0^t e^{-\lambda s}\mathbb{E}[|X_s^j-X_s^{j,N}|^2]{{{\rm{d}}}}s+3 C\varphi(N)L_2 \left(1 + \frac{L_2}{\varepsilon}\right)\int_0^t e^{-\lambda s}{{\rm{d}}}s.	
			\end{split}
		\end{equation}
	  Thus, we have 
		\[e^{-\lambda t} \mathbb{E}\left[ |X_t^j - X_t^{j,N}|^2 \right]\leq6L_2 \left(1 + \frac{L_2}{\varepsilon}\right)\int_0^t e^{-\lambda s}\mathbb{E}[|X_s^j-X_s^{j,N}|^2]{{{\rm{d}}}}s+3 C\varphi(N)L_2 \left(1 + \frac{L_2}{\varepsilon}\right)\int_0^t e^{-\lambda s}{{\rm{d}}}s. \]
		Then using Gr\"onwall's inequality, we have 
		\[
		\begin{split}
			\mathbb{E}\left[ |X_t^j - X_t^{j,N}|^2\right]
			&\leq \frac{3C\varphi(N)L_2\left(1 + \frac{L_2}{\varepsilon}\right)}{\lambda} e^{\int_0^t 6L_2\left(1 + \frac{L_2}{\varepsilon}\right) e^{\lambda (t-s)}{{\rm{d}}}s}\\
			&= \frac{3C\varphi(N)L_2\left(1 + \frac{L_2}{\varepsilon}\right)}{\lambda} 
			e^{-6L_2\left(1 + \frac{L_2}{\varepsilon}\right)(1-e^{\lambda t})/\lambda}\\
			&=C_T\varphi(N).\\
		\end{split}
		\]
		When \( L_1 + L_2 < 0 \) and \( \gamma = \gamma_1 + \gamma_2 < 0 \), one can choose \( \epsilon > 0 \) sufficiently small so that \( \lambda < 0 \). Hence, we have
		\[
		\begin{split}
			\mathbb{E}\left[ |X_t^j - X_t^{j,N}|^2\right]
			&\leq \frac{3C\varphi(N)L_2\left(1 + \frac{L_2}{\varepsilon}\right)}{\lambda} 
			e^{-6L_2\left(1 + \frac{L_2}{\varepsilon}\right)(1-e^{\lambda t})/\lambda}\\
			&\leq \frac{3C\varphi(N)L_2\left(1 + \frac{L_2}{\varepsilon}\right)}{\lambda}e^{-6L_2\left(1 + \frac{L_2}{\varepsilon}\right)/\lambda}\\
			&=C\varphi(N),
		\end{split}
		\]
		for any \( t \in [0, T] \), where the positive constant \( C \) does not depend on time.
	This completes the proof.
	\end{proof}
	\section{Tamed-Adaptive Euler-Maruyama (TAEM) Method}
	\subsection{Discrete-time and Continuous-time TAEM Method}
	Set
	\[
	t_0 := 0, \qquad X_{t_0} := X_0,
	\]
	where $X_0$ is the given initial random variable in \eqref{eq:MKV-SDE}.
	We now consider the tamed-adaptive EM scheme with step
	\begin{equation}\label{vvv}
		\Delta_{t_k}:=\frac{h}{1+\beta\,|b(X_{t_k},\mathcal L^1(X_{t_k}))|},\qquad  \beta>0,\ h\in(0,1],\qquad i.e.\qquad  t_{k+1}=t_k + \Delta_{t_k},
	\end{equation}
	and tamed drift
	\begin{equation}\label{bb}
	b_{\Delta_{t_k}}(x,\mu):=\frac{b(x,\mu)}{1+\Delta_{t_k}^{\alpha}|b(x,\mu)|},\qquad 0<\alpha<\frac{1}{2}.
	\end{equation}
	The TAEM discrete approximation is then defined by
	\begin{equation}\label{eq:TAEM-discrete}
		\begin{aligned}
			X_{t_{k+1}} &= X_{t_k} + b_{\Delta_{t_k}}\bigl(X_{t_k},\mathcal{L}^1(X_{t_k})\bigr)\,\Delta_{t_k}  + \sigma\bigl(X_{t_k},\mathcal{L}^1(X_{t_k})\bigr)\,\Delta W_{t_k} \\
			&\quad + c\bigl(X_{t_k},\mathcal{L}^1(X_{t_k})\bigr)\,\Delta Z_{t_k} + \sigma^0\bigl(X_{t_k},\mathcal{L}^1(X_{t_k})\bigr)\,\Delta W^0_{t_k},
		\end{aligned}
	\end{equation}
	where $\Delta W_{t_k} = W(t_{k+1})-W(t_k)$, $\Delta Z_{t_k}=Z(t_{k+1})-Z(t_k)$, and $\Delta W^0_{t_k}=W^0(t_{k+1})-W^0(t_k)$. The iteration stops when $t_k\ge T$. Note that at each time $t_k$,  the random variables \(\Delta_{t_k}\) and all coefficients are \(\mathcal F_{t_k}\)-measurable.
	
	The piecewise constant interpolant is defined by
	\begin{equation}\label{eq:step-interpolant}
		\overline{X}^\Delta(t) := X_{t_k}, \qquad t\in[t_k,t_{k+1}),
	\end{equation}
	and the continuous-time TAEM approximation is
	\begin{equation}\label{eq:TAEM-cont-expanded}
		\begin{aligned}
			X^\Delta(t) &= X_0 
			+ \int_0^t b_{\Delta_{t_s}}\bigl(\overline{X}^\Delta(s),\mathcal{L}^1(\overline{X}^\Delta(s))\bigr)\, \mathrm{d}s
			+ \int_0^t \sigma\bigl(\overline{X}^\Delta(s),\mathcal{L}^1(\overline{X}^\Delta(s))\bigr)\, \mathrm{d}W(s) \\
			&\quad + \int_0^t c\bigl(\overline{X}^\Delta(s),\mathcal{L}^1(\overline{X}^\Delta(s))\bigr)\, \mathrm{d}Z(s) 
			+ \int_0^t \sigma^0\bigl(\overline{X}^\Delta(s),\mathcal{L}^1(\overline{X}^\Delta(s))\bigr)\, \mathrm{d}W^0(s),
		\end{aligned}
	\end{equation}
	where
	\[
	\Delta_{t_s} := \Delta_{t_k} = \frac{h}{1 + \beta |b(X_{t_k}, \mathcal{L}^1(X_{t_k}))|} \le h, \quad {s} \in [t_k, t_{k+1}).
	\]

	\subsection{Reachability of \(\Delta_{t_k}\) over time \(T\)}
	\begin{thm}
		\label{thm:disc-reach}
		Suppose that assumptions \textup{\textbf{(A1)-(A7)}} hold. Let
		\(\Delta_{t_k}\) be given by \eqref{vvv}
		and define the stopping time
		\(\tau_T := \inf \left\{ n \ge 0 : S_n:=\sum_{k=0}^{n-1} \Delta_{t_k} \ge T \right\}.\)
		Then, $\mathbb{E}[\tau_T] < \infty$. In particular, $\tau_T$ is almost surely finite.
	\end{thm}
	In order to prove the aforementioned theorem, the following lemmas are presented.

\begin{lem}\label{thm:TAEM_moment_bounds}
	Suppose that assumptions \textbf{(A1)-(A7)} hold. Let $X^\Delta(t)$ be the continuous-time TAEM approximation given in \eqref{eq:TAEM-cont-expanded}. Then, for any real number $l \ge 2$, there exists a constant $C_{T,l}>0$, depending only on $T$, $l$ such that
	\[
	\sup_{t\le T} \mathbb E[|X^\Delta(t)|^l] \le C_{T,l}.
	\]
	In particular, since $X^\Delta(t_k) = X_{t_k}$ for all grid points $t_k$, we also have
    \[
    \sup_{t_k\le T} \mathbb{E}\big[|X_{t_k}|^l\big] \le C_{T,l}.
    \]
\end{lem}

	\begin{proof}
		We divide the proof into several steps.
		
		\paragraph{Step 1: It\^o's formula for $|X^\Delta(t)|^l$.}
		
		Let $\mu_s = \mathcal{L}^1(\overline{X}^\Delta(s))$ and define
		\begin{align*}
			f(s) &:= b_{\Delta_{t_s}}(\overline{X}^\Delta(s), \mu_s), \qquad
			g(s) := \sigma(\overline{X}^\Delta(s), \mu_s), \\
			g_0(s) &:= \sigma^0(\overline{X}^\Delta(s), \mu_s), \qquad
			j(s) := c(\overline{X}^\Delta(s), \mu_s).
		\end{align*}		
		Applying It\^o's formula to $|X^\Delta(t)|^l$ for $l \ge 2$
		\begin{equation}\label{mm}
			\begin{aligned}
				&|X^\Delta(t)|^l 
				= |X_0|^l + l \int_0^t |X^\Delta(s)|^{l-2} \,\langle X^\Delta(s), f(s)\rangle \,\mathrm ds \\
				&\quad + l \int_0^t |X^\Delta(s)|^{l-2} \,\langle X^\Delta(s), g(s)\,\mathrm dW(s)\rangle
				+ l \int_0^t |X^\Delta(s)|^{l-2} \,\langle X^\Delta(s), g_0(s)\,\mathrm dW^0(s)\rangle \\
				&\quad + \frac{l(l-2)}{2} \int_0^t |X^\Delta(s)|^{l-4} 
				\Big( |g(s)^\top X^\Delta(s)|^2 + |g_0(s)^\top X^\Delta(s)|^2 \Big)\,\mathrm ds \\
				&\quad + \frac{l}{2} \int_0^t |X^\Delta(s)|^{l-2} 
				\Big( \|g(s)\|^2 + \|g_0(s)\|^2 \Big)\,\mathrm ds \\
				&\quad + 2\int_0^t \int_{\mathbb{R}_0^d} 
				\Big[ |X^\Delta(s-) + j(s)z|^l - |X^\Delta(s-)|^l \Big]\,\tilde N(\mathrm ds,\mathrm dz) \\
				&\quad + \int_0^t \int_{\mathbb{R}_0^d} 
				\Big[ 2|X^\Delta(s-) + j(s)z|^l - 2|X^\Delta(s-)|^l 
				- l|X^\Delta(s-)|^{l-2}\,\langle X^\Delta(s-), j(s)z\rangle \Big]\,
				\nu(\mathrm dz)\,\mathrm ds .
			\end{aligned}
		\end{equation}

	\paragraph{Step 2: Drift term estimation.}

   For the drift term $f(s) := b_{\Delta_{t_s}}(\overline{X}^\Delta(s), \mu_s)$, by the taming property, we obtain
   \[
   |f(s)| \le |b(\overline{X}^\Delta(s), \mu_s)|.
   \] 
   Using \textbf{(A3)} with $\overline{x} = 0$ and $\overline{\mu} = \delta_0$, there exist constants \( \tilde{L} > 0 \) and \( \ell > 0 \) such that 
   \[
   |b(\overline{X}^\Delta(s), \mu_s)| \le |b(0, \delta_0)| + \tilde{L}(1 + |\overline{X}^\Delta(s)|^\ell)\left(|\overline{X}^\Delta(s)| + \mathcal{W}_2(\mu_s, \delta_0)\right).
   \]
   Since $\mathcal{W}_2(\mu_s, \delta_0) = \left(\mathbb{E}[|\overline{X}^\Delta(s)|^2\mid \mathcal{F}^0]\right)^{1/2}$, we have
   \begin{align*}
   	 |b(\overline{X}^\Delta(s), \mu_s)| &\le C + \tilde{L}(1 + |\overline{X}^\Delta(s)|^\ell)|\overline{X}^\Delta(s)| + \tilde{L}(1 + |\overline{X}^\Delta(s)|^\ell)\left(\mathbb{E}[|\overline{X}^\Delta(s)|^2\mid \mathcal{F}^0]\right)^{1/2}\\
   	 &\le C + \tilde{L}(1 + |\overline{X}^\Delta(s)|^\ell)|\overline{X}^\Delta(s)|+\frac{\tilde{L}}{2}(1 + |\overline{X}^\Delta(s)|^\ell)^2 + \frac{\tilde{L}}{2}\mathbb{E}[|\overline{X}^\Delta(s)|^2\mid \mathcal{F}^0]\\
   	 &\le C\left(1 + |\overline{X}^\Delta(s)|^{\ell+1} + |\overline{X}^\Delta(s)|^{2\ell} + \mathbb{E}[|\overline{X}^\Delta(s)|^2\mid \mathcal{F}^0]\right).
   \end{align*}
 Now estimate the drift term, by choosing $l\geq \max\{\,2\ell,\ell+1,2\}, 
$ we have
 \begin{align*}
	&\left| l \int_0^t |X^\Delta(s)|^{l-2} \langle X^\Delta(s), f(s) \rangle \,\mathrm{d}s \right| \le l \int_0^t |X^\Delta(s)|^{l-1} |f(s)| \,\mathrm{d}s \\
	&\le C_l \int_0^t |X^\Delta(s)|^{l-1} \left(1 + |\overline{X}^\Delta(s)|^{\ell+1} + |\overline{X}^\Delta(s)|^{2\ell} + \mathbb{E}[|\overline{X}^\Delta(s)|^2\mid \mathcal{F}^0]\right) \,\mathrm{d}s\\
	& \le C_l \int_0^t \left(1 + |X^\Delta(s)|^l + |\overline{X}^\Delta(s)|^l + \mathbb{E}[|\overline{X}^\Delta(s)|^l\mid \mathcal{F}^0]\right) \,\mathrm{d}s.
 \end{align*}

  \paragraph{Step 3: Diffusion terms estimation.}
		
  From Assumption \textbf{(A1)}, we have
\[	|g(s)|^2 + |g_0(s)|^2 \le L\left(1 + |\overline{X}^\Delta(s)|^2 + \mathcal{W}_2^2(\mu_s, \delta_0)\right)\le L\left(1 + |\overline{X}^\Delta(s)|^2 + \mathbb{E}[|\overline{X}^\Delta(s)|^2\mid \mathcal{F}^0]\right).
\]
Then we obtain
\begin{align*}
	&\frac{l(l-2)}{2} \int_0^t |X^\Delta(s)|^{l-4} 
	\Big( |g(s)^\top X^\Delta(s)|^2 + |g_0(s)^\top X^\Delta(s)|^2 \Big)\,\mathrm ds + \frac{l}{2} \int_0^t |X^\Delta(s)|^{l-2} 
	\Big( \|g(s)\|^2 + \|g_0(s)\|^2 \Big)\,\mathrm ds \\& \leq C_l \int_0^t |X^\Delta(s)|^{l-2} \left( |g(s)|^2 + |g_0(s)|^2 \right) \,\mathrm{d}s\\
	&\leq C_{l,L} \int_0^t \left[ 1 + |X^\Delta(s)|^l + |\overline{X}^\Delta(s)|^l + \mathbb{E}[|\overline{X}^\Delta(s)|^l\mid \mathcal{F}^0] \right] \,\mathrm{d}s.
\end{align*}
	
		\paragraph{Step 4: Jump term estimation.}
		
		Write the jump contribution as $M_J(t)+D_J(t)$, where the martingale part is
		\[
			M_J(t)
			= 2\int_0^t\!\!\int_{\mathbb{R}_0^d} 
			\Big[|X^\Delta(s{-})+j(s)z|^{l}-|X^\Delta(s{-})|^{l}
			\Big]\,
			\tilde N(\mathrm dz,\mathrm ds),
		\]
		and the compensator is
		\[
			D_J(t)
			= \int_0^t \int_{\mathbb{R}_0^d} 
			\Big[ 2|X^\Delta(s-) + j(s)z|^l - 2|X^\Delta(s-)|^l 
			- l|X^\Delta(s-)|^{l-2}\,\langle X^\Delta(s-), j(s)z\rangle \Big]\,
			\nu(\mathrm dz)\,\mathrm ds .
		\]
		For $l\ge 2$, by Taylor’s formula there exists $C_l>0$ such that for all $x,z$,
		\[
		\big||x+jz|^{l}-|x|^{l}-l|x|^{l-2}\langle x,jz\rangle\big|
		\;\le\; C_l\big(|x|^{l-2}|jz|^{2}+|jz|^{l}\big),
		\]
		and also
		\[
		\big||x+jz|^{l}-|x|^{l}\big|
		\;\le\; C_l\big(|x|^{l-1}|jz|^{2}+|jz|^{l}\big).
		\]
		Hence,
		\[
			|D_J(t)|
			\le C_l \int_0^t\!\!\int_{\mathbb{R}_0^d}
			\Big(|X^\Delta(s)|^{l-2}|j(s)|^{2}|z|^{2}+2|j(s)|^{l}|z|^{l}+|X^\Delta(s)|^{l-1}|j(s)|^{2}|z|^{2}\Big)\,\nu(\mathrm dz)\,\mathrm ds.
		\]	
		By Assumption \textbf{(A5)}, we have
		\[
		|j(s)|
		\;\le\;L_3\!\left(1+|\overline X^\Delta(s)|
		+\big[\mathbb E\!\left(|\overline X^\Delta(s)|^{2}\mid\mathcal F^{0}\right)\big]^{1/2}\right),
		\]
		so that $|j(s)|^{2}$ and $|j(s)|^{l}$ are bounded by
		\(
		1+|\overline X^\Delta(s)|^{l}+\mathbb E\!\left(|\overline X^\Delta(s)|^{l}\mid\mathcal F^{0}\right).\)
		Moreover, Assumption \textbf{(A4)} ensures 
	        \(
			\int_{|z|<1}|z|^{2}\nu(\mathrm dz)<\infty
			\)
			and
			\(
			\int_{|z|\geq 1}|z|^{l}\nu(\mathrm dz)<\infty.
			\)
		Collecting the above, we obtain
		\[
		|D_J(t)|
		\;\le\; C_{l,L_3,\nu}\!\int_0^t
		\Big[1+|X^\Delta(s)|^{l}
		+|\overline X^\Delta(s)|^{l}
		+\mathbb E\!\left(|\overline X^\Delta(s)|^{l}\mid\mathcal F^{0}\right)\Big]\,
		\mathrm ds .
		\]

		\paragraph{Step 5: Taking expectations and Gr\"onwall's inequality.}
		
		Taking expectations to \eqref{mm}, all martingale terms vanish. Let
		\[
		m(t) = \sup_{0 \le u \le t} \mathbb{E}[|X^\Delta(u)|^l],
		\]
		we obtain
		\[
		\mathbb{E}[|X^\Delta(t)|^l] \le \mathbb{E}[|X_0|^l] + C \int_0^t \left[ 1 + \mathbb{E}[|X^\Delta(s)|^l] + \mathbb{E}[|\overline{X}^\Delta(s)|^l] \right] \,\mathrm{d}s.
		\]
		Since $\overline{X}^\Delta(s) = X_{t_k}$ for $s \in [t_k, t_{k+1})$, we have $\mathbb{E}[|\overline{X}^\Delta(s)|^l] \le m(s)$ for $s \le t$. Therefore,
		\[
		m(t) \le \mathbb{E}[|X_0|^l] + C \int_0^t \left[ 1 + 2m(s) \right] \,\mathrm{d}s.
		\]
		By Gr\"onwall's inequality, we get
		\[
		m(T) \le \left( \mathbb{E}[|X_0|^l] + CT \right) e^{2CT} =: C_{T,l}.
		\]
		This completes the proof of the lemma.
	\end{proof}

	\begin{proof}[\textbf{Proof of Theorem \ref{thm:disc-reach}}]
		According to Assumption \textbf{(A3)}, we have
		\[
		|b(x,\mu)-b(0,\delta_0)| \le \tilde{L}(1+|x|^\ell+|0|^\ell)\,(|x-0|+\mathcal W_2(\mu,\delta_0)) 
		= \tilde{L}(1+|x|^\ell)(|x|+\mathcal W_2(\mu,\delta_0)).
		\]
		Taking
		$x=X_{t_k},\ \mu=\mathcal L^1(X_{t_k})$, we have
		\begin{align*}
			|b(X_{t_k},\mathcal L^1(X_{t_k})|
			&\le C_0 + L(1+|X_{t_k}|^\ell)(|X_{t_k}|+\mathcal W_2(\mathcal L^1(X_{t_k},\delta_0))\\
			&\le C_0 + L(1+|X_{t_k}|^\ell)\Big(|X_{t_k}|+(\mathbb E[|X_{t_k}|^2\mid \mathcal{F}^0])^{1/2}\Big),
		\end{align*}
		where \(C_0=|b(0,\delta_0)|. \)
		By H\"older’s inequality and Lemma~\ref{thm:TAEM_moment_bounds}, both
		$\mathbb E[(1+|X_{t_k}|^\ell)|X_{t_k}|]$ and 
		$\mathbb E[1+|X_{t_k}|^\ell\cdot(\mathbb E|X_{t_k}|^2)^{1/2}]$
		are bounded by a constant depending only on $L$ and $C_{T,\ell}$.
		Hence there exists $M = M(L,C_{T,\ell})$ such that,
		\[
		\mathbb E[|b(X_{t_k},\mathcal L^1(X_{t_k}))|]
		\le C_0 + L\mathbb E\big[(1+|X_{t_k}|^\ell)|X_{t_k}|\big]
		+ L\mathbb E[1+|X_{t_k}|^\ell]\cdot (\mathbb E[|X_{t_k}|^2])^{1/2}
		\le M.
		\]	
		Define \(f(y)=\dfrac{h}{1+\beta y}\) for \(y\ge0\). Note \(f\) is convex and nonnegative on \([0,\infty)\). By Jensen's inequality,
		\[
		\mathbb{E}[\Delta_{t_k}]=\mathbb{E}\big[f(|b(X_{t_k},\mathcal L^1(X_{t_k}))|)\big]
		\ge f\big(\mathbb{E}[|b(X_{t_k},\mathcal L^1(X_{t_k}))|]\big) \ge \frac{h}{1+\beta M}=:\,c>0.
		\]
		Due to \(S_{\tau_T}-T\leq \Delta_{t_{\tau_T-1}}\leq h\). Fix a constant truncation integer $N$, we have
		\begin{equation}\label{nn}
			S_{\tau_T\wedge N} \leq S_{\tau_T}\le T + h.
		\end{equation}
		Taking expectations to \eqref{nn}, we have
		\begin{align*}
		\mathbb{E}[S_{\tau_T\wedge N}]
		&=\mathbb{E}[\sum_{k=0}^{\tau_T\wedge N-1}\Delta_{t_k}]=\mathbb{E}[\sum_{k=0}^{N-1}\Delta_{t_k}\mathbf{1}_{\{\tau_T\wedge N>k\}}]= \sum_{k=0}^{N-1}\mathbb{E}[\Delta_{t_k} \mathbf{1}_{\{\tau_T>k\}}]\\
		&\ge \sum_{k=0}^{N-1} \mathbb{E}[\Delta_{t_k}]\,\mathbb{E}[ \mathbf{1}_{\{\tau_T>k\}}] 
		\ge c\,\mathbb{E}[\sum_{k=0}^{N-1} \mathbf{1}_{\{\tau_T>k\}}]
		= c\,\mathbb{E}[\tau_T\wedge N].
		\end{align*}
		Combining with \(\mathbb{E}[S_{\tau_T\wedge N}]\le T+h\) yields
		\[
		c\,\mathbb{E}[\tau_T\wedge N]\le T+h.
		\]
		Let \(N\to\infty\) and apply monotone convergence to get \(c\,\mathbb{E}[\tau_T]\le T+h\), so we have \(\mathbb{E}[\tau_T]<\infty\). Therefore \(\mathbb{P}(\tau_T=\infty)=0\) and \(\tau_T<\infty\) almost surely.
	\end{proof}
	\subsection{Convergence of TAEM Method under assumptions (A1)-(A7)}
	
	We now establish strong convergence of the TAEM scheme \eqref{eq:TAEM-discrete}-\eqref{eq:TAEM-cont-expanded} under assumptions \textbf{(A1)}-\textbf{(A7)}. Fix $R>0$ and define the stopping time
	\begin{equation}\label{rrr}
		\tau_R := \inf \bigl\{ t \ge 0 : |X(t)| \ge R \ \text{or}\ |X^\Delta(t)| \ge R \bigr\}.
	\end{equation}
	Consider the stopped processes
	\[
	X^R(t) := X(t \wedge \tau_R), \qquad X^{\Delta,R}(t) := X^\Delta(t \wedge \tau_R),
	\]
	and set
	\[
	e^R(t) := X^{\Delta,R}(t) - X^R(t), \qquad  e(t):=X^\Delta(t) - X(t).
	\]
\begin{thm}\label{cc}
	Suppose that assumptions \textup{\textbf{(A1)-(A7)}} hold. 
	Let \(X(t)\) be the unique strong solution of \eqref{eq:MKV-SDE}, and 
	let \(X^\Delta(t)\) be the continuous-time TAEM approximation given in \eqref{eq:TAEM-cont-expanded}, with
	adaptive time step
	\[
	\Delta_{t_k} = \frac{h}{1+\beta\,|b(X_{t_k},\mathcal L^1(X_{t_k}))|}.
	\]
	Then for any \(\varepsilon\in(0,1)\) and any \(p>2\), there exist constants \(C_{T,\varepsilon,p}>0\) and \(C_{T,p}>0\), independent of \(h\), and a constant \(C>0\) depending only on \(\ell>0\), such that
	\[
	\mathbb{E}\!\left[ \sup_{0 \le t \le T} |X^\Delta(t) - X(t)|^2 \right] 
	\;\le\;
	C_{T,\varepsilon,p}\, h^{1 - \varepsilon} \, (\log(1/h))^{C}
	\;+\;
	C_{T,p}\,(\log(1/h))^{-\frac{p-2}{2\ell}} .
	\]
	In particular, the TAEM method is strongly convergent in the sense that
	\[
	\lim_{h\to 0}\mathbb{E}\!\left[ \sup_{0 \le t \le T} |X^\Delta(t) - X(t)|^2 \right] = 0 .
	\]
\end{thm}

	In order to prove the aforementioned theorem, the following lemmas are presented.

		\begin{lem}\label{ll}
		Let $\overline{X}^\Delta(t)$ be the piecewise constant interpolant defined in \eqref{eq:step-interpolant}, $X^\Delta(t)$ be the continuous-time TAEM approximation defined in \eqref{eq:TAEM-cont-expanded}, and $\tau_R$ be the stopping time defined in \eqref{rrr},
		there exists a constant $C_{R}>0$ such that
		\[
			\int_0^T \mathbb E|X^\Delta(s)-\overline X^\Delta(s)|^2\,{\rm{d}}s\ \le\ T\,C_R\,h.
		\]
	\end{lem}

	\begin{proof}
		$\exists~ t_k < \tau_R$, for $s \in [t_k, t_{k+1}) \cap [0,\tau_R]$, by \eqref{eq:TAEM-cont-expanded} we have
		\begin{align*}
			X^\Delta(s) &= X_{t_k}
			+ \int_{t_k}^s b_{\Delta_{t_k}}\bigl(X_{t_k},\mathcal{L}^1(X_{t_k})\bigr)\, \mathrm{d}r
			+ \int_{t_k}^s \sigma\bigl(X_{t_k},\mathcal{L}^1(X_{t_k})\bigr)\, \mathrm{d}W(r) \\
			&\quad + \int_{t_k}^s c\bigl(X_{t_k},\mathcal{L}^1(X_{t_k})\bigr)\, \mathrm{d}Z(r) 
			+ \int_{t_k}^s \sigma^0\bigl(X_{t_k},\mathcal{L}^1(X_{t_k})\bigr)\, \mathrm{d}W^0(r).
		\end{align*}
		Hence,
		\begin{align*}
			\mathbb{E}\big|X^\Delta(s) - \overline{X}^\Delta(s)\big|^2
			&\leq 4\mathbb{E}\bigg[\Big|\int_{t_k}^s b_{\Delta_{t_k}}(X_{t_k},\mathcal{L}^1(X_{t_k}))\, {\rm{d}}r \Big|^2+ \Big|\int_{t_k}^s \sigma(X_{t_k},\mathcal{L}^1(X_{t_k}))\, {\rm{d}}W(r) \Big|^2\\
			&\quad + \Big|\int_{t_k}^s c(X_{t_k},\mathcal{L}^1(X_{t_k}))\, {\rm{d}}Z(r) \Big|^2+\Big|\int_{t_k}^s \sigma^0(X_{t_k},\mathcal{L}^1(X_{t_k}))\, {\rm{d}}W^0(r) \Big|^2\bigg].
		\end{align*}
		\textbf{Step 1. Bound of the drift term.}
		By Assumption \textbf{(A3)} and the definition of $\tau_R$, we have $|X_{t_k}|\le R$ and $\mathcal W_2(\mathcal L^1(X_{t_k}),\delta_0)\le R $. Hence,
		\[|b_{\Delta_{t_k}}(X_{t_k},\mathcal{L}^1(X_{t_k}))|\leq
		|b(X_{t_k},\mathcal L^1(X_{t_k}))|
		\le |b(0,\delta_0)| + 2\tilde L(1+R^\ell)R =: B_R.
		\]
		Then
		\[
		\mathbb E\Big|\int_{t_k}^s b_{\Delta_{t_k}}(X_{t_k},\mathcal{L}^1(X_{t_k}))\, \mathrm{d}r\Big|^2
		\le (s-t_k)^2 B_R^2.
		\]
		\textbf{Step 2. Bound of the diffusion and jump terms.}
		By Assumption \textbf{(A7)}, for any $|x|$, $\mathcal W_2(\mathcal L^1(X_{t_k}),\delta_0)\le R $, we have
		\[
		|\sigma(x,\mu)|^2 + |\sigma^{0}(x,\mu)|^2 + |c(x,\mu)|^2\!\int_{\mathbb R_0^d} |z|^2\nu(\mathrm{d}z)
		\le 2\bigl(|\sigma(0,\delta_0)|^2 + |\sigma^{0}(0,\delta_0)|^2 + |c(0,\delta_0)|^2I_2\bigr)
		+ 2(\tilde L_1+\tilde L_2)R^2,
		\]
		where \(I_2 := \int_{\mathbb R_0^d} |z|^2\nu(\mathrm{d}z)\).
		Denote this constant by
		\[
		K_R := 2\bigl(|\sigma(0,\delta_0)|^2 + |\sigma^{0}(0,\delta_0)|^2 + |c(0,\delta_0)|^2I_2\bigr)
		+ 2(\tilde L_1+\tilde L_2)R^2.
		\]
		Then, by the It\^o isometry and the corresponding property for L\'evy jumps,
		\begin{align*}
			&\mathbb{E}\Big|\int_{t_k}^s \sigma(X_{t_k},\mathcal{L}^1(X_{t_k}))\,\mathrm{d}W(r)\Big|^2
			+\mathbb{E}\Big|\int_{t_k}^s \sigma^{0}(X_{t_k},\mathcal{L}^1(X_{t_k}))\,\mathrm{d}W^0(r)\Big|^2\\
			&\qquad
			+\mathbb{E}\Big|\int_{t_k}^s c(X_{t_k},\mathcal{L}^1(X_{t_k}))\,\mathrm{d}Z(r)\Big|^2\le 3K_R (s-t_k).
		\end{align*}
		
		Combining the above bounds yields
		\[
		\mathbb E|X^\Delta(s)-\overline X^\Delta(s)|^2
		\le 4\Big[(s-t_k)^2 B_R^2 + 3K_R (s-t_k)\Big].
		\]
	Therefore, since $s\in[0,T]$ is arbitrary, we obtain
		\[
		\mathbb E|X^\Delta(s)-\overline X^\Delta(s)|^2 \le 4\big(B_R^2 + 3K_R\big)h =: C_R\,h,\qquad \forall s\in[0,T].
		\]
		Hence,
		\[
	\int_0^T \mathbb E|X^\Delta(s)-\overline X^\Delta(s)|^2\,{\rm{d}}s\ \le\ T\,C_R\,h,
		\]
		where
		\begin{equation}\label{oo}
			\begin{aligned}
				&C_{R} := 4\big(B_R^2 + 3K_R\big),
				\quad
				B_R = |b(0,\delta_0)| + 2\tilde L(1+R^\ell)R,\\&
				K_R = 2\bigl(|\sigma(0,\delta_0)|^2 + |\sigma^{0}(0,\delta_0)|^2 + |c(0,\delta_0)|^2I_2\bigr)
				+ 2(\tilde L_1+\tilde L_2)R^2.
			\end{aligned}
		\end{equation}
	\end{proof}
	
\begin{lem}\label{final} 
	Suppose that there exist constants \( C_{R,1}, C_R^{(\ast)} > 0 \) such that
	\[
	\mathbb{E}\bigg[ \sup_{0 \le t \le T} |e^R(t)|^2 \bigg]
	\le 5 C_{R,1} h^{2\alpha}
	+ C_R^{(\ast)} \int_0^T \mathbb{E}\big[ |\overline{X}^\Delta(s) - X(s)|^2 \cdot \mathbf{1}_{\{s \le \tau_R\}} \big] \,{\rm d}s.
	\]
	Then,
	\[
	\mathbb{E}\bigg[ \sup_{0 \le t \le T} |e^R(t)|^2 \bigg]
	\le \Big( 5 C_{R,1} h^{2\alpha} + 8 C_R^{(\ast)} T (B_R^2 + 3 K_R) h \Big) e^{2 C_R^{(\ast)} T}.
	\]
\end{lem}

\begin{proof} 
	Due to Lemma~\ref{ll}, we have
	\begin{equation}\label{ccc}
		\begin{aligned}      &\mathbb{E}\Big[\sup_{0 \le t \le T} |e^R(t)|^2 \Big] \le 5 C_{R,1} h^{2\alpha}
			+ C_R^{(\ast)} \int_0^T \mathbb E\big|\overline X^\Delta(s)-X(s)\big|^2\,\mathbf{1}_{\{s\le \tau_R\}}\,{\rm d}s\\
			&\le 5 C_{R,1} h^{2\alpha}
			+ C_R^{(\ast)} \int_0^T\!\Big(
			2\,\mathbb{E}\Big[\sup_{0\le r\le s}|e^R(r)|^2\Big]
			+ 2\,\mathbb{E}\big|X^\Delta(s)-\overline X^\Delta(s)\big|^2
			\Big){\rm d}s \\
			&\le 5 C_{R,1} h^{2\alpha}
			+ 2C_R^{(\ast)} \int_0^T \mathbb{E}\Big[\sup_{0\le r\le s}|e^R(r)|^2\Big]\,{\rm d}s
			+ 8C_R^{(\ast)}T\big(B_R^2+3K_R\big)\,h.
		\end{aligned} 
	\end{equation} 
 By using Gr\"onwall's inequality to \eqref{ccc}, we have \[
\mathbb E\Big[\sup_{0\le t\le T}|e^R(t)|^2\Big]
\le \big(5 C_{R,1} h^{2\alpha}+ 8C_R^{(\ast)}T(B_R^2+3K_R)\,h\big)\,e^{2C_R^{(\ast)}T}.
\]
\end{proof}

	\begin{proof}[\textbf{Proof of Theorem \ref{cc}}]
		For $t\in [0,\tau_R]$, the coefficients are globally Lipschitz with constants depending only on $R$.		
		Note that $e^R(t) = X^{\Delta,R}(t) - X^R(t)$. Subtracting the integral equations gives
		\[
		\begin{aligned}
			e^R(t) &= \int_0^{t \wedge \tau_R} \big( b_{\Delta_{t_s}}(\overline{X}^\Delta(s),\bar{\mu}_s) - b(\overline{X}^\Delta(s),\bar{\mu}_s) \big) \, \mathrm{d}s  + \int_0^{t \wedge \tau_R} \big( b(\overline{X}^\Delta(s),\bar{\mu}_s) - b(X(s),\mu_s) \big) \, \mathrm{d}s \\
			&\quad + \int_0^{t \wedge \tau_R} \big( \sigma(\overline{X}^\Delta(s),\bar{\mu}_s) - \sigma(X(s),\mu_s) \big) \, \mathrm{d}W(s)  + \int_0^{t \wedge \tau_R} \big( c(\overline{X}^\Delta(s),\bar{\mu}_s) - c(X(s),\mu_s) \big) \, \mathrm{d}Z(s) \\
			&\quad + \int_0^{t \wedge \tau_R} \big( \sigma^0(\overline{X}^\Delta(s),\bar{\mu}_s) - \sigma^0(X(s),\mu_s) \big) \, \mathrm{d}W^0(s) =:e_1(t)+e_2(t)+e_3(t)+e_4(t)+e_5(t),
		\end{aligned}
		\]
		where $\bar{\mu}_s := \mathcal{L}^1(\overline{X}^\Delta(s))$, $\mu_s := \mathcal{L}^1(X(s))$. 
		Applying $(a_1+\cdots+a_5)^2 \le 5(a_1^2+\cdots+a_5^2)$ and the BDG inequality, we obtain
		\begin{equation}\label{eR}
			\begin{aligned}
				\mathbb{E}\Big[\sup_{0 \le t \le T} |e^R(t)|^2 \Big]
				&\le 5\,\mathbb{E}\Big[\sup_{0 \le t \le T} |e_1(t)|^2 \Big] + 5\,\mathbb{E}\Big[\sup_{0 \le t \le T} |e_2(t)|^2 \Big] 
				+ 5\,\mathbb{E}\Big[\sup_{0 \le t \le T} |e_3(t)|^2 \Big] \\
				&\quad + 5\,\mathbb{E}\Big[\sup_{0 \le t \le T} |e_4(t)|^2 \Big] + 5\,\mathbb{E}\Big[\sup_{0 \le t \le T} |e_5(t)|^2 \Big] \\
				&=: 5(I_1 + I_2 + I_3 + I_4 + I_5),
			\end{aligned}
		\end{equation}
		where \(I_i=\mathbb{E}\big[\sup_{0\le t\le T}|e_i(t)|^2\big]\).

		\noindent\textbf{Step 1: Estimate of $I_1$.}
		On $[0,\tau_R]$ we have $|\overline X^\Delta(s)|\le R$ and $\mathcal W_2(\bar\mu_s,\delta_0)\le R$. By Assumption \textbf{(A3)},
		\[
		|b(\overline X^\Delta(s),\bar\mu_s)|
		\le |b(0,\delta_0)| + 2\tilde L (1+R^\ell)R
		= B_R .
		\]
		Hence for $s\le\tau_R$,
		\[
		\big|b_{\Delta_{t_s}}(\overline X^\Delta(s),\bar\mu_s)-b(\overline X^\Delta(s),\bar\mu_s)\big|
		\le \Delta_{t_s}^\alpha B_R^2 \le h^\alpha B_R^2.
		\]
		Therefore,
		\[
		\sup_{0\le t\le T}\Big|\int_0^{t\wedge\tau_R}\big(b_{\Delta_{t_s}}(\overline X^\Delta(s),\bar\mu_s)-b(\overline X^\Delta(s),\bar\mu_s)\big){\rm d}s\Big|^2
		\le (T h^\alpha B_R^2)^2
		= T^2 B_R^4\, h^{2\alpha}.
		\]
		Taking expectation gives
		\begin{equation}\label{11}
		I_1 \le C_{R,1} h^{2\alpha}
		\quad\text{with}\quad
		C_{R,1} := T^2 B_R^4=T^2 \big[|b(0,\delta_0)| + 2\tilde L(1+R^\ell)R\big]^4.
		\end{equation}
		\noindent\textbf{Step 2: Estimate of $I_2$.}
		By Assumption \textbf{(A3)}, for $s\le \tau_R$,
		\[
		|b(\overline X^\Delta(s),\bar\mu_s)-b(X(s),\mu_s)|
		\le L_b(R)\big(|\overline X^\Delta(s)-X(s)| + \mathcal W_2(\bar\mu_s,\mu_s)\big),
		\]
		where
		\[
		L_b(R) := \tilde L(1+2R^\ell).
		\]
		Then, using $\sup_{0\le t\le T}|\int_0^t f(s){\rm d}s|^2 \le T \int_0^T |f(s)|^2{\rm d}s$ and
		\(\mathcal W_2^2(\bar\mu_s,\mu_s)\le \mathbb E|\overline X^\Delta(s)-X(s)|^2\), we have
		\[
		\begin{aligned}
			I_2
			&= \mathbb E\Big[\sup_{0\le t\le T}\Big|\int_0^{t\wedge\tau_R}\big(b(\overline X^\Delta,\bar\mu)-b(X,\mu)\big){\rm d}s\Big|^2\Big] \\
			&\le T \mathbb E\int_0^{T\wedge\tau_R} |b(\overline X^\Delta(s),\bar\mu_s)-b(X(s),\mu_s)|^2 {\rm d}s \\
			&\le T \cdot 2 L_b(R)^2 \int_0^{T\wedge\tau_R} \mathbb E\Big[\,|\overline X^\Delta(s)-X(s)|^2 + \mathcal W_2^2(\bar\mu_s,\mu_s)\Big]{\rm d}s \\
			&\le T \cdot 2 L_b(R)^2 \int_0^{T\wedge\tau_R} \mathbb E\Big[\,|\overline X^\Delta(s)-X(s)|^2 + \mathbb E|\overline X^\Delta(s)-X(s)|^2\Big]{\rm d}s \\
			&\le 4 T L_b(R)^2 \int_0^{T\wedge\tau_R} \mathbb E|\overline X^\Delta(s)-X(s)|^2{\rm d}s.
		\end{aligned}
		\]
		Hence we can take
		\begin{equation}\label{22}
			C_{R,2} := 4 T \bigl(\tilde L(1+2R^\ell)\bigr)^2
		\quad\text{and}\quad
		I_2 \le C_{R,2} \int_0^{T\wedge\tau_R} \mathbb E|\overline X^\Delta(s)-X(s)|^2{\rm d}s.
		\end{equation}
		\noindent\textbf{Step 3: Estimate of $I_3$.}
		Let $g(s):=\sigma(\overline X^\Delta(s),\bar\mu_s)-\sigma(X(s),\mu_s)$. By the BDG inequality,
		\[
		\mathbb E\Big[\sup_{0\le t\le T}\Big|\int_0^{t\wedge\tau_R} g(s)\,{\rm d}W(s)\Big|^2\Big]
		\le C_{\mathrm{BDG}} \,\mathbb E\int_0^{T\wedge\tau_R} |g(s)|^2 {\rm d}s.
		\]
		By Assumption \textbf{(A7)},
		\[
		|g(s)|^2 \le \tilde L_1 |\overline X^\Delta(s)-X(s)|^2 + \tilde L_2 \mathcal W_2^2(\bar\mu_s,\mu_s)
		\le (\tilde L_1 + \tilde L_2)\, \mathbb E|\overline X^\Delta(s)-X(s)|^2.
		\]
		Hence,
		\[
		I_3 \le C_{\mathrm{BDG}}(\tilde L_1 + \tilde L_2) \int_0^{T\wedge\tau_R} \mathbb E|\overline X^\Delta(s)-X(s)|^2{\rm d}s.
		\]
		So we set
		\begin{equation}\label{33}
			C_{R,3} := C_{\mathrm{BDG}}(\tilde L_1 + \tilde L_2)
			\quad\text{and}\quad
			I_3 \le C_{R,3} \int_0^{T\wedge\tau_R} \mathbb E|\overline X^\Delta(s)-X(s)|^2{\rm d}s.
		\end{equation}
		\noindent\textbf{Step 4: Estimate of $I_4$.}
		For the jump part, let
		\[
		h(s,z) := \big(c(\overline X^\Delta(s),\bar\mu_s)-c(X(s),\mu_s)\big) z,
		\]
		and let $M_t := \int_0^t \int_{\mathbb R_0^d} h(s,z)\,\tilde N({\rm d}s,{\rm d}z)$. By Kunita’s inequality,
		\[
		I_4 = \mathbb E\Big[\sup_{0\le t\le T} |M_{t\wedge\tau_R}|^2\Big]
		\le C_{\mathrm{Ku}} \,\mathbb E\int_0^{T\wedge\tau_R} \int_{\mathbb R_0^d} |h(s,z)|^2 \nu({\rm d}z)\,{\rm d}s.
		\]
		Using assumptions \textbf{(A4)} and \textbf{(A7)},
		\begin{align*}
			\int_{\mathbb R_0^d} |h(s,z)|^2 \nu({\rm d}z)
			&= |c(\overline X^\Delta(s),\bar\mu_s)-c(X(s),\mu_s)|^2 \int_{\mathbb R_0^d} |z|^2 \nu({\rm d}z)\\
			&\le (\tilde L_1 + \tilde L_2)\big(|\overline X^\Delta(s)-X(s)|^2 + \mathcal W_2^2(\bar\mu_s,\mu_s)\big).
		\end{align*}
		Due to $\mathcal W_2^2(\bar\mu_s,\mu_s)\le \mathbb E|\overline X^\Delta(s)-X(s)|^2$, we have
		\[
		I_4 \le 2 C_{\mathrm{Ku}} (\tilde L_1 + \tilde L_2) \int_0^{T\wedge\tau_R} \mathbb E|\overline X^\Delta(s)-X(s)|^2{\rm d}s.
		\]
		Hence,
		\begin{equation}\label{44}
			C_{R,4} := 2 C_{\mathrm{Ku}} (\tilde L_1 + \tilde L_2)
			\quad\text{and}\quad
			I_4 \le C_{R,4} \int_0^{T\wedge\tau_R} \mathbb E|\overline X^\Delta(s)-X(s)|^2{\rm d}s.
		\end{equation}
		\noindent\textbf{Step 5: Estimate of $I_5$.}
		This is completely analogous to the estimate of \(I_3\). Thus, 
		\begin{equation}\label{55}
			I_5 \le C_{\mathrm{BDG}}(\tilde L_1 + \tilde L_2) \int_0^{T\wedge\tau_R} \mathbb E|\overline X^\Delta(s)-X(s)|^2{\rm d}s=:C_{R,5}\int_0^{T\wedge\tau_R} \mathbb E|\overline X^\Delta(s)-X(s)|^2{\rm d}s .
		\end{equation}
		Putting \eqref{11}, \eqref{22}, \eqref{33}, \eqref{44}, \eqref{55} back into \eqref{eR}, we obtain
		\begin{equation}\label{ww}
			\mathbb E\Big[\sup_{0\le t\le T} |e^R(t)|^2\Big]
			\le 5 C_{R,1} h^{2\alpha}
			+ 5\big( C_{R,2} + C_{R,3} + C_{R,4} + C_{R,5} \big)
			\int_0^{T\wedge\tau_R} \mathbb E|\overline X^\Delta(s)-X(s)|^2{\rm d}s.
		\end{equation}
		If we denote
		\begin{equation}\label{mmm}
			C_R^{(\ast)} := 5\big( C_{R,2} + C_{R,3} + C_{R,4} + C_{R,5} \big),
		\end{equation}
		then the estimate \eqref{ww} becomes
		\[\mathbb E\Big[\sup_{0\le t\le T} |e^R(t)|^2\Big]
		\le 5 C_{R,1} h^{2\alpha}
		+ C_R^{(\ast)}
		\int_0^{T\wedge\tau_R} \mathbb E|\overline X^\Delta(s)-X(s)|^2{\rm d}s.\]
To obtain the global error estimate for $e(t) = X^\Delta(t) - X(t)$, 
\begin{align*}
	\mathbb{E}\left[ \sup_{0 \le t \le T} |e(t)|^2 \right] 
	&\leq \mathbb{E}\left[ \sup_{0 \le t \le T} |e^R(t)|^2 \right] + \mathbb{E}\left[ \sup_{0 \le t \le T} |e(t)|^2 \cdot \mathbf{1}_{\{\tau_R \le T\}} \right].
\end{align*}
According to Proposition~\ref{prp:2.3} and Lemma~\ref{thm:TAEM_moment_bounds}, for $p \geq 2$ we have,
\begin{equation}\label{fff}
	\mathbb{E}\left[ \sup_{0 \le t \le T} |X(t)|^p \right] \le C_{T,p}, \quad \mathbb{E}\left[ \sup_{0 \le t \le T} |X^\Delta(t)|^p \right] \le C_{T,p}.
\end{equation}
Now, by using H\"older's inequality,
\begin{align*}
	\mathbb{E}\left[ \sup_{0 \le t \le T} |e(t)|^2 \cdot \mathbf{1}_{\{\tau_R \le T\}} \right] 
	&\le \left( \mathbb{E}\left[ \sup_{0 \le t \le T} |e(t)|^p \right] \right)^{2/p} \cdot \left( \mathbb{P}(\tau_R \le T) \right)^{1 - 2/p}.
\end{align*}
By the triangle inequality and \eqref{fff}, we obtain
\[
\mathbb{E}\left[ \sup_{0 \le t \le T} |e(t)|^p \right] \le 2^{p-1} \left( \mathbb{E}\left[ \sup_{0 \le t \le T} |X(t)|^p \right] + \mathbb{E}\left[ \sup_{0 \le t \le T} |X^\Delta(t)|^p \right] \right) \le 2^p C_{T,p}.
\]	
Now, by Markov's inequality,
\[
\mathbb{P}(\tau_R \le T) \le \mathbb{P}\left( \sup_{0 \le t \le T} |X(t)| \ge R \right) + \mathbb{P}\left( \sup_{0 \le t \le T} |X^\Delta(t)| \ge R \right) \le \frac{2C_{T,p}}{R^p}.
\]
Thus,
\begin{equation}\label{77}
	\mathbb{E}\left[ \sup_{0 \le t \le T} |e(t)|^2 \cdot \mathbf{1}_{\{\tau_R \le T\}} \right] \le (2^p C_{T,p})^{2/p} \cdot \left( \frac{2C_{T,p}}{R^p} \right)^{1 - 2/p} \le C_{T,p} R^{2-p},
\end{equation}
combining with Lemma~\ref{final} we have
\begin{equation}\label{x}
	\begin{aligned}
		\mathbb{E}\left[ \sup_{0 \le t \le T} |e(t)|^2 \right]
		&\le 5 C_{R,1} h^{2\alpha}
		+ C_R^{(\ast)}
		\int_0^{T\wedge\tau_R} \mathbb E|\overline X^\Delta(s)-X(s)|^2{\rm d}s + \mathbb{E}\left[ \sup_{0 \le t \le T} |e(t)|^2 \cdot \mathbf{1}_{\{\tau_R \le T\}} \right]\\
		&\le \Big( 5 C_{R,1} h^{2\alpha}
		+ 8C_R^{(\ast)}T\big(B_R^2 + 3K_R\big)\,h \Big)\, e^{2C_R^{(\ast)}\,T}
		\ +\, C_{T,p} R^{2-p}\\
		&= \Big( 5 T^2 \big[|b(0,\delta_0)| + 2\tilde L(1+R^\ell)R\big]^4 h^{2\alpha}
		+ 8C_R^{(\ast)}T\big(B_R^2 + 3K_R\big)\,h \Big)\, e^{2C_R^{(\ast)}\,T}
		\ +\, C_{T,p} R^{2-p}\\
		&=: (g_1+g_2)\,e^{2C_R^{(\ast)}T} \;+\; g_3.
	\end{aligned}
\end{equation}
Due to \eqref{oo} and \eqref{mmm}, we have
\begin{equation}\label{vvvv}
	C_R^{(\ast)} \le \bar C_1 \bigl(1 + R^{2\ell}\bigr),
	\qquad
	B_R^2 + 3K_R \le \bar C_2 \bigl(1 + R^{2\ell+2}\bigr),
\end{equation}
for some constants $\bar C_1,\bar C_2>0$ independent of $h$.  
Let

\begin{equation} \label{eq:R_choice}
	R = \left( \frac{\varepsilon}{4\bar{C}_1 T}\,\log\!\frac{1}{h} \right)^{\!1/(2\ell)}, \qquad 0<\varepsilon<1. 
\end{equation}
Hence, we have
\begin{equation}\label{ff}
	e^{2 C_R^{(\ast)} T}
	\le e^{\,2 \bar{C}_1 T \,(1 + R^{2\ell})}
	= e^{2 \bar{C}_1 T}\,\exp\!\Big(2 \bar{C}_1 T\cdot \frac{\varepsilon}{4\bar{C}_1 T}\,\log\!\tfrac{1}{h}\Big)
	= e^{2 \bar{C}_1 T}\,h^{-\varepsilon/2}.
\end{equation}
We now bound each term in \eqref{x} under the choice \eqref{eq:R_choice}:

\noindent\textbf{Step 1: Estimate of $g_1$.}
Since \eqref{eq:R_choice}, we have
\begin{equation}\label{zz}
	5 T^2 \left[ |b(0,\delta_0)| + 2\tilde{L}(1 + R^\ell) R \right]^4 h^{2\alpha}
	\le C_1\, h^{2\alpha}\, \bigl(\log(1/h)\bigr)^{\frac{2(\ell+1)}{\ell}} .
\end{equation}

\noindent\textbf{Step 2: Estimate of $g_2$.}
Due to \eqref{vvvv}, we get
\[
C_R^{(\ast)}(B_R^2 + 3K_R)
\le \bar{C}_1 (1 + R^{2\ell}) \cdot \bar{C}_2 (1 + R^{2\ell+2})
= \bar{C}_1 \bar{C}_2 \bigl(1 + R^{2\ell} + R^{2\ell+2} + R^{4\ell+2} \bigr).
\]
Hence,
\begin{equation}\label{xx}
	C_R^{(\ast)}(B_R^2 + 3K_R)
	\le C_2\, \bigl(\log(1/h)\bigr)^{\frac{4\ell + 2}{2\ell}} .
\end{equation}
Therefore, since in \eqref{x} we have $g_2= 8\,C_R^{(\ast)}T\,(B_R^2+3K_R)\,h$, combining with \eqref{xx} yields
	\(
	g_2 \le 8T\,C_2\, h\, (\log(1/h))^{\frac{4\ell+2}{2\ell}}.
	\)

\noindent\textbf{Step 3: Estimate of $g_3$.}
\begin{equation}\label{dd}
	C_{T,p} R^{2 - p}
	= C_{T,p}\, \bigl(\log(1/h)\bigr)^{\frac{2 - p}{2\ell}} .
\end{equation}
Substituing \eqref{ff}, \eqref{zz}, \eqref{xx}, \eqref{dd} into \eqref{x}, we obtain
\begin{align*}
	\mathbb{E} \left[ \sup_{0 \le t \le T} |e(t)|^2 \right]
	&\le \Big[ C_1\, h^{2\alpha} \bigl(\log(1/h)\bigr)^{\frac{2(\ell + 1)}{\ell}}
	\ +\, 8T\,C_2\, h\, \bigl(\log(1/h)\bigr)^{\frac{4\ell + 2}{2\ell}} \Big] \cdot h^{-\varepsilon/2} 
	\ +\, C_{T,p}\, \bigl(\log(1/h)\bigr)^{\frac{2 - p}{2\ell}}\\
	&= C_1\, h^{2\alpha-\frac{\varepsilon}{2}}\,\bigl(\log(1/h)\bigr)^{\frac{2(\ell + 1)}{\ell}}
	\ +\, 8T\,C_2\, h^{1-\frac{\varepsilon}{2}}\,\bigl(\log(1/h)\bigr)^{\frac{4\ell + 2}{2\ell}}
	\ +\, C_{T,p}\, \bigl(\log(1/h)\bigr)^{-\frac{p-2}{2\ell}}.
\end{align*}
Choosing \(\alpha\in\bigl(\tfrac12-\tfrac{\varepsilon}{4},\,\tfrac12\bigr)\) gives \(2\alpha-\tfrac{\varepsilon}{2}\ge 1-\varepsilon\). Hence,
\[
\mathbb{E} \left[ \sup_{0 \le t \le T} |e(t)|^2 \right]
\le C_{T,\varepsilon}\, h^{1-\varepsilon}\,(\log(1/h))^{C}
\ +\, C_{T,p}\, \bigl(\log(1/h)\bigr)^{-\frac{p-2}{2\ell}},
\]
for a constant \(C>0\) depending only on \(\ell\).
Consequently, the above bound guarantees strong convergence,
\[
\lim_{h\to 0}\mathbb{E} \left[ \sup_{0 \le t \le T} |e(t)|^2 \right] = 0.
\]

\end{proof}

\section{Numerical Examples}
In this section we present four conditional McKean-Vlasov SDE test problems with
common noise and jumps, and examine the performance of the TAEM scheme. All model
coefficients satisfy \textbf{(A1)-(A7)}. For a final time $T$ and a family of dyadic time grids
indexed by $l\in\{1,\ldots,6\}$ with time step $h_l=T/2^l$, we define the level-$l$ mean-squared error
(MSE) against the finest reference level $L_{\max}$ under a fully coupled driver construction:
\[
\mathrm{MSE}(l,T)
:= \frac{1}{M}\sum_{m=1}^{M} \big| X^{(l)}_m(T) - X^{(L_{\max})}_m(T)\big|^2,
\qquad
\beta := -\mathrm{Slope}\big(\, l \mapsto \log_2 \mathrm{MSE}(l,T) \,\big).
\]
Hence $\beta\approx 1$ corresponds to an almost first-order decay of the MSE with respect to $h$
(since $\mathrm{MSE}\approx O(h)$ implies a root mean-square error of order $O(h^{1/2})$).
We report the fitted $\beta$ for $T\in\{1,5,10\}$ with each model and show the aggregate plot
in Figure~\ref{fig:NumAll}.

\paragraph{Model 1: Ginzburg-Landau model with common noise.}
\[
{\rm d}X_t
= 0.1\bigl(X_t - X_t^{3} + m(\mu_t)\bigr)\,{\rm d}t
+ 0.1 X_t\,{\rm d}W_t
+ 0.05 X_t\,{\rm d}W_t^{0}
+ \sin(X_{t-})\,{\rm d}Z_t^{0},
\qquad X_0=1.0,
\]
where $\mu_t=\mathcal{L}^1(X_t)$ and $m(\mu_t)=\int_{\mathbb{R}} y\,\mu_t({\rm d}y)$.
Here $W$ denotes an idiosyncratic Brownian motion, $W^{0}$ a common Brownian motion,
and $Z^{0}$ a pure-jump L\'evy process (e.g., tempered-stable).
Due to Example~\ref{vv}, assumptions \textbf{(A1)-(A7)} are satisfied.

For Model~1, Figure~\ref{fig:NumAll} shows that the TAEM scheme exhibits an almost first-order decay
of the MSE at short horizon, with $\beta \approx 1.15$ for $T=1$. At $T=5$, we observe a
pre-asymptotic superlinear behaviour with $\beta \approx 1.78$. At $T=10$, the fitted rate turns
negative ($\beta \approx -0.48$) and the MSE increases upon refinement, reflecting long-horizon
instability driven by strong nonlinearity under common noise.

\paragraph{Model 2: LQ conditional mean-field control I model.}
\[
{\rm d}X_t = u(t)\,{\rm d}t + 0.15\,{\rm d}B_t^{0} 
+ 0.12\,\mathbb{E}[X_t \mid \mathcal{F}_t^{B^0}]\,{\rm d}W_t
+ \int_{\mathbb{R}} 0.08 \tanh(z)\,\tilde{N}({\rm d}t,{\rm d}z), 
\quad X_0=1.0,
\]
with control
\[
u(t) = -0.5\left(X_t + \mathbb{E}[X_t \mid \mathcal{F}_t^{B^0}]\right).
\]
Here $B^0$ denotes the common Brownian motion, $W$ an idiosyncratic Brownian motion, 
and $\tilde{N}$ a compensated Poisson random measure. 
The jump distribution is Gaussian with variance $\varsigma^2=0.25$, and the jump intensity is $\lambda=1.0$. 
Under these specifications, assumptions \textbf{(A1)-(A7)} are satisfied.

For Model~2, Figure~\ref{fig:NumAll} shows that we obtain 
$\beta\approx 1.01,\,1.59,\,1.91$ for $T=1,5,10$, respectively.
Accuracy remains robust and even improves within our tested range, reflecting that the control
effectively neutralizes both idiosyncratic and common fluctuations.

\paragraph{Model 3: LQ conditional mean-field control II model.}
\[
{\rm d}X_t = u(t)\,{\rm d}t 
+ \int_{\mathbb{R}} 0.18 \tanh(z)\,\tilde{N}_1({\rm d}t,{\rm d}z)
+ \int_{\mathbb{R}} 0.10 \tanh(z)\,\mathbb{E}[X_t \mid \mathcal{F}_t^{\tilde{N}_1}]\,\tilde{N}_2({\rm d}t,{\rm d}z), 
\quad X_0=1.0,
\]
with control
\[
u(t) = -0.4\left(X_t + \mathbb{E}[X_t \mid \mathcal{F}_t^{\tilde{N}_1}]\right).
\]
Here $\tilde{N}_1$ and $\tilde{N}_2$ are independent compensated Poisson random measures, 
representing idiosyncratic and common jumps, respectively. 
Both jump distributions are Gaussian with variance $0.16$, with intensities $\lambda_1=0.8$ and $\lambda_2=0.6$. 
By the Lipschitz and linear growth conditions, \textbf{(A1)-(A7)} are fulfilled. 

For Model~3, Figure~\ref{fig:NumAll} shows that the fitted rates are 
$\beta\approx 1.04,\,1.33,\,1.79$ for $T=1,5,10$, respectively,
again with slopes around or above $1$ and stable behaviour as $T$ grows.

\paragraph{Model 4: Interbank lending model with jumps.}
\[
{\rm d}X_t = 0.6\bigl(\bar{X}_t - X_t\bigr)\,{\rm d}t 
+ 0.25\,{\rm d}W_t + 0.18\,{\rm d}B_t^0 
+ \int_{\mathbb{R}} 0.12 \tanh(z)\,\tilde{N}({\rm d}t,{\rm d}z), 
\quad X_0=1.0,
\]
where $\bar{X}_t=\mathbb{E}[X_t\mid \mathcal{F}_t^{B^0}]$ denotes the conditional mean with respect to the common noise. 
This model captures mean-reversion toward the conditional average across banks, together with both idiosyncratic and common volatility. 
The jump distribution is Gaussian with variance $0.2025$, and the jump intensity is $\lambda=0.9$. 
Again, \textbf{(A1)-(A7)} are satisfied. 

For Model~4, Figure~\ref{fig:NumAll} shows that we obtain 
$\beta\approx 1.38,\,1.64,\,1.77$ for $T=1,5,10$, respectively,
with consistently large slopes and good large-$T$ behaviour; mean reversion stabilizes the
dynamics against the combined effect of common Brownian noise and Gaussian jumps.

\begin{figure}[H]
	\centering
	\includegraphics[width=0.9\textwidth]{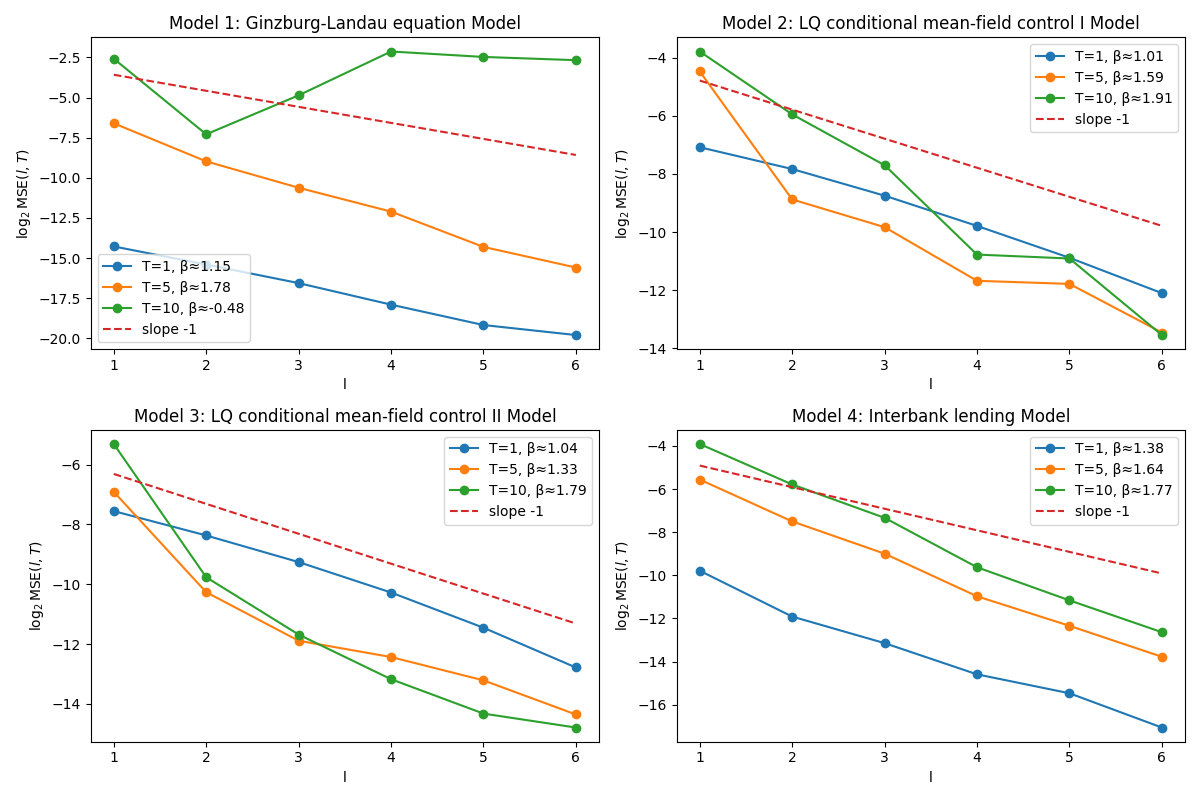}
	\caption{$\log_2\mathrm{MSE}(l,T)$ against $l$ for the four models ($T\!\in\!\{1,5,10\}$);
		dashed line: slope $-1$.}
	\label{fig:NumAll}
\end{figure}

\paragraph{Conclusion.}
In summary, the numerical experiments display an almost first-order decay of the MSE
in most cases (with fitted slopes $\beta \approx 1$), which complements the strong convergence
result in Theorem~\ref{cc}. Its long-horizon robustness hinges on the balance among
nonlinearity strength, common-noise coupling, and jump activity. Across the four models,
LQ-I/II and Interbank remain stable over extended horizons, while the Ginzburg-Landau model
shows a transient non-asymptotic behaviour only at very large $T$ due to strong nonlinearity.
Nevertheless, across practically relevant horizons all models demonstrate satisfactory
convergence behaviour and accuracy at the tested resolutions.

Moreover, the inclusion of common noise in all four models is essential. 
In practical financial and economic systems, individual agents are not only exposed to their own idiosyncratic shocks, 
but also to systemic sources of randomness such as market-wide fluctuations, macroeconomic policy shifts, or collective risk factors. 
This type of shared uncertainty governs the correlation structure among agents and strongly affects the evolution of the mean field 
as well as the effectiveness of control strategies. 
Neglecting common noise would lead to an overestimation of convergence and stability, and would fail to capture the mechanism of 
error accumulation over long horizons. 
Hence, incorporating common noise is crucial for accurately describing systemic risk propagation, 
assessing the robustness of numerical schemes, and ensuring that the models remain relevant for real-world applications.

	\section*{Acknowledgments}
	We would like to express our gratitude to all those who helped us with this research.
	Special thanks to Professor Jianhai Bao and Professor Jian Wang.

\end{document}